\definecolor{verylight}{gray}{0.97}
\definecolor{light}{gray}{0.9}
\definecolor{medium}{gray}{0.85}
\definecolor{dark}{gray}{0.6}
 \def\NZQ{\mathbb}               
 \def\NN{{\NZQ N}}
 \def\QQ{{\NZQ Q}}
 \def\ZZ{{\NZQ Z}}
 \def\frk{\mathfrak}               
 \def\mm{{\frk m}}
 \def\MR{{\mathcal R}}
 \def\G{{\mathcal G}}
 \def\ab{{\mathbf a}}
 \def\bb{{\mathbf b}}
 \def\xb{{\mathbf x}}
 \def\cb{{\mathbf c}}
 \def\opn#1#2{\def#1{\operatorname{#2}}} 
 \opn\chara{char} \opn\length{\ell} \opn\pd{pd} \opn\rk{rk}
 \opn\projdim{proj\,dim} \opn\injdim{inj\,dim} \opn\rank{rank}
 \opn\depth{depth} \opn\grade{grade} \opn\height{height}
 \opn\embdim{emb\,dim} \opn\codim{codim}
 \opn\Tr{Tr} \opn\bigrank{big\,rank}
 \opn\superheight{superheight}\opn\lcm{lcm}
 \opn\trdeg{tr\,deg}
 \opn\reg{reg} \opn\lreg{lreg} \opn\ini{in} \opn\lpd{lpd}
 \opn\size{size} \opn\sdepth{sdepth}
 \opn\link{link}\opn\fdepth{fdepth}\opn\lex{lex}
 \opn\tr{tr}
 \opn\type{type}
 \opn\div{div} \opn\Div{Div} \opn\cl{cl} \opn\Cl{Cl}
 \opn\Spec{Spec} \opn\Supp{Supp} \opn\supp{supp} \opn\Sing{Sing}
 \opn\Ass{Ass} \opn\Min{Min}\opn\Mon{Mon}
 \opn\Ann{Ann} \opn\Rad{Rad} \opn\Soc{Soc}
 \opn\Im{Im} \opn\Ker{Ker} \opn\Coker{Coker} \opn\Am{Am}
 \opn\Hom{Hom} \opn\Tor{Tor} \opn\Ext{Ext} \opn\End{End}
 \opn\Aut{Aut} \opn\id{id}
 \opn\nat{nat}
 \opn\pff{pf}
 \opn\Pf{Pf} \opn\GL{GL} \opn\SL{SL} \opn\mod{mod} \opn\ord{ord}
 \opn\Gin{Gin} \opn\Hilb{Hilb}\opn\sort{sort}
 \opn\PF{PF}\opn\Ap{Ap}
 \opn\aff{aff} \opn
\opn\relint{relint} \opn\st{st}
 \opn\lk{lk} \opn\cn{cn} \opn\core{core} \opn\vol{vol}  \opn\inp{inp} \opn\nilpot{nilpot}
 \opn\link{link} \opn\star{star}\opn\lex{lex}\opn\set{set}
 \opn\width{wd}
 \opn\Fr{F}
 \opn\QF{QF}
 \opn\G{G}
 \opn\type{type}\opn\res{res}
 \opn\gr{gr}
 \def\pot#1#2{#1[\kern-0.28ex[#2]\kern-0.28ex]}
 \opn\dirlim{\underrightarrow{\lim}}
 \opn\inivlim{\underleftarrow{\lim}}
 \let\union=\cup
 \let\sect=\cap
 \let\iso=\cong
 \let\Sect=\bigcap
 \let\Dirsum=\bigoplus
 \let\to=\rightarrow
 \def\Implies{\ifmmode\Longrightarrow \else
         \unskip${}\Longrightarrow{}$\ignorespaces\fi}
 \def\implies{\ifmmode\Rightarrow \else
         \unskip${}\Rightarrow{}$\ignorespaces\fi}
 \def\iff{\ifmmode\Longleftrightarrow \else
         \unskip${}\Longleftrightarrow{}$\ignorespaces\fi}
 \newtheorem{Theorem}{Theorem}[section]
 \newtheorem{Lemma}[Theorem]{Lemma}
 \newtheorem{Corollary}[Theorem]{Corollary}
 \newtheorem{Proposition}[Theorem]{Proposition}
 \newtheorem{Remark}[Theorem]{Remark}
 \newtheorem{Example}[Theorem]{Example}
 \newtheorem{Examples}[Theorem]{Examples}
 \newtheorem{Definition}[Theorem]{Definition}
 \let\epsilon\varepsilon
 \let\kappa=\varkappa
 \def\qed{\ifhmode\textqed\fi
       \ifmmode\ifinner\quad\qedsymbol\else\dispqed\fi\fi}
 \def\textqed{\unskip\nobreak\penalty50
        \hskip2em\hbox{}\nobreak\hfil\qedsymbol
        \parfillskip=0pt \finalhyphendemerits=0}
 \def\dispqed{\rlap{\qquad\qedsymbol}}
 \opn\dis{dis}
 \def\pnt{{\raise0.5mm\hbox{\large\bf.}}}
 \opn\Lex{Lex}
\begin{document}
\title {On the number of generators of powers of an ideal}

\author {J\"urgen Herzog,  Maryam Mohammadi Saem and Naser Zamani}

\address{J\"urgen Herzog, Fachbereich Mathematik, Universit\"at Duisburg-Essen, Campus Essen, 45117
Essen, Germany} \email{juergen.herzog@uni-essen.de}

\address{Maryam Mohammadi Saem,  Faculty of Science, University of Mohaghegh Ardabili, P.O. Box 179, Ardabil, Iran}
\email{m.mohammadisaem@yahoo.com}

\address{Naser Zamani, Faculty of Science, University of Mohaghegh Ardabili, P.O. Box 179, Ardabil, Iran}
\email{naserzaka@yahoo.com}

\dedicatory{ }

\begin{abstract}
We study the number of generators of ideals in regular rings and ask the question whether $\mu(I)<\mu(I^2)$ if $I$ is not a principal ideal, where  $\mu(J)$
denotes the number of generators of an ideal $J$. We provide lower bounds for the number of generators for the powers of an ideal and also show that the CM-type
of $I^2$ is $\geq 3$ if $I$ is a monomial ideal of height $n$ in $K[x_1,\ldots,x_n]$ and $n\geq 3$.
\end{abstract}

\thanks{}

\subjclass[2010]{Primary 13C99; Secondary 13H05, 13H10.}


\keywords{Monomial ideals, powers of ideals, number of generators, Cohen-Macaulay type}

\maketitle

\setcounter{tocdepth}{1}

\section*{Introduction}

Let $K$ be a field,  $S=K[x_1,\ldots,x_n]$  the polynomial ring in $n$ variables over $K$. We denote by $\mu(I)$ the number of generators of a graded ideal
$I\subset S$. It is well known (see  \cite{Ko}) that for a graded ideal $I$,   the function $f(k)= \mu(I^k)$, for $k\gg 0$,  is a polynomial in $k$  of degree
$\ell(I)-1$, where $\ell(I)$ denotes the analytic spread of $I$, i.e., the Krull dimension of the fiber ring $F(I)$ of $I$. This fact implies in particular, that
if $I$ is not a principal ideal, then $\mu(I^k)<\mu(I^{k+1})$ for  $k\gg 0$. But what is the behavior of $\mu(I^k)$  for small integers $k$? Is it true that
$\mu(I^k)<\mu(I^{k+1})$  for all $k$, when $I$ is not a principal ideal?
This question has a positive answer if all generators of $I$ are of the same degree, that is, if the ideal is equigenerated. The proof of this result is easy and
relies on the fact that in the given situation, the fiber ring of the ideal is a domain. This will be discussed in Section~\ref{one}.

The problem is much harder
if not all generators are of same degree,  and it can be phrased as well for an ideal in a regular local ring. It turns out that if $I$ is not equigenerated, then  $\mu(I)-\mu(I^2)$ may be as big as we want,  as the following families of ideals ,  communicated to us by Conca, show: take $I=(x^4,x^3y, xy^3, y^4)+(x^2y^2)J$ with $J=(z,t)^a$, for example. Then $I\subset K[x,y,z,t]$, $I^2=(x,y)^8$, and hence $\mu(I^2)=9$, while $\mu(I)=5+a$. In a preliminary version of the paper we conjectured that $n\geq 2$, and let $I\subset K[x_1,\ldots,x_n]$ be a  graded ideal of height $n$ or  an ideal in a regular local ring $R$ of dimension $\geq 2$ with $\height I=\dim R$. Then $\mu(I^k)<\mu(I^{k+1})$ for all integers $k\geq 0$. In the meantime,  together  with Eliahou,   we found for  any integer $m\geq 6$ an ideal  of height 2 in $K[x,y]$ such that $\mu(I)=m$ and $\mu(I^2)=9$.  The construction of such ideals will appear in a forthcoming paper.

One may even ask under which conditions $\mu(I\sect J)\leq \mu(IJ)$, which would then  in particular imply that $\mu(I)\leq \mu(I^2)$. It is shown in Corollary~\ref{yes} that for equigenerated monomial ideals of height $2$ in $K[x,y]$ one even has $\mu(I\sect J)< \mu(IJ)$.

In Section~\ref{one} we observe the simple fact that if $\height I\geq 2$ and $\depth F(I)>0$, then  $\mu(I^k)<\mu(I^{k+1})$  for all $k$. From this it  can be
deduced that if $I$ is equigenerated, then  $\mu(I^k)\leq \mu(I^{k+1})$ for all $k$, and that if $\mu(I^k)=\mu(I^{k+1})$  for some $k$, then $I$ is a principal
ideal, see Corollary~\ref{iff}.  Unfortunately, $\depth F(I)$ may be zero, if $I$ is not equigenerated. For example, the fiber ring of $I=(x^8,x^7y^2, x^6y^5,
x^2y^7,y^{10})$ has depth zero. On the other hand, Tony Puthenpurakal communicated to us that if $I\subset K[x,y]$ is an integrally closed graded ideal of height 2, then $F(I)$ is Cohen-Macaulay. In addition, Conca, De Negri  and Rossi give in \cite[Theorem 5.1]{CNR} a complete characterization  of contracted ideals whose associated graded ring and hence its fiber cone is Cohen-Macaulay. Thus these are  interesting cases when $\mu(I^k)<\mu(I^{k+1})$  for all $k$.

In Theorem~\ref{shalom}, it is shown that $\mu(IJ)\geq \mu(I)+\mu(J)-1$ for equigenerated graded ideals $I,J$ in the polynomial ring.  This yields   the
inequalities  $\mu(I^k)\geq k(\mu(I)-1)+1$ for all $k$, if $I$ is an equigenerated graded ideal in the polynomial ring. In particular, one has for such ideals
that $\mu(I^2)\geq 2\mu(I)-1$. This lower bound can be improved if one takes under consideration the number of variables of the polynomial ring. In the special
case that $I$  is an equigenerated  monomial ideal in $K[x_1,\ldots,x_n]$,  our Theorem~\ref{freiman} says that $\mu(I^2)\geq  \ell(I)\mu(I)-{\ell(I) \choose
2}$.   The proof of the theorem is based on a well-known theorem from additive number theory, due to Freiman \cite[Lemma~1.14]{Fr}.

Of course,  the  function $f(k)=\mu(I^k)$ is just the Hilbert function of the fiber ring  of $I$, which is known to be a rational function of the form
$Q(t)/(1-t)^{\ell(I)}$,  where $Q(t)=\sum_{i\geq 0}h_it^i$ is a polynomial. By  using these facts it can be easily seen that $\mu(I^2)\geq 2\mu(I)-1$,  if and
only if $h_2\geq 0$, and that  $\mu(I^k)\geq k(\mu(I)-1)+1$ for all $k\geq 1$, if $h_i\geq 0$ for all $i$, see Proposition~\ref{compare}. We also observe in
Proposition~\ref{difference} that for a proper graded ideal in the polynomial ring one always has that $\mu(I^2)=\ell(I)\mu(I)-{\ell(I)\choose 2}+h_2$. Thus a
comparison with Theorem~\ref{freiman} yields the interesting fact that $h_2\geq 0$ for any equigenerated monomial ideal, see Corollary~\ref{h2}.

Due to the fact that  $\mu(I^2)\geq 2\mu(I)-1$,  if $I$ is an equigenerated ideal, one may expect that this inequality is true   for any
ideal of maximal height with no restrictions on the degrees of the generators. However this is not the case. The following example was communicated to us by Eliahou: Let $I=(x^6,
x^5y^2, x^4y^3,x^2y^4,y^6)$. Then $\mu(I^2)=8$, while $2\mu(I)-1=9$.

It is of interest to find out when equality holds in the inequality given in Theorem~\ref{shalom}, that is, when  $\mu(IJ)= \mu(I)+\mu(J)-1$. In Section~\ref{three}, we give a complete answer to this
question, when $I$ and $J$ are monomial ideals in $K[x,y]$. It turns out that $\mu(IJ)=\mu(I)+\mu(J)-1$, if and only if $I=(x^a,y^a)^r$ and $J=(x^a,y^a)^s$ where
$a$, $r$ and $s$ are positive integers, see Theorem~\ref{new}. As a consequence of this theorem, we obtain in Corollary~\ref{true} the result that
$\mu(I^k)=k(m-1)+1$ for some $k\geq 2$ if and only if, $I=(x^a,y^a)^r$ for positive integers $a$ and $r$. It can also be seen that if $I$ is generated in degree
$d$, then $\mu(I^k)=k(m-1)+1$ for some $k\geq 2$, if and only if the reduction number of $I$ is one with respect to the ideal $J=(x^d,y^d)$, see
Corollary~\ref{truered}.

In Section~\ref{four} we study monomial ideals in $K[x,y]$ which are not necessarily equigenerated ideals. If $G(I)=\{x^{a_i}y^{b_i}\}_{i=1,\dots,m}$, one may
assume that $a_1>a_2>\ldots>a_m\geq 0$. Then $0\leq b_1<b_2<\ldots<b_m$. Conversely, any two sequences like this determine the minimal set of generators of a
monomial ideal of $K[x,y]$. Therefore, monomial ideals in $K[x,y]$ are in bijection to such sequences $\bf{a}$ and $\bf{b}$. In Proposition~\ref{convex} we show
that $\mu(I^2)=2\mu(I)-1$ if either the sequences $\bf{a}$ and $\bf{b}$ are both convex or both concave, and in Corollary~\ref{Crash} we show that any lexsegment
ideal $I$ in $K[x,y]$ satisfies $\mu(I^2)=2\mu(I)-1$. We also remark that corresponding statements do not hold for more than two variables.

For $G(I)=\{u_1,\ldots,u_m\}$ and any integers with $1\leq i\leq j\leq m$ we define the safe area of the monomial $u_iu_j$ as a set $S_{ij}$ of monomials
$u_ku_{\ell}\in I^2$ with $ k<i$ and $l\leq j$, or $k=i$ and $l<j$, or $k>i$ and $j\leq l$. The monomials $u_ku_{\ell}\in S_{ij}$ ($k\leq \ell$) have  the
property that $u_iu_j$ does not divide $u_ku_\ell$ and $u_ku_{\ell}$ does not divide $u_iu_j$. This concept turned out to be useful in the proof of
Proposition~\ref{convex} and Corollary~\ref{Crash}. It is also used to prove that $\mu(I^2)>\mu(I)$ for any non-principal monomial ideal in $K[x,y]$ with
$\mu(I)\leq 7$.

In the final section of this paper we discuss a special case of a question due to Huneke \cite{Hu}. An affirmative answer to  Huneke's question would imply the
following result: Let $I=JL$ be a product of two monomial ideals in $S=K[x_1,\ldots,x_n]$, and suppose that $\height I=n$, then $\type(S/I)\geq n$. Huneke's
theorem \cite[Theorem 2]{Hu}  implies that in this situation $S/I$ is not Gorenstein, in other words that $\type(S/I)\geq 2$. Here,  in Theorem~\ref{type},  we succeed to show that
$\type(S/I)\geq 3$.

\section{On the growth of the number of generators of a graded ideal in a polynomial ring}\label{one}

Let $I$ be a graded ideal in $S=K[x_1,\ldots,x_n]$. We denote by $\MR(I)=\Dirsum_{k\geq 0}I^kt^k\subset S[t]$ the Rees ring of $I$, and by $\mm=(x_1,\ldots,x_n)$
the graded maximal ideal of $S$. Then
$F(I)=\MR(I)/\mm \MR(I)= \Dirsum_{k\geq 0}(I^k/\mm I^k)t^k$ is called the {\it{fiber ring}} of $I$. Obviously, $F(I)$ is a standard graded $K$-algebra whose
Krull dimension is by definition the {\em analytic spread} $\ell(I)$ of $I$. The $k$th graded component of $F(I)$ is $I^k/\mm I^k$. Hence for the  Hilbert
function $\Hilb(k,F(I))$ of $F(I)$ we have $\Hilb(k,F(I))=\mu(I^k)$. This explains the behavior of the function $f(k)=\mu(I^k)$ for large $k$.

\begin{Proposition}
\label{strict}
Let $I\subset S$ be a  graded ideal.   If $\depth F(I)>0$, then $\mu(I^k)\leq \mu(I^{k+1})$ for all $k$, and if in addition $\height I\geq 2$, then $\mu(I^k)<
\mu(I^{k+1})$ for all $k$.
\end{Proposition}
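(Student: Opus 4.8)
The key is to interpret everything in terms of the fiber ring $F=F(I)$, which is a standard graded $K$-algebra with $\Hilb(k,F)=\mu(I^k)$. The claim that $\mu(I^k)\le\mu(I^{k+1})$ for all $k$ will follow from the existence of a homogeneous element of degree $1$ in $F$ that is a nonzerodivisor; multiplication by such an element embeds the degree-$k$ component into the degree-$(k+1)$ component.

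\smallskip

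\textbf{Step 1: Reduce to finding a nonzerodivisor of degree $1$.} Suppose $z\in F_1$ is a nonzerodivisor on $F$. Then for each $k$ the multiplication map $F_k\xrightarrow{\,\cdot z\,}F_{k+1}$ is injective, so $\mu(I^k)=\dim_K F_k\le\dim_K F_{k+1}=\mu(I^{k+1})$. Thus the first assertion reduces to producing such a $z$.

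\smallskip

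\textbf{Step 2: Produce the nonzerodivisor.} Since $\depth F>0$, the graded maximal ideal $\mathfrak{m}_F=\Dirsum_{k\ge 1}F_k$ is not an associated prime of $F$, i.e. $\mathfrak m_F\not\subset\bigcup_{\pp\in\Ass F}\pp$. As usual one wants a degree-$1$ element avoiding all associated primes. The standard tool here is prime avoidance in the graded setting: if $K$ is infinite, a general $K$-linear combination of the generators of $F_1$ lies outside every $\pp\in\Ass F$ that does not contain $F_1$, and any $\pp$ that \emph{does} contain $F_1$ would contain $\mathfrak m_F$, contradicting $\depth F>0$; hence some $z\in F_1$ is a nonzerodivisor. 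If $K$ is finite one passes to $K(t)$ (or an infinite extension), which changes neither $\depth$ nor the Hilbert function, applies the argument there, and descends. This gives the nonzerodivisor $z\in F_1$, completing the proof of the first statement.

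\smallskip

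\textbf{Step 3: Strictness when $\height I\ge 2$.} Now assume in addition $\height I\ge 2$; we must show $\dim_K F_k<\dim_K F_{k+1}$, i.e. that $\cdot z\colon F_k\to F_{k+1}$ is not surjective, equivalently $F/zF\neq 0$ in degree $k+1$ for every $k$. Equivalently $\dim F/zF\ge 1$, i.e. $\ell(I)=\dim F\ge 2$. So it suffices to show $\height I\ge 2$ forces $\ell(I)\ge 2$. This is where I'd use that $F(I)$ is the special fiber: $\ell(I)\ge\height I$ is a standard fact (the analytic spread is at least the height of the ideal — e.g. because a minimal reduction of $I$ is generated by $\ell(I)$ elements and has the same radical as $I$, so $\ell(I)\ge\height I$). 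Hence $\dim F\ge 2$, so $F/zF$ has dimension $\ge 1$, its Hilbert function is eventually positive, and since $F/zF$ is a standard graded algebra generated in degree $1$ with $(F/zF)_0=K\ne 0$, once it is nonzero in some degree it is nonzero in all lower degrees too; combined with dimension $\ge 1$ one gets $(F/zF)_{k+1}\ne 0$ for all $k\ge 0$. Therefore $\cdot z$ is never surjective and $\mu(I^k)<\mu(I^{k+1})$ for all $k$.

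\smallskip

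\textbf{Main obstacle.} The only delicate point is making Step 2 fully rigorous over an arbitrary field: the passage to an infinite base field and checking that neither $\depth F(I)$ nor the Hilbert function is affected, and that a nonzerodivisor descends. This is routine but needs to be stated; everything else ($\height I\le\ell(I)=\dim F$, injectivity/surjectivity of multiplication by a linear nonzerodivisor, and the persistence of nonvanishing of the Hilbert function of a standard graded algebra of positive dimension) is standard commutative algebra.
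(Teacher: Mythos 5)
Your proof is correct, and Steps 1--2 are exactly the paper's argument: reduce to $K$ infinite, use $\depth F(I)>0$ and graded prime avoidance to get a linear nonzerodivisor $z$ on $F(I)$, and conclude $\mu(I^k)\leq\mu(I^{k+1})$ from injectivity of multiplication by $z$. Where you diverge is the strictness step. The paper argues in one line: if $\mu(I^k)=\mu(I^{k+1})$, then multiplication by $z$ is also surjective in that degree, which (after lifting $z$ to an element $x\in I$ and applying graded Nakayama) gives $I^{k+1}=xI^k\subseteq(x)$, hence $\height I=\height I^{k+1}\leq 1$, contradicting $\height I\geq 2$. You instead invoke the standard inequality $\height I\leq\ell(I)=\dim F(I)$, so $\dim F(I)/zF(I)=\ell(I)-1\geq 1$, and then use that a standard graded algebra of positive dimension is nonzero in every degree to conclude $(F/zF)_{k+1}\neq 0$ for all $k$, i.e.\ multiplication by $z$ is never surjective. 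Both routes are valid; the paper's is more elementary in that it needs no facts about analytic spread or reductions, while yours isolates the cleaner structural statement (the obstruction to equality is exactly $\dim F/zF\geq 1$) at the modest cost of citing $\height I\leq\ell(I)$, which is legitimate once you have reduced to an infinite field. Your handling of the finite-field reduction is also fine, since depth, height, $\mu$, and the Hilbert function of the fiber ring are all preserved under the flat base change $K\to K(t)$.
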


\begin{proof}
Without loss of generality we may assume that $K$ is infinite. Then, since $\depth F(I)>0$,  there exists a linear form $x\in  F(I)$ which is a nonzerodivisor on
$F(I)$. Multiplication with $x$ induces an injective map $I^k/\mm I^k\to I^{k+1}/\mm I^{k+1}$ for all $k$. This implies that $\mu(I^k)\leq \mu(I^{k+1})$ for all
$k$. Suppose now that $\height I\geq 2$,  and suppose  $\mu(I^k)= \mu(I^{k+1})$ for some $k$. Then $I^{k+1}=xI^k$,  which implies that $\height I=1$, a
contradiction.
\end{proof}

\begin{Corollary}
\label{iff}
Let $I\subset S$ be an equigenerated graded ideal. Then
\begin{enumerate}
\item[(a)] $\mu(I^k)\leq \mu(I^{k+1})$ for all $k\geq 0$.
\item[(b)] The following conditions are equivalent:
\begin{enumerate}
\item[(i)] $I$ is a principal ideal;
\item[(ii)] $\mu(I^k)=\mu(I^{k+1})$ for all $k$;
\item[(iii)] $\mu(I^k)=\mu(I^{k+1})$ for some $k$.
\end{enumerate}
\end{enumerate}
\end{Corollary}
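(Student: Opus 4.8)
The plan is to derive everything from Proposition~\ref{strict} together with the fact that for an equigenerated ideal the fiber ring is a domain. We may assume $I\ne 0$ and $I\ne S$ (otherwise all assertions are trivial).

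Say $I$ is generated in degree $d$. The first step is the standard identification $F(I)\iso K[I_d]$, where $K[I_d]\subseteq S$ is the $K$-subalgebra generated by the vector space $I_d$, graded so that $I_d$ is its degree-$1$ component. Indeed $I^k$ is generated in degree $kd$, so $I^k/\mm I^k$ is concentrated in degree $kd$ and is naturally identified with $(I^k)_{kd}=(I_d)^k=K[I_d]_k$, compatibly with products. In particular $F(I)$ is a domain, being a subring of $S$; since $I\ne0$ it is not a field, so every nonzero element of its irrelevant maximal ideal is a nonzerodivisor, and in particular $\depth F(I)>0$. Part (a) is then immediate from Proposition~\ref{strict}.

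For part (b), the implication (i)$\Rightarrow$(ii) is clear since $\mu(I^k)=1$ for all $k$ when $I$ is principal, and (ii)$\Rightarrow$(iii) is trivial. For (iii)$\Rightarrow$(i), suppose $\mu(I^{k_0})=\mu(I^{k_0+1})$ for some $k_0$ and pick any $0\ne x\in F(I)_1$; as $F(I)$ is a domain this is a nonzerodivisor, so multiplication by $x$ is an injection $F(I)_{k_0}\hookrightarrow F(I)_{k_0+1}$ between $K$-vector spaces of equal dimension, whence $F(I)_{k_0+1}=x\,F(I)_{k_0}$. Since $F(I)$ is standard graded, an immediate induction gives $F(I)_{k}=x^{\,k-k_0}F(I)_{k_0}$ for all $k\ge k_0$, so the Hilbert function of $F(I)$ is eventually constant and $\dim F(I)=\ell(I)\le 1$.

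The crucial — and only delicate — point is that $\ell(I)\le 1$ forces $\mu(I)=1$. Here one must exploit that the generators of $I$ are forms in a polynomial ring, not merely that $F(I)$ is a one-dimensional standard graded domain: such domains can have embedding dimension $>1$ (for instance $\RR[x,y]/(x^2+y^2)$), but they do not arise as $K[I_d]$. The lemma to establish is: \emph{any two $K$-linearly independent forms of the same degree in $S$ are algebraically independent over $K$.} Granting it, if $\mu(I)\ge 2$ then two minimal generators $u_1,u_2$ are algebraically independent, so $K[u_1,u_2]\subseteq F(I)$ has Krull dimension $2$ and $\ell(I)\ge 2$, a contradiction; hence $\mu(I)=1$, i.e.\ $I$ is principal. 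To prove the lemma I would take a nonzero relation $P(u_1,u_2)=0$ with $P\in K[Y_1,Y_2]$; since forms of distinct degrees in $S$ are linearly independent, each homogeneous component of $P$ annihilates $(u_1,u_2)$ separately, so we may assume $P$ homogeneous of some degree $e\ge 1$. Factoring $P$ into linear forms over $\overline K$ and using that $\overline K[x_1,\dots,x_n]$ is a domain yields $\beta u_1=\alpha u_2$ for some $(\alpha:\beta)\in\PP^1(\overline K)$; since $u_1,u_2\ne 0$, comparing coefficients shows $u_1,u_2$ are proportional over $K$, contradicting linear independence. I expect this lemma to be the main obstacle, precisely because the naive reduction of a one-dimensional standard graded domain to $K[t]$ fails and must be replaced by an argument internal to the polynomial ring.
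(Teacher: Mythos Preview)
Your argument is correct, and for part (a) it coincides with the paper's: both identify $F(I)$ with the subalgebra $K[f_1,\dots,f_m]\subset S$, observe this is a domain, conclude $\depth F(I)>0$, and invoke Proposition~\ref{strict}.

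For (iii)$\Rightarrow$(i) in part (b), however, you take a genuinely different route. The paper argues by contradiction as follows: if $I$ is not principal, factor $I=fJ$ with $f$ a form and $\height J\ge 2$ (using that $S$ is a UFD); then $J$ is again equigenerated and $\mu(I^k)=\mu(J^k)$ for all $k$, so one may as well assume $\height I\ge 2$ from the start, whereupon the strict-inequality part of Proposition~\ref{strict} applies directly. Your approach instead deduces $\ell(I)\le 1$ from the equality of Hilbert values at $k_0$ and $k_0+1$, and then proves the lemma that two $K$-linearly independent forms of equal degree are algebraically independent (via factoring a homogeneous binary form over $\overline K$ and descending the resulting linear dependence back to $K$). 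The lemma is correct---your descent step is fine, since a $\overline K$-proportionality between two nonzero elements of $S$ forces $K$-proportionality by comparing a nonzero coefficient---and your conclusion $\dim K[u_1,u_2]\le\dim F(I)$ is justified because both are domains of finite type over $K$ and transcendence degree is monotone under field extensions. So your proof is complete; the paper's argument simply buys brevity by recycling Proposition~\ref{strict} via the height-$2$ reduction rather than establishing a separate algebraic-independence lemma.
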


\begin{proof}
Since $I$ is equigenerated, say $I=(f_1,\ldots,f_m)$ with $\deg f_i=d$ for all $i$,  it follows that $F(I)\iso K[f_1,\ldots,f_m]\subset S$. In particular,
$\depth F(I)>0$. Thus (a) follows from Proposition~\ref{strict}.

(b) (i)\implies (ii) \implies (iii) is obvious.
(iii)\implies (i): Assume that $I$ is not a principal ideal. If  $\height I=1$, then $I=fJ$ with $J\neq S$ an ideal with  $\height J\geq 2$. Since
$\mu(I^k)=\mu(J^k)$ for all $k$, we may assume that $I$ itself has height $\geq 2$. But then, by Proposition~\ref{strict},  $\mu(I^k)< \mu(I^{k+1})$,
contradicting our assumption.
\end{proof}

The following corollary was communicated to us by Tony Puthenpurakal.

\begin{Corollary}
\label{tony}
Let $I\subset K[x,y]$ be a  graded integrally closed ideal of height $2$.  Then $\mu(I^k)<\mu(I^{k+1})$ for all $k\geq 0$.
\end{Corollary}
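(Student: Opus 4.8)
The plan is to reduce the statement to Proposition~\ref{strict}. That proposition requires only $\depth F(I)>0$ together with $\height I\ge 2$, and the latter is part of the hypothesis, so it suffices to prove that $F(I)$ has positive depth; in fact I would prove the stronger assertion that $F(I)$ is Cohen--Macaulay, which is the remark of Puthenpurakal recorded in the introduction.

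First I would pin down the dimension. Since $I$ is a graded ideal of height $2$ in $S=K[x,y]$, the ring $S/I$ has dimension $0$, so $I$ is $\mm$-primary; and $2=\height I\le\ell(I)\le\dim S=2$ forces $\ell(I)=2$, that is, $\dim F(I)=2$. Hence once we know that $F(I)$ is Cohen--Macaulay we obtain $\depth F(I)=2>0$, and Proposition~\ref{strict} yields $\mu(I^k)<\mu(I^{k+1})$ for all $k\ge 0$, as claimed.

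To see that $F(I)$ is Cohen--Macaulay I would invoke Zariski's theory of complete ($=$ integrally closed) ideals in a two-dimensional regular local ring, in the sharpened form due to Lipman--Teissier and Huneke. Passing to the localization $S_\mm$ changes nothing, because each graded piece $I^k/\mm I^k$ is already a $K$-vector space, so $F(I)=F(IS_\mm)$; and $IS_\mm$ is an integrally closed, $\mm S_\mm$-primary ideal in the two-dimensional regular local ring $S_\mm$. For such an ideal one has $I^2=JI$ for a minimal reduction $J=(a,b)$, where $a,b$ is a regular sequence (reduction number at most one). Consequently $I^{n+1}=JI^n$ for all $n\ge 1$, so the images $\bar a,\bar b\in F(I)_1$ form a homogeneous system of parameters for the two-dimensional ring $F(I)$; the reduction-number-one property is precisely what makes $\bar a,\bar b$ a regular sequence on $F(I)$, and therefore $F(I)$ is Cohen--Macaulay.

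The hard part is this last ingredient: the fact that an integrally closed $\mm$-primary ideal in a two-dimensional regular local ring has reduction number at most one, and the passage from there to the regularity of $\bar a,\bar b$ on the fiber cone. This is also where the hypothesis ``integrally closed'' is indispensable---the ideal $I=(x^8,x^7y^2,x^6y^5,x^2y^7,y^{10})$ mentioned earlier has $\depth F(I)=0$---so no proof bypassing some such hypothesis can work. Everything else, namely the dimension count and the appeal to Proposition~\ref{strict}, is routine.
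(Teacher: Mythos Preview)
Your proposal is correct and follows essentially the same route as the paper: reduce to Proposition~\ref{strict} by showing $F(I)$ is Cohen--Macaulay, which in turn is obtained by passing to a two-dimensional regular local ring, invoking the Lipman--Teissier/Huneke theory of integrally closed ideals to get reduction number one ($I^2=JI$), and deducing Cohen--Macaulayness of the fiber cone. The only cosmetic differences are that the paper completes to $K\llbracket x,y\rrbracket$ rather than localizing at $\mm$, and it records the chain of implications via specific citations (gr$_I$ Cohen--Macaulay by \cite{LT,HS}, then $I^2=JI$ by \cite{HM}, then $F(I)$ Cohen--Macaulay by Shah \cite{Ki}) rather than arguing directly that $\bar a,\bar b$ is a regular sequence; your acknowledgement that this last passage is ``the hard part'' is apt, and Shah's theorem is exactly the reference that fills it in.
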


\begin{proof} By \cite[Lemma 9.1.1]{SH},  $IK\llbracket x,y\rrbracket$ is again integrally closed, and furthermore we have  $\mu(I^k)=\mu(I^kK\llbracket x,y\rrbracket)$ for all $k$. Thus we may
as well assume that $I$ is an integrally closed  height $2$ ideal in $K\llbracket x,y\rrbracket$. It follows that the associated graded ring $\gr_I(K\llbracket x,y\rrbracket)$ is Cohen--Macaulay,
see  \cite[Proposition 5.5]{LT}, \cite[Theorem 3.1]{HS}. By \cite[Theorem A, Proposition 2.6]{HM} this  implies that $I^2=JI$, where $J$ is a minimal reduction
of $I$. It follows  (\cite[Theorem 1]{Ki}) that $F(I)$ is Cohen--Macaulay. Hence the desired conclusion results from Proposition~\ref{strict}.

\end{proof}

Let $I$ and $J$ be graded ideals of height $\geq 2$. Is it true that $\mu(I\sect J)<\mu(IJ)$?  If, yes, then for  $I=J$ we  obtain  that $\mu(I)<\mu(I^2)$. We
will show in Section~\ref{three} that this question has a positive answer in some special cases. Here we  show

\begin{Proposition}\label{museum}
Let $I$ and $J$ be graded  ideals of height 2 in $K[x,y]$. Then $$\mu(I\sect J)\leq \mu(I)+\mu(J)-1.$$
\end{Proposition}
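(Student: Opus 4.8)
The plan is to exploit the very special structure of ideals in $K[x,y]$: every graded ideal $I$ of height $2$ has $S/I$ of finite length, hence $I$ is $\mm$-primary, and more importantly every such ideal has a minimal free resolution of length $1$ (by Hilbert--Burch / Auslander--Buchsbaum, since $\depth S/I = 0$ and $\dim S = 2$). Concretely, if $\mu(I)=p$ then $I$ has a resolution $0\to S^{p-1}\to S^p\to I\to 0$, so the relation module of $I$ is free of rank $p-1$. Thus $\mu(I)$ is governed by the first syzygies, and I want to turn the bound $\mu(I\sect J)\le \mu(I)+\mu(J)-1$ into a statement about syzygies or about Hilbert functions.

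The approach I would actually carry out is via Hilbert functions together with the exact sequence relating intersection and product. We have the standard short exact sequence
\begin{equation*}
0\To S/(I\sect J)\To S/I\dirsum S/J\To S/(I+J)\To 0,
\end{equation*}
and also $I\sect J\supset IJ$ with $(I\sect J)/IJ$ a module over $S/(I+J)$, which is of finite length since $\height(I+J)=2$. The first-syzygy count of a graded ideal $K$ with $S/K$ of finite length can be read off from its $h$-vector; in two variables the numerics are rigid. So I would: (i) express $\mu(I)$, $\mu(J)$, $\mu(I\sect J)$, $\mu(IJ)$ each as (number of generators) $=$ (length data of the cokernel) in a uniform way using the Hilbert--Burch structure; (ii) use the additivity of Hilbert functions in the displayed sequence to relate $\ell(S/(I\sect J))$ to $\ell(S/I)$, $\ell(S/J)$, $\ell(S/(I+J))$; and (iii) combine this with the inequality $\mu(I+J)\le \mu(I)+\mu(J)$ (clear, since generators of $I$ and $J$ together generate $I+J$) and the fact that $\mu(I\sect J)\le \mu(I+J)+\mu(IJ)-\text{(something)}$ coming from the second syzygies. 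An alternative, possibly cleaner, route: apply the ``safe area'' / direct combinatorial argument only in the monomial case and then reduce the general case to the monomial case by passing to initial ideals with respect to a generic term order, using that $\mu$ is an upper-semicontinuous invariant and $\mu(\ini(I)\sect\ini(J))\ge\mu(I\sect J)$, $\mu(\ini(I)\ini(J))=\mu(\ini(I))\mu(\ini(J))$ --- but one must check the in-equalities go the right way, which is delicate.

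I expect the main obstacle to be controlling $\mu(I\sect J)$ from above rather than from below: intersections of ideals can have many more generators than either factor, and the naive syzygy bound overshoots. The key technical point will be to show that the ``extra'' generators of $I\sect J$ beyond those forced by $I$ and $J$ separately are bounded by a term that exactly cancels the $+1$ we can afford to lose; equivalently, to show that the map $\Tor_1^S(S/I,K)\dirsum\Tor_1^S(S/J,K)\to\Tor_1^S(S/(I\sect J),K)$ has cokernel of dimension at most... well, this is where the two-variable rigidity must be invoked. Concretely I would analyse the mapping-cone resolution of $S/(I\sect J)$ built from the displayed short exact sequence: the resolution of $S/(I+J)$ has length $2$ (it is Cohen--Macaulay of codimension $2$), and splicing it in, the first syzygies of $S/(I\sect J)$ inject into $(\text{first syzygies of }S/I\oplus S/J)\oplus(\text{second syzygies of }S/(I+J))$, i.e. $\mu(I\sect J)-1\le (\mu(I)-1)+(\mu(J)-1)+(\mu(I+J)-1)$, which is too weak by $\mu(I+J)-1$. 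So the real work is to save that excess, presumably by showing the connecting maps in the mapping cone are as nondegenerate as the Hilbert--Burch structure permits, forcing a cancellation of $\mu(I+J)-1$ generators. Making this cancellation precise --- most likely by choosing a minimal reduction $(\ell_1,\ell_2)$ of $I+J$ and using it to pare down the resolution --- is the crux.
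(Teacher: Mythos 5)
Your plan stops exactly where the proof has to happen. After setting up the sequence $0\to S/(I\cap J)\to S/I\oplus S/J\to S/(I+J)\to 0$ and the mapping cone, you only reach a bound of the shape $\mu(I\cap J)\le \mu(I)+\mu(J)+\mu(I+J)-\mathrm{const}$, you acknowledge this overshoots by about $\mu(I+J)$, and you defer the needed cancellation to unspecified ``nondegeneracy of connecting maps'' and a minimal reduction of $I+J$ --- no mechanism is given, so nothing is proved. (The Gr\"obner alternative you sketch is also not sound as stated: $\ini(I\cap J)$ need not equal $\ini(I)\cap\ini(J)$, and $\mu(\ini(I)\ini(J))=\mu(\ini(I))\mu(\ini(J))$ is false in general.) The paper's proof avoids all of this with one extra classical ingredient that is missing from your plan: for a height-$2$ graded ideal $L\subset K[x,y]$ the Hilbert--Burch resolution $0\to S^{m-1}\to S^m\to L\to 0$ gives $\mu(L)=\type(S/L)+1$, and $\type(S/L)$ equals the minimal number of irreducible ideals $Q_i$ with $L=\bigcap_i Q_i$ (\cite[Satz 1.33]{HK}). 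Writing $I$ as an intersection of $\mu(I)-1$ irreducibles and $J$ as an intersection of $\mu(J)-1$ irreducibles, their common refinement is an irreducible decomposition of $I\cap J$ with at most $\mu(I)+\mu(J)-2$ components, so $\type(S/(I\cap J))\le\mu(I)+\mu(J)-2$ and hence $\mu(I\cap J)\le\mu(I)+\mu(J)-1$. The dual characterization of the type via irreducible decompositions is the idea your proposal lacks.

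For what it is worth, your homological route can be completed, but not by the devices you propose. The Tor sequence of your short exact sequence reads
\[
\Tor_2(S/(I{+}J),K)\to\Tor_1(S/(I\cap J),K)\to\Tor_1(S/I,K)\oplus\Tor_1(S/J,K)\to\Tor_1(S/(I{+}J),K)\to 0,
\]
where the right-hand surjectivity holds because the connecting map lands in $\Tor_0(S/(I\cap J),K)=K$ and the map $K\to K\oplus K$ is injective (equivalently: generators of $I$ and $J$ generate $I+J$). Since $I+J$ again has height $2$, Hilbert--Burch gives $\dim_K\Tor_2(S/(I{+}J),K)=\mu(I+J)-1$, and counting dimensions along the exact sequence yields $\mu(I\cap J)\le(\mu(I+J)-1)+\mu(I)+\mu(J)-\mu(I+J)=\mu(I)+\mu(J)-1$. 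So the excess $\mu(I+J)$ is cancelled by exactness at the $\Tor_1(S/(I+J),K)$ spot, not by a minimal reduction; without this observation (or the paper's type argument) your proposal does not close.
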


\begin{proof}
We first begin with a few general remarks. Suppose $L\subset S=K[x,y]$ is a graded ideal of height 2. Then its minimal graded free resolution is of the form
\[
0\to S^{m-1}\to S^m\to L\to 0.
\]
The rank of the last free module in the resolution is the Cohen-Macaulay type of $S/L$, see \cite[Proposition A.6.6.]{HH}. Thus we have that
$\mu(L)=\type(S/L)+1$. Furthermore $\type(S/L)$ is the smallest number $r$ such that $L=\sect_{i=1}^rQ_i$, where each $Q_i$ is an irreducible ideal, that is
where each $S/Q_i$ is Gorenstein, see \cite[Satz 1.33]{HK}.

Now suppose that $\mu(I)=r$ and $\mu(J)=s$. Then $I=\Sect_{i=1}^{r-1}Q_i$ with irreducible $Q_i$ and $J=\Sect_{j=1}^{s-1}Q_j'$ with irreducible ideals $Q_j'$. It
follows $I\sect J=\Sect_{i=1}^{r-1}Q_i\sect \Sect_{j=1}^{s-1}Q_j'$ which implies that
\[
\mu(I\sect J)=\type(I\sect J)+1\leq (r-1)+(s-1)+1=\mu(I)+\mu(J)-1.
\]
\end{proof}

We introduce some notation that will be used in the statements and the  proofs of the next theorems. We fix a monomial order $<$.

\begin{Definition}\label{prime}
{\em Let $L$ be a graded ideal with $L_d\neq 0$ and $L_i=0$ for $i<d$. We denote by $L'$ the ideal generated by the monomials of  degree $d$ in $\ini_<(L)$.}
\end{Definition}

\begin{Theorem}\label{shalom} Let $I,J\subset K[x_1,\ldots,x_n]$ be equigenerated graded ideals. Then
\[
\mu(IJ)\geq \mu(I)+\mu(J)-1.
\]
In particular,
$\mu(IJ)> \max\{\mu(I), \mu(J)\},$ if $I$ and $J$ are not principal.
\end{Theorem}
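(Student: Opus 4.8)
The plan is to reduce the general case to a statement about initial ideals and then use a counting argument on the corresponding sets of monomials. First I would pass from $I$ and $J$ to their leading-term ideals. Since $I$ and $J$ are equigenerated, say $I=(f_1,\ldots,f_r)$ with all $\deg f_i=d$ and $J=(g_1,\ldots,g_s)$ with all $\deg g_j=e$, the products $f_ig_j$ all have degree $d+e$, so $IJ$ is also equigenerated, and $\mu(IJ)=\dim_K (IJ)_{d+e}$. The key observation is that $\mu(L)=\dim_K L_c$ for any ideal $L$ equigenerated in degree $c$, and this dimension is unchanged when we pass to the initial ideal; moreover $\ini_<(I)_d\cdot\ini_<(J)_e\subseteq \ini_<(IJ)_{d+e}$. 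Using the ideals $I'$, $J'$ and $(IJ)'$ of Definition~\ref{prime} — the monomial ideals generated in the bottom degree of the respective initial ideals — we have $\mu(I)=\mu(I')$, $\mu(J)=\mu(J')$, $\mu(IJ)\geq\mu(I'J')$, so it suffices to prove the inequality for the monomial ideals $I'$ and $J'$, i.e. to show that for two sets $A$ and $B$ of monomials of fixed degrees, $|A\cdot B|\geq |A|+|B|-1$.

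Next I would prove the purely combinatorial statement: if $A$ and $B$ are finite nonempty sets of monomials in $x_1,\ldots,x_n$, then $|AB|\ge |A|+|B|-1$, where $AB=\{ab: a\in A,\ b\in B\}$. The clean way to see this is to fix the monomial order $<$ and list $A=\{a_1<a_2<\cdots<a_r\}$ and $B=\{b_1<b_2<\cdots<b_s\}$. Then the $r+s-1$ products
\[
a_1b_1<a_1b_2<\cdots<a_1b_s<a_2b_s<a_3b_s<\cdots<a_rb_s
\]
are pairwise distinct, because multiplying by a fixed monomial is strictly order-preserving: from $b_1<\cdots<b_s$ we get $a_1b_1<\cdots<a_1b_s$, and from $a_1<\cdots<a_r$ we get $a_1b_s<a_2b_s<\cdots<a_rb_s$, and the two chains overlap only in $a_1b_s$. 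Hence $|AB|\ge (s-1)+(r-1)+1 = r+s-1$. This is exactly a multiplicative, ordered-monoid version of the Cauchy--Davenport / sumset bound $|A+B|\ge |A|+|B|-1$ for subsets of $\ZZ^n$ (or $\NN^n$), transported through the exponent vectors.

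Putting the two halves together: taking $A$ to be the minimal monomial generators of $I'$ (all of degree $d$) and $B$ those of $J'$ (all of degree $e$), the set $AB$ consists of monomials of degree $d+e$ lying in $I'J'\subseteq (IJ)'$, and the elements of $AB$ are $K$-linearly independent in $\big((IJ)'\big)_{d+e}$, so $\mu(IJ)=\dim_K(IJ)_{d+e}\ge |AB|\ge |A|+|B|-1=\mu(I)+\mu(J)-1$. The ``in particular'' clause is immediate: if $I$ is not principal then $\mu(I)\ge 2$, so $\mu(IJ)\ge \mu(J)+(\mu(I)-1)\ge \mu(J)+1>\mu(J)$, and symmetrically $\mu(IJ)>\mu(I)$, hence $\mu(IJ)>\max\{\mu(I),\mu(J)\}$. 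The only delicate point — and the step I expect to need the most care — is justifying the passage to initial ideals: one must check that $I'$ is genuinely generated in a single degree with $\mu(I')=\mu(I)$ (which uses that $\dim_K I_d$ is a Macaulay-type invariant preserved by Gröbner degeneration), and that $\ini_<(I)\,\ini_<(J)\subseteq \ini_<(IJ)$ so that the degree-$(d+e)$ monomials produced actually live in $(IJ)'$; once that bookkeeping is in place, the combinatorial chain argument finishes the proof.
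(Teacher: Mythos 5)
Your proposal is correct and follows essentially the same route as the paper: reduce to the equigenerated monomial case via the ideals $I'$, $J'$, $(IJ)'$ and Macaulay's theorem together with $I'J'\subseteq (IJ)'$, then exhibit $\mu(I)+\mu(J)-1$ distinct products using a strictly increasing chain under the monomial order. The only difference is cosmetic: you use the chain $a_1b_1<\cdots<a_1b_s<a_2b_s<\cdots<a_rb_s$, while the paper uses a zigzag chain $u_1v_1<u_1v_2<u_2v_2<\cdots<u_rv_r<u_rv_{r+1}<\cdots<u_rv_s$, both of length $r+s-1$.
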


\begin{proof}
Let $I$ be generated in degree $d_1$ and $J$ generated in degree $d_2$. Then $IJ$ is generated in the degree $d_1+d_2$.
We fix a monomial order $<$.  It follows from Macaulay's theorem \cite[Corollary 4.2.4]{BH}, that $\mu(I')=\mu(I)$,   $\mu(J')=\mu(J)$ and $\mu((IJ)')=\mu(IJ)$.
Assume we can  prove the desired inequality for equigenerated monomial ideals. Then
\[
\mu(IJ)=\mu((IJ)')\geq \mu(I'J')\geq \mu(I')+\mu(J')-1=\mu(I)+\mu(J)-1.
\]
Here we used that $I'J'\subseteq (IJ)'$.

Thus it remains to prove the theorem when $I$ and $J$ are monomial ideals. Let $G(I)=\{u_1,\ldots,u_r\}$ and $G(J)=\{v_1,\ldots,v_s\}$. We may assume that $s\geq
r$,  and that $u_1<u_2<\ldots <u_r$ and $v_1<v_2<\ldots <v_s$. Then
\begin{eqnarray*}
&&u_1v_1<u_1v_2< u_2v_2< u_2v_3 <u_3v_3< \ldots <u_{r-1}v_{r-1}< u_{r-1}v_r <u_rv_r \\
&<&u_rv_{r+1} <u_rv_{r+2} <\ldots <u_rv_{s}.
\end{eqnarray*}
These elements are pairwise distinct and of degree $d_1+d_2$, and hence part of a minimal set of generators of $IJ$. Counting this set of elements we find that
they are $(2r-1)+(s-r)=r+s-1$ elements. This yields the desired conclusion. \end{proof}

For an interesting class of ideals, which are not necessarily equigenerated, equality holds in Theorem~\ref{shalom}. Indeed, let $R$ be a two dimensional polynomial ring over an infinite  field $K$ or a two dimensional regular local ring with infinite residue class field, and let $I\subset R$ be a proper ideal which we assume to be graded if $R$ is the polynomial ring. According to Zariski \cite{Za}, the ideal $I$ is   said to be contracted,  if there exists $x\in \mm\setminus  \mm^2$  such that $I = IS \sect R$, where $S = R[\mm/x]$.

We denote by $o(I)$ the order of $I$, that is, the  largest integer $k$ such that $I\subseteq \mm^k$. If $I$ is graded, then $o(I)$ is just the least degree of a generator of $I$.

\medskip
Contracted ideals have the following nice properties:
\begin{itemize}
\item[(i)] $\mu(I)=o(I)+1$;
\item[(ii)] if $I$ and $J$ are contracted, then $IJ$  is contracted.
\end{itemize}

The proof of (i) can be found in \cite[Proposition 2.3]{Hu1}, where it is shown that (i) actually  characterizes contracted ideals,  and in the same article the proof of  (ii) can be found, see \cite[Proposition 2.6]{Hu1}.

\medskip
By using (i) and (ii) and observing that $o(IJ)=o(I)+o(J)$  we immediately obtain

\begin{Proposition}
\label{huneke}
Let $R$ a polynomial ring over an infinite  field $K$ or two dimensional regular local with infinite residue class field, and let $I, J\subset R$ be  ideals of height $2$ which we assume to be graded if $R$ is the polynomial ring. Then $\mu(IJ)=\mu(I)+\mu(J)-1$.
\end{Proposition}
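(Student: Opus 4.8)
The plan is to combine the two stated properties of contracted ideals with the additivity of the order function. First I would recall from the cited result of Huneke--Sally (\cite[Proposition 2.3]{Hu1}) that for a contracted ideal $L$ one has $\mu(L)=o(L)+1$; since $R$ has infinite residue field (and we work with graded ideals when $R$ is a polynomial ring), every proper ideal of height $2$ is contracted in the sense defined above — there is a generic choice of $x\in\mm\setminus\mm^2$ that works. So both $I$ and $J$ are contracted, and by property (ii) the product $IJ$ is contracted as well.

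Next I would apply property (i) to all three ideals: $\mu(I)=o(I)+1$, $\mu(J)=o(J)+1$, and $\mu(IJ)=o(IJ)+1$. The only remaining point is the elementary observation that $o(IJ)=o(I)+o(J)$. The inequality $o(IJ)\geq o(I)+o(J)$ is immediate, since $IJ\subseteq\mm^{o(I)}\mm^{o(J)}=\mm^{o(I)+o(J)}$. For the reverse inequality one uses that $R$ is a regular local ring (or polynomial ring), so $\gr_\mm(R)$ is a polynomial ring and in particular a domain; hence if $u\in I$ has order exactly $o(I)$ and $v\in J$ has order exactly $o(J)$, the leading forms of $u$ and $v$ multiply to a nonzero form of degree $o(I)+o(J)$ in $\gr_\mm(R)$, which forces $o(uv)=o(I)+o(J)$, and therefore $o(IJ)\leq o(I)+o(J)$. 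In the graded case this is even more transparent, as $o$ is just the least degree of a generator and the product of the lowest-degree generators has the expected degree.

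Putting these together:
\[
\mu(IJ)=o(IJ)+1=o(I)+o(J)+1=(\mu(I)-1)+(\mu(J)-1)+1=\mu(I)+\mu(J)-1,
\]
which is the claim.

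The main thing to be careful about is the passage from "ideal of height $2$" to "contracted ideal": one must invoke the characterization that in a two-dimensional regular local ring with infinite residue field, an $\mm$-primary ideal (equivalently here, a proper ideal of height $2$) is always contracted for a suitably generic $x$, so that property (i) applies. Beyond that, every step is routine, and there is no real obstacle — the proposition is essentially a bookkeeping consequence of the two displayed properties of contracted ideals together with additivity of the order.
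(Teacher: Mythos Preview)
Your overall strategy---apply property (i) to $I$, $J$ and $IJ$, use property (ii) to know $IJ$ is contracted, and combine with $o(IJ)=o(I)+o(J)$---is exactly the argument the paper intends; the computation
\[
\mu(IJ)=o(IJ)+1=o(I)+o(J)+1=\mu(I)+\mu(J)-1
\]
is the whole proof.

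However, there is a genuine gap in your write-up: the assertion that ``every proper ideal of height $2$ is contracted'' is false. Contractedness is characterized precisely by the equation $\mu(I)=o(I)+1$ (this is the content of the cited \cite[Proposition~2.3]{Hu1}), and plenty of $\mm$-primary ideals violate it. For instance $I=(x^3,y^3)\subset K[x,y]$ has $o(I)=3$ but $\mu(I)=2$, so it is not contracted; and indeed taking $J=(x,y)^2$ one gets $IJ=(x,y)^5$ with $\mu(IJ)=6\neq 4=\mu(I)+\mu(J)-1$, so the proposition is simply not true without the contractedness hypothesis. There is no ``generic $x$'' argument that repairs this: for a non-contracted ideal, $IS\cap R$ is strictly larger than $I$ for every choice of $x\in\mm\setminus\mm^2$.

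What is really going on is that the proposition, read in context, is meant to be stated for \emph{contracted} ideals $I$ and $J$ (the paragraph immediately preceding it introduces contracted ideals and lists properties (i) and (ii), and the sentence leading into the proposition says it follows from those properties). The hypothesis was inadvertently dropped from the formal statement. With that hypothesis restored, your proof is correct and coincides with the paper's; without it, the statement is false and cannot be salvaged.
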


\begin{Corollary}\label{kill}
Let $I_1,\ldots,I_r$ be equigenerated graded ideals. Then
$$\mu(I_1\cdots I_r)\geq\sum_{j=1}^r\mu(I_j)-(r-1).$$
In particular, $\mu(I^k)\geq k(\mu(I)-1)+1$, for all $k$.

Moreover, if all the ideal $I_j$ are contracted ideals,  then equality holds for both inequalities.
\end{Corollary}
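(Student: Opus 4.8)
The plan is to prove the first inequality by induction on $r$, with Theorem~\ref{shalom} serving as the inductive step. The base case $r=1$ is the trivial inequality $\mu(I_1)\geq\mu(I_1)$. For $r\geq 2$ I would first record the elementary fact that a product of equigenerated ideals is again equigenerated: if $I_j$ is generated in degree $d_j$, then $I_1\cdots I_{r-1}$ is generated in degree $d_1+\cdots+d_{r-1}$, since its generators are, up to the ideal operation, products $u_1\cdots u_{r-1}$ with $u_i\in G(I_i)$, all of the same degree. Hence $I_1\cdots I_r=(I_1\cdots I_{r-1})\,I_r$ is a product of two equigenerated ideals, and Theorem~\ref{shalom} gives $\mu(I_1\cdots I_r)\geq\mu(I_1\cdots I_{r-1})+\mu(I_r)-1$. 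Combining this with the induction hypothesis $\mu(I_1\cdots I_{r-1})\geq\sum_{j=1}^{r-1}\mu(I_j)-(r-2)$ and simplifying yields $\mu(I_1\cdots I_r)\geq\sum_{j=1}^r\mu(I_j)-(r-1)$.

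For the ``in particular'' clause I would simply take $I_1=\cdots=I_r=I$ and $r=k$, so that $\sum_{j=1}^{k}\mu(I)-(k-1)=k\mu(I)-k+1=k(\mu(I)-1)+1$; the degenerate case $k=0$ is the identity $\mu(S)=1$, so the bound holds for all $k\geq 0$.

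For the equality statement, where the $I_j$ are contracted ideals — hence we are in the two-dimensional regular setting of Proposition~\ref{huneke} — I would use properties (i) and (ii) of contracted ideals together with the additivity $o(IJ)=o(I)+o(J)$. By (ii) and an induction on $r$, the product $I_1\cdots I_r$ is again contracted, and then (i) combined with additivity of the order gives
\[
\mu(I_1\cdots I_r)=o(I_1\cdots I_r)+1=\sum_{j=1}^{r}o(I_j)+1=\sum_{j=1}^{r}\bigl(\mu(I_j)-1\bigr)+1=\sum_{j=1}^{r}\mu(I_j)-(r-1).
\]
Specializing to $I_1=\cdots=I_r=I$ with $r=k$ gives $\mu(I^k)=k(\mu(I)-1)+1$, so equality holds in both inequalities. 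Equivalently, one can rerun the induction of the first part with Proposition~\ref{huneke} in place of Theorem~\ref{shalom} at each step.

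I do not anticipate any real difficulty: the proof is a bookkeeping induction on top of results already established. The only points deserving a word of care are the remark that a product of equigenerated ideals is equigenerated (which makes the induction legitimate) and the observation that properties (i), (ii) of contracted ideals are only available in dimension two, so the ``moreover'' clause must be understood in that context; the main obstacle, such as it is, is merely to state these hypotheses cleanly.
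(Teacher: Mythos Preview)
Your argument is correct and follows essentially the same route as the paper: induction on $r$ with Theorem~\ref{shalom} as the inductive step, specialization to $I_1=\cdots=I_r=I$ for the ``in particular'' clause, and the contracted case via Proposition~\ref{huneke} (or equivalently properties (i), (ii) and additivity of $o$). Your extra remark that a product of equigenerated ideals is again equigenerated is a useful detail the paper leaves implicit, since it is needed to apply Theorem~\ref{shalom} at each step of the induction.
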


\begin{proof}
We will prove the inequality by induction on $r$. The case $r=1$ is trivial. Now assume that the assertion holds for $r=k$, we will prove it for $r=k+1$. Clearly
$I_1\cdots I_{k+1}=(I_1\cdots I_k)I_{k+1}$. Then, by Theorem~\ref{shalom} and our induction hypothesis we have
\begin{eqnarray*}
\mu(I_1\cdots I_{k+1})&\geq &\mu(I_1\cdots I_k)+\mu(I_{k+1})-1\geq \sum_{i=1}^k\mu(I_i)-(k-1)+\mu(I_{k+1})-1\\
&=&\sum_{i=1}^{k+1}\mu(I_i)-((k+1)-1),
\end{eqnarray*}
as desired.

The statement for contracted ideals is an obvious consequence of Proposition~\ref{huneke}.
\end{proof}

Notice that for a contracted ideal,  Corollary~\ref{kill} yields  $$\mu(I^{k+1})-\mu(I^k)=\mu(I)-1>0 \quad \text{for all} \quad k.$$
Thus, since any contracted ideal is integrally closed, we have a new proof of Corollary~\ref{tony}.

\medskip
Corollary~\ref{kill} implies in particular that $\mu(I^2)\geq 2\mu(I)-1$. This lower bound is never reached when the analytic spread  of the ideal $I'$ (see
Definition~\ref{prime}) is $\geq 3$ and $I$ has a high number of generators. Thus for these situations we need a better lower bound. For this purpose we  recall a theorem in additive number theory,  due  to Freiman
\cite{Fr}: Let $S$ be a finite subset of $\ZZ^n$, and let $A(S)$ be the affine hull of the set $S$, that is, the smallest affine subspace of $\QQ^n$ containing
$S$.  The  dimension $d$ of $A(S)$ is called the {\em Freiman dimension} of $S$. The doubling of $S$ is the set $2S=\{a+b\: a,b\in S\}$. Now the theorem of
Freiman states that
\[
|2S|\geq (d+1)|S|-{d+1 \choose 2}.
\]
We use Freiman's theorem to prove

\begin{Theorem}
\label{freiman}
Let $I\subset K[x_1,\ldots,x_n]$  be an equigenerated graded  ideal. Then
\[
\mu(I^2)\geq \ell(I')\mu(I)-{\ell(I') \choose 2}.
\]
In particular, if $I$ is a monomial ideal, then $\ell(I')$ can be replaced by $\ell(I)$ in the above inequality.
\end{Theorem}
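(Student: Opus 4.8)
The plan is to reduce to the monomial case exactly as in the proof of Theorem~\ref{shalom}, and then to reformulate the monomial statement as one about sumsets of lattice points, to which Freiman's theorem applies directly.

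First I would pass to an initial ideal. Say $I$ is generated in degree $d_1$, so $I^2$ is generated in degree $2d_1$. Fix a monomial order $<$. By Macaulay's theorem $\mu(I')=\mu(I)$ and $\mu((I^2)')=\mu(I^2)$, and since $\ini_<(I)^2\subseteq \ini_<(I^2)$ one gets $(I')^2\subseteq (I^2)'$, both ideals being equigenerated in degree $2d_1$; hence $\mu(I^2)=\mu((I^2)')\geq \mu((I')^2)$. As $\mu(I)=\mu(I')$, the right-hand side of the asserted inequality equals $\ell(I')\mu(I')-\binom{\ell(I')}{2}$, so it suffices to prove the inequality for the equigenerated \emph{monomial} ideal $L:=I'$ (see Definition~\ref{prime}), i.e. to show $\mu(L^2)\geq \ell(L)\,\mu(L)-\binom{\ell(L)}{2}$.

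Next comes the combinatorial translation. Write $G(L)=\{u_1,\dots,u_m\}$ with $u_i=x^{a_i}$, $a_i\in\ZZ_{\geq 0}^n$, all of the same coordinate sum $d_1\geq 1$, and set $S=\{a_1,\dots,a_m\}\subset\ZZ^n$, so $|S|=\mu(L)$. Since $L$ is equigenerated, $L^2$ is equigenerated in degree $2d_1$ and its minimal generators are exactly the \emph{distinct} monomials $u_iu_j=x^{a_i+a_j}$ — of two monomials of equal degree neither divides the other unless they coincide — so $\mu(L^2)=|\{a_i+a_j : 1\le i,j\le m\}|=|2S|$ in Freiman's notation. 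It remains to identify $\ell(L)$ with $\dim A(S)+1$. Because $L$ is equigenerated, $F(L)\iso K[u_1,\dots,u_m]\subseteq S$, hence $\ell(L)=\dim K[u_1,\dots,u_m]=\trdeg_K K(u_1,\dots,u_m)=\dim_\QQ\sum_i\QQ a_i=\rank_\ZZ\sum_i\ZZ a_i$. All the $a_i$ lie on the hyperplane $\{x:\sum_j x_j=d_1\}$ with $d_1\geq 1$, so $a_1\notin\mathrm{span}_\QQ\{a_i-a_1:i\}$, every such difference having coordinate sum $0$; therefore $\dim_\QQ\sum_i\QQ a_i=1+\dim_\QQ\mathrm{span}_\QQ\{a_i-a_1:i\}=1+\dim A(S)$, i.e. $\ell(L)=\dim A(S)+1$.

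Finally I would simply invoke Freiman's theorem for $S$: $|2S|\geq(\dim A(S)+1)|S|-\binom{\dim A(S)+1}{2}$, which by the identifications above reads $\mu(L^2)\geq\ell(L)\,\mu(L)-\binom{\ell(L)}{2}$. This settles the monomial case and hence, via the first paragraph, the general statement. For the last assertion, if $I$ is itself a monomial ideal then $\ini_<(I)=I$, so $I'=I$ and $\ell(I')=\ell(I)$. I expect the crux to be the dictionary $\mu(L^2)=|2S|$ together with $\ell(L)=\dim A(S)+1$; once this is in place Freiman's theorem does all the work, and the only point needing a little care is the initial-ideal reduction — checking $(I')^2\subseteq(I^2)'$ and that neither $\mu$ nor $\ell$ is disturbed — which runs parallel to the reduction already used for Theorem~\ref{shalom}.
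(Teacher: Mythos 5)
Your proposal is correct and takes essentially the same route as the paper: reduce to the equigenerated monomial case via the initial ideal (exactly as in the proof of Theorem~\ref{shalom}), identify $\mu(L)=|S|$, $\mu(L^2)=|2S|$ and $\ell(L)=\dim A(S)+1$, and then apply Freiman's inequality. The only (minor) difference is that you spell out the justification that the rank of the exponent span exceeds the affine dimension by one via the coordinate-sum argument, where the paper simply quotes the rank of the exponent matrix.
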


\begin{proof}
Since $\mu(I)=\mu(I')$ and $\mu(I^2)=\mu(I'^2)$ we may as well assume that $I$ is a monomial ideal. For a monomial ideal $J$ with unique minimal set of monomial
generators $G(J)$, we denote by $S(J)\subset \ZZ^n$ the set of exponent vectors of the elements of $G(J)$. Since $I$ and $I^2$ are  equigenerated, it follows
that $\mu(I)=|S(I)|$ and $\mu(I^2)=|S(I^2)|=|2S(I)|$. Let $d$ be the Freiman dimension of $S(I)$. The theorem will follow from Freiman's inequality once we have
shown that $\ell(I)=d+1$.

Let $S(I)=\{\cb_1,\ldots,\cb_m\}$ and $C$ the matrix whose row vectors are $\cb_1,\ldots,\cb_m$. Then $F(I)=K[\xb^{\cb_1},\ldots,\xb^{\cb_m}]$, and hence
$\ell(I)=\dim F(I)=\rank C$, see \cite[Lemma~10.3.19]{HH}. On the other hand,  the affine space spanned by $S(I)$ is of dimension $\rank C-1$. This concludes the
proof of the theorem.
\end{proof}

\section{The $h$-vector of the fiber ring}\label{two}

Let $(R,\mm,K)$ be a Noetherian  local ring or a standard graded $K$-algebra, $I\subset R$ a proper ideal which we  assume to be graded if $R$ is standard
graded. Let $d$ be the analytic spread of $I$. Then $F(I)$ is of Krull dimension $d$ and
\[
\Hilb_{F(I)}(t)=Q(t)/(1-t)^d,
\]
where
\[
Q(t)=\sum_{i\geq 0}h_it^i
\]
is a polynomial with $h_0=1$. One calls $(h_0,h_1,\ldots)$ the {\em $h$-vector} of $F(I)$.

\medskip
We want to interpret  Theorem~\ref{freiman} in relation to the $h$-vector of $F(I)$.

\begin{Proposition}
\label{compare}
Let $I\subset R$ be an ideal with  $\ell(I)=2$. Then
\begin{enumerate}
\item[(a)] $\mu(I^2)\geq 2\mu(I)-1$,  if and only if $h_2\geq 0$.
\item[(b)] $\mu(I^k)\geq k(\mu(I)-1)+1$ for all $k\geq 1$, if $h_i\geq 0$ for all $i$.
\item[(c)] $\mu(I^k)=  k(\mu(I)-1)+1$ for all $k\geq 1$, if and only if $h_i=0$ for all $i\geq 2$.
\end{enumerate}
\end{Proposition}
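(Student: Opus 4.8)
The plan is to deduce all three statements from a single closed formula for $\mu(I^k)$ in terms of the $h$-vector, obtained by expanding $\Hilb_{F(I)}(t)=Q(t)/(1-t)^2$. The crucial input is the remark at the start of Section~\ref{one}: the $k$th graded component of $F(I)$ is $I^k/\mm I^k$, so $\mu(I^k)=\Hilb(k,F(I))$ for \emph{every} $k\geq 0$, not merely for $k\gg 0$; hence the rational expression for $\Hilb_{F(I)}$ may be evaluated in all degrees. Expanding $1/(1-t)^2=\sum_{j\geq 0}(j+1)t^j$, multiplying by $Q(t)=\sum_{i\geq 0}h_it^i$, and reading off the coefficient of $t^k$ gives
\[
\mu(I^k)=\sum_{i=0}^{k}h_i\,(k-i+1)\qquad\text{for all }k\geq 0.
\]
Specializing to $k=0$ recovers $h_0=1$, and to $k=1$ gives $h_1=\mu(I)-2$; note that $\mu(I)\geq\ell(I)=2$, so in fact $h_1\geq 0$ automatically. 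Substituting $h_0=1$ and $h_1=\mu(I)-2$ into the formula and collecting terms yields the identity I would actually use,
\begin{equation}
\label{eq:mukh}
\mu(I^k)=k\bigl(\mu(I)-1\bigr)+1+\sum_{i=2}^{k}h_i\,(k-i+1)\qquad\text{for all }k\geq 1.
\end{equation}

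From \eqref{eq:mukh} each part follows directly. For (a), set $k=2$: the correction sum is the single term $h_2$, so $\mu(I^2)=2\mu(I)-1+h_2$, which makes the asserted equivalence immediate. For (b), all the weights $k-i+1$ with $2\leq i\leq k$ are $\geq 1$, so if $h_i\geq 0$ for every $i$ then the correction sum in \eqref{eq:mukh} is nonnegative and $\mu(I^k)\geq k(\mu(I)-1)+1$. For (c), the ``if'' direction is clear from \eqref{eq:mukh}, since $h_i=0$ for all $i\geq 2$ makes the correction sum vanish identically; for the ``only if'' direction I would induct on $k$, observing that the case $k=2$ of \eqref{eq:mukh} forces $h_2=0$, and that once $h_2=\cdots=h_{k-1}=0$ is known, the case $k$ of \eqref{eq:mukh} reads $0=h_k$.

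There is no genuine obstacle here; the argument is pure bookkeeping with the Hilbert series of $F(I)$. The one step that must be stated with care is the passage to \eqref{eq:mukh}: it depends on $\mu(I^k)$ agreeing with the Hilbert \emph{function} of $F(I)$ in \emph{every} degree, so that $Q(t)/(1-t)^2$ may legitimately be evaluated at small $k$, in particular at $k=2$. This is exactly what turns the implication ``$h_2\geq 0\Rightarrow\mu(I^2)\geq 2\mu(I)-1$'' into the equivalence of part (a), and it is the content of the remark at the start of Section~\ref{one}, not of the (weaker) fact that $\mu(I^k)$ is \emph{eventually} a polynomial in $k$.
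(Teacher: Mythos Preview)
Your proof is correct and follows essentially the same approach as the paper: expand $Q(t)/(1-t)^2$, read off $\mu(I^k)$ as a linear combination of the $h_i$, and compare with $k(\mu(I)-1)+1$. The only cosmetic difference is in the ``only if'' direction of (c): the paper sums the closed form $\sum_{k\geq 0}(k(\mu(I)-1)+1)t^k=(1+(\mu(I)-2)t)/(1-t)^2$ to identify $Q(t)$ directly, whereas you peel off the $h_i$ one at a time by induction; both arguments are equally short.
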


\begin{proof} (a) we have
\begin{eqnarray}\label{htwo}
\Hilb_{F(I)}(t)&=&\sum_{k\geq 0}\mu(I^k)t^k=Q(t)/(1-t)^2\\
&=&(1+h_1t+h_2t^2+\cdots )(1+2t+3t^2+\cdots).\nonumber
\end{eqnarray}
It follows that $\mu(I)=2+h_1$ and $\mu(I^2)=3+2h_1+h_2$. Therefore,
$$\mu(I^2)-(2\mu(I)-1)=h_2.$$ This yields the desired conclusion.

(b) By (\ref{htwo}) we have $\mu(I^k)=(k+1)+kh_1+ \sum_{i\geq 1}(k-i)h_{i+1}$. Hence
\[
\mu(I^k)-(k(\mu(I)-1)+1) =(k+1)+kh_1+ \sum_{i\geq 1}(k-i)h_{i+1}-k(h_1+1)-1=  \sum_{i\geq 1}(k-i)h_{i+1}.
\]
Since $ \sum_{i\geq 1}(k-i)h_{i+1}\geq 0$, if all $h_i\geq 0$, the assertion follows.

(c) It follows from the proof of (b) that $\mu(I^k)=  k(\mu(I)-1)+1$ for all $k\geq 1$, if  $h_i=0$ for all $i\geq 2$.

Conversely, suppose that  $\mu(I^k)=  k(\mu(I)-1)+1$ for all $k\geq 1$. Then
\[
\Hilb_{F(I)}(t)=\sum_{k\geq 0}(k(\mu(I)-1)+1)t^k=(1+(\mu(I)-2)t)/(1-t)^2.
\]
This shows that $h_i=0$  for $i\geq 2$.
\end{proof}

\begin{Corollary}
\label{hpositive}
Let $I\subset K[x,y]$  be an equigenerated graded ideal. Then $h_2\geq 0$.
\end{Corollary}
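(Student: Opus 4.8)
The plan is to deduce the inequality from Proposition~\ref{compare}(a), whose hypothesis $\ell(I)=2$ forces me first to pin down the analytic spread of $I$. Since the $h$-vector of $F(I)$ is unchanged under extension of the base field, I may assume $K$ is infinite throughout.

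If $I$ is a principal ideal $(f)$, then $I^k/\mm I^k\iso S/\mm$ for all $k$, so $F(I)$ is a polynomial ring in one variable, $\ell(I)=1$, $\Hilb_{F(I)}(t)=1/(1-t)$, and the $h$-vector is $(1,0,0,\dots)$; in particular $h_2=0$. So assume from now on that $I$ is not principal. Then Corollary~\ref{iff} gives $\mu(I^k)<\mu(I^{k+1})$ for all $k$, i.e.\ the Hilbert function $k\mapsto\Hilb(k,F(I))=\mu(I^k)$ of $F(I)$ is strictly increasing; hence the Hilbert polynomial of $F(I)$ has positive degree, so $\ell(I)=\dim F(I)\geq 2$. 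On the other hand $\ell(I)\leq\dim K[x,y]=2$, and therefore $\ell(I)=2$.

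Now I invoke Theorem~\ref{shalom} with $J=I$, which yields $\mu(I^2)\geq 2\mu(I)-1$. Since $\ell(I)=2$, Proposition~\ref{compare}(a) applies and states precisely that $\mu(I^2)\geq 2\mu(I)-1$ holds if and only if $h_2\geq 0$; hence $h_2\geq 0$, as required. I expect no genuine obstacle here: the only point demanding care is that Proposition~\ref{compare}(a) is stated only for ideals of analytic spread $2$, which is why the principal (equivalently $\ell(I)=1$) case must be peeled off separately and why the detour through Corollary~\ref{iff} is needed to certify that $\ell(I)=2$ in the remaining case. (One could equally run the $\ell(I)=1$ case directly: there $F(I)\iso K[f_1,\dots,f_m]\subset S$ is a one-dimensional affine domain, hence Cohen--Macaulay, so its $h$-vector is the Hilbert function of an Artinian reduction and is automatically nonnegative.)
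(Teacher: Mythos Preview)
Your proof is correct and follows essentially the same route as the paper: dispose of the principal case, verify $\ell(I)=2$ in the non-principal case, invoke the bound $\mu(I^2)\geq 2\mu(I)-1$ coming from Theorem~\ref{shalom}, and then read off $h_2\geq 0$ from Proposition~\ref{compare}(a). The only cosmetic difference is in how $\ell(I)=2$ is verified: the paper factors $I=fJ$ with $\height J=2$ and uses $\ell(I)=\ell(J)=2$, whereas you argue via the strict growth of $\mu(I^k)$ supplied by Corollary~\ref{iff}; both arguments are valid and amount to the same thing.
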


\begin{proof}
We may assume that $I$ is not a principal ideal, because otherwise the statement is trivial. If $\height(I)=1$, then $I=fJ$ where $J$ is an equigenerated ideal
of height $2$.  By Corollary~\ref{kill} we have $\mu(I^2)\geq 2\mu(I)-1$. Since  $\ell(I)=\ell(J)=2$, it follows from Proposition~\ref{compare} that $h_2\geq
0$.
\end{proof}

\begin{Example}\label{failure}
{\em If $I$ is not equigenerated, then the inequality $\mu(I^2)\geq 2\mu(I)-1$ may fail.
The following example was communicated to us by Eliahou.

\medskip
Let $I=(x^6, x^5y^2, x^4y^3,x^2y^4,y^6)$. Then $\mu(I^2)=8$, while $2\mu(I)-1=9$.

With CoCoA it can be checked that
\begin{eqnarray*}
 F(I)\iso K[z_1,\ldots,z_5]/(z_1z_5, z_2z_5, z_4^2 - z_3z_5, z_1z_4, z_2z_4, z_1z_3).
\end{eqnarray*}

The series $1+\sum_{k\geq 1}(3k+2)t^k$ is the Hilbert series  of $F(I)$. It follows that
\[
\mu(I^k)=k(\mu(I)-2)+2< k(\mu(I)-1)+1\quad  \text{for all $k\geq 2$}.
\]
One can also check that $\dim F(I)=2$ and $\depth F(I)=1$, and so $F(I)$ is not Cohen--Macaulay. The situation is much better if $F(I)$ is Cohen--Macaulay, as
the next result shows. }
\end{Example}

\begin{Corollary}
\label{cm}
Let $I\subset R$ be an ideal with  $\ell(I)=2$,  and assume that $F(I)$ is Cohen--Macaulay. Then  $\mu(I^k)\geq k(\mu(I)-1)+1$ for all $k\geq 1$.
\end{Corollary}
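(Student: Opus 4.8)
The plan is to deduce this from Proposition~\ref{compare}(b), for which it suffices to show that the $h$-vector $(h_0,h_1,h_2,\dots)$ of $F(I)$ is nonnegative. Recall that $F(I)$ is a standard graded $K$-algebra (with $K$ the residue field of $R$) of Krull dimension $\ell(I)=2$, with Hilbert series $Q(t)/(1-t)^2$ where $Q(t)=\sum_{i\geq 0}h_it^i$. The expected fact is the classical one that a Cohen--Macaulay standard graded algebra has a nonnegative $h$-vector; once we have $h_i\geq 0$ for all $i$, Proposition~\ref{compare}(b) gives $\mu(I^k)\geq k(\mu(I)-1)+1$ for all $k\geq 1$ and we are done.

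First I would reduce to the case of an infinite field. Passing from $F(I)$ to $F(I)\otimes_K K'$ for an infinite extension field $K'\supseteq K$ is a flat base change that preserves the standard grading, the Hilbert function, the Krull dimension, and the Cohen--Macaulay property; in particular it does not change the $h$-vector. Hence it is enough to prove $h_i\geq 0$ under the extra hypothesis that $K$ is infinite.

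Next, over an infinite field I would invoke graded Noether normalization to choose a homogeneous system of parameters $\theta_1,\theta_2$ of $F(I)$ consisting of linear forms. Since $F(I)$ is Cohen--Macaulay of dimension $2$, the sequence $\theta_1,\theta_2$ is a regular sequence, so $\bar A:=F(I)/(\theta_1,\theta_2)$ is an Artinian standard graded $K$-algebra whose Hilbert series is exactly $Q(t)$. Therefore $h_i=\dim_K\bar A_i\geq 0$ for all $i$ (see \cite{BH} for this standard argument). Combining this with Proposition~\ref{compare}(b) yields the assertion.

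I do not anticipate a genuine obstacle here: the only point requiring a little care is the field-extension reduction, but this is routine because Hilbert functions are invariant under flat base change and the Cohen--Macaulay property descends and ascends along such extensions. Everything else is the standard translation between Hilbert series, $h$-vectors, and regular sequences of linear forms, already used implicitly in Section~\ref{two}.
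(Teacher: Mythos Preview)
Your proof is correct and follows exactly the same route as the paper: the paper simply cites \cite[Corollary~4.1.10]{BH} for the nonnegativity of the $h$-vector of a Cohen--Macaulay standard graded algebra and then invokes Proposition~\ref{compare}, while you spell out the standard argument behind that citation (base change to an infinite field, linear system of parameters, Artinian reduction). There is no substantive difference.
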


\begin{proof}
If $F(I)$ is Cohen--Macaulay, then $h_i\geq 0$ for all $i$, see \cite[Corollary 4.1.10]{BH}. Thus the desired conclusion follows from Proposition~\ref{compare}.
\end{proof}

The next proposition gives an interpretation of the difference between the right side and left side term in the inequality of Theorem~\ref{freiman}.

\begin{Proposition}
\label{difference}
Let $I\subset R$ be a proper  ideal. Then
\[
\mu(I^2)=\ell(I)\mu(I)-{\ell(I)\choose 2}+h_2.
\]

\end{Proposition}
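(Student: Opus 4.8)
The claim is a purely formal identity about the Hilbert series of $F(I)$, so the plan is to extract it from the expansion of $Q(t)/(1-t)^{\ell}$ where $\ell = \ell(I)$. Write $Q(t) = \sum_{i\ge 0} h_i t^i$ with $h_0 = 1$. Since
\[
\Hilb_{F(I)}(t) = \frac{Q(t)}{(1-t)^{\ell}} = Q(t)\sum_{k\ge 0}\binom{k+\ell-1}{\ell-1}t^k,
\]
the coefficient of $t^1$ gives $\mu(I) = \Hilb(1,F(I)) = \binom{\ell}{\ell-1} + h_1 = \ell + h_1$, hence $h_1 = \mu(I) - \ell$. The coefficient of $t^2$ gives
\[
\mu(I^2) = \binom{\ell+1}{\ell-1} + h_1\binom{\ell}{\ell-1} + h_2 = \binom{\ell+1}{2} + \ell h_1 + h_2.
\]
Now I substitute $h_1 = \mu(I) - \ell$ into this and simplify: $\binom{\ell+1}{2} + \ell(\mu(I)-\ell) + h_2 = \ell\mu(I) - \ell^2 + \binom{\ell+1}{2} + h_2$, and since $\binom{\ell+1}{2} - \ell^2 = \frac{\ell(\ell+1)}{2} - \ell^2 = \frac{\ell - \ell^2}{2} = -\binom{\ell}{2}$, this is exactly $\ell\mu(I) - \binom{\ell}{2} + h_2$, which is the assertion.

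One point that needs a word of care: the expansion $1/(1-t)^{\ell} = \sum_k \binom{k+\ell-1}{\ell-1}t^k$ presupposes $\ell = \ell(I) \ge 1$, which holds since $I$ is a proper ideal (so $F(I)\ne 0$ and $\dim F(I)\ge 1$ — indeed $\ell(I)\ge \height I$, and in any case $F(I)$ being a standard graded $K$-algebra with $\mu(I)\ge 1$ generators in degree $1$ has positive dimension unless $I$ is nilpotent, which cannot happen for a proper ideal in a ring where we consider $\mu$). Since there is genuinely nothing subtle beyond this bookkeeping, I do not expect a real obstacle; the only thing to double-check is the binomial identity $\binom{\ell+1}{\ell-1} = \binom{\ell+1}{2}$ and the collapse $\binom{\ell+1}{2} - \ell^2 = -\binom{\ell}{2}$, both of which are immediate. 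It is worth noting that this identity, combined with Theorem~\ref{freiman}, is precisely what yields Corollary~\ref{h2} (that $h_2\ge 0$ for equigenerated monomial ideals), so the proof should be written so that the roles of $\ell(I)$ and the equigenerated/monomial hypotheses are clearly separated — here no such hypothesis is needed, the identity is completely general.
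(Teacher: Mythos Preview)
Your proof is correct and is essentially the same computation as the paper's: both expand $\Hilb_{F(I)}(t)=Q(t)/(1-t)^{\ell}$, read off $\mu(I)=\ell+h_1$ and $\mu(I^2)=\binom{\ell+1}{2}+\ell h_1+h_2$, and substitute. Your side remark about needing $\ell\ge 1$ is unnecessary (the identity holds formally for $\ell=0$ as well, and your justification that $\ell(I)\ge\height I$ is a bit muddled), but it does not affect the argument.
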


\begin{proof}
For the fiber ring of the ideal $I$ we have
\begin{eqnarray*}
\Hilb_{F(I)}(t)&=&\frac{(1+h_1t+h_2t^2+\ldots)}{(1-t)^{\ell(I)}}\\
&=&{(1+h_1t+h_2t^2+\ldots)}(1+\ell(I)t+{\ell(I)+1\choose 2}t^2+\ldots)\\
&=& 1+(h_1+\ell(I))t+(h_1\ell(I)+{\ell(I)+1\choose 2}+h_2)t^2+\ldots .
\end{eqnarray*}
It follows that $\mu(I)=h_1+\ell(I)$ and $\mu(I^2)=h_1\ell(I)+{\ell(I)+1\choose 2}+h_2$, which yields the desired result.
\end{proof}

Comparing Theorem~\ref{freiman} with Proposition~\ref{difference}, we obtain
\begin{Corollary}\label{h2}
Let $I\subset K[x_1,\ldots ,x_n]$ be an equigenerated monomial ideal. Then $h_2\geq 0$.
\end{Corollary}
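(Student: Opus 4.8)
The plan is to combine the two results the corollary explicitly invokes. Theorem~\ref{freiman} gives, for an equigenerated monomial ideal $I\subset K[x_1,\ldots,x_n]$, the lower bound
\[
\mu(I^2)\geq \ell(I)\mu(I)-\binom{\ell(I)}{2},
\]
since for monomial ideals $\ell(I')$ may be replaced by $\ell(I)$. On the other hand, Proposition~\ref{difference}, applied to the proper ideal $I$, gives the exact identity
\[
\mu(I^2)=\ell(I)\mu(I)-\binom{\ell(I)}{2}+h_2,
\]
where $h_2$ is the degree-$2$ coefficient of the $h$-vector of the fiber ring $F(I)$.

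Subtracting, one gets immediately that $h_2 = \mu(I^2) - \bigl(\ell(I)\mu(I)-\binom{\ell(I)}{2}\bigr) \geq 0$. So the argument is a one-line deduction: the quantity $\ell(I)\mu(I)-\binom{\ell(I)}{2}$ is simultaneously a lower bound for $\mu(I^2)$ (Theorem~\ref{freiman}) and equals $\mu(I^2)-h_2$ (Proposition~\ref{difference}), forcing $h_2\geq 0$.

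There is essentially no obstacle here; the only point to be a little careful about is that Proposition~\ref{difference} is stated for an arbitrary proper ideal $I\subset R$ in a Noetherian local ring or standard graded $K$-algebra, which covers the polynomial ring case, and that Theorem~\ref{freiman} applies precisely to equigenerated monomial ideals — exactly the hypothesis of the corollary. One should also note the degenerate case: if $I$ is a principal ideal then $\ell(I)=1$, $\mu(I)=\mu(I^2)=1$, $\binom{1}{2}=0$, and both formulas give $h_2=0\geq 0$, so nothing special needs to be said. I would simply write the proof as: "By Theorem~\ref{freiman}, $\mu(I^2)\geq \ell(I)\mu(I)-\binom{\ell(I)}{2}$. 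By Proposition~\ref{difference}, $\mu(I^2)=\ell(I)\mu(I)-\binom{\ell(I)}{2}+h_2$. Comparing the two gives $h_2\geq 0$."
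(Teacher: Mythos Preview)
Your proof is correct and is exactly the approach the paper takes: the corollary is stated immediately after Proposition~\ref{difference} with the one-line justification ``Comparing Theorem~\ref{freiman} with Proposition~\ref{difference}, we obtain\ldots'', which is precisely the comparison you wrote out.
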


\section{Equigenrated Monomial ideals in $K[x,y]$}\label{three}

In this section we consider in more details the case that $I$ is a monomial ideal in two variables, and classifies those ideals among them which have the
property that equality holds in the inequalities given in Theorem~\ref{shalom} and Corollary~\ref{kill}.

Let $K$ be a field and $I\subset K[x,y]$ be a monomial ideal. The unique minimal set of monomial generators will be denoted by $G(I)$. Let
$G(I)=\{x^{a_i}y^{b_i}\}_{i=1,\ldots,m}$. We may assume $a_1>a_2>\ldots>a_m\geq 0 $. Then $0\leq b_1<b_2<\ldots<b_m$. Conversely, given sequences of integers
\begin{eqnarray}\label{sequences}
a_1>a_2>\ldots>a_m\geq 0 \quad \text{and} \quad 0\leq b_1<b_2<\ldots<b_m,
\end{eqnarray}
they define the monomial ideal $I$ with $G(I)=\{x^{a_i}y^{b_i}\}_{i=1,\ldots,m}$. Therefore, monomial ideals in $K[x,y]$ are in  bijection to  sequences as in
(\ref{sequences}).

We will always assume that $m>1$. We are interested in the number of generators of $I^k$.  Since  $I=x^{a_m}y^{b_1}J$, where $J$  is a monomial  ideal of height
$2$ containing the pure powers $x^{a_1-a_m}$ and $y^{b_m-b_1}$, and  since $\mu(I^k)=\mu(J^k)$ for all $k$,  for most statements to follow, it is not an
restriction to assume that $\height(I)=2$.

If  $I$ is an equigenerated ideal generated in degree $d$,  then
\[
G(I)=\{x^{a_i}y^{d-a_i}\}_{i=1,\ldots,m}\quad \text{with}\quad d=a_1>a_2>\ldots> a_m=0.
\]
This case is significantly simpler than the general case, since only one sequence of integers determines the ideal.

More generally, let $A=\{a_1,a_2,\ldots,a_m\}$ be a set of non-negative integers,  $d$ a positive integer with $a_i\leq d$ for all $i$, and let
$I=(x^{a_1}y^{d-a_1},\ldots, x^{a_m}y^{d-a_m})$. Then $G(I)=\{x^{a_1}y^{d-a_1},\ldots, x^{a_m}y^{d-a_m}\}$ and $\mu(I)=m$.

Let $I$ and $J$ be equigenerated monomial ideals in $S=K[x,y]$. We know from Theorem~\ref{shalom} that $\mu(IJ)\geq \mu(I) + \mu(J) -1$. The following result
tells us when we have equality.

\begin{Theorem}\label{new}
Let $I,J\subset K[x,y]$ be equigenerated monomial ideals. The following conditions are equivalent:
\begin{enumerate}
\item [(i)] $\mu(IJ)=\mu(I)+\mu(J)-1$;
\item [(ii)] There exist positive integers $a$, $r$ and $s$ such that $I=(x^a,y^a)^r$ and $J=(x^a,y^a)^s$.
\end{enumerate}
\end{Theorem}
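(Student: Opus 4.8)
The plan is to translate the equation $\mu(IJ)=\mu(I)+\mu(J)-1$ into an equality of sizes of sumsets of integers, and then to feed that into the classical description of when the bound $|A+B|\ge|A|+|B|-1$ over $\ZZ$ is attained.

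The implication (ii)\implies(i) is immediate: if $I=(x^a,y^a)^r$ and $J=(x^a,y^a)^s$ then $IJ=(x^a,y^a)^{r+s}$, so $\mu(I)=r+1$, $\mu(J)=s+1$ and $\mu(IJ)=r+s+1=\mu(I)+\mu(J)-1$. For (i)\implies(ii) I would first, using the standing assumption that $I$ and $J$ are not principal, reduce to the case $\height I=\height J=2$ exactly as at the beginning of this section (strip off the common monomial factors; this changes none of $\mu(I),\mu(J),\mu(IJ)$). Say $I$ is generated in degree $d$ and $J$ in degree $e$; write $G(I)=\{x^{\alpha}y^{d-\alpha}:\alpha\in A\}$ and $G(J)=\{x^{\beta}y^{e-\beta}:\beta\in B\}$ with $A,B\subset\ZZ_{\ge0}$ finite, $0,d\in A$ and $0,e\in B$. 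Since $IJ$ is equigenerated in degree $d+e$, its minimal monomial generators are exactly the $x^{\gamma}y^{d+e-\gamma}$ with $\gamma$ in the sumset $A+B=\{\alpha+\beta:\alpha\in A,\ \beta\in B\}$; hence $\mu(I)=|A|$, $\mu(J)=|B|$, $\mu(IJ)=|A+B|$, and (i) says precisely $|A+B|=|A|+|B|-1$.

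Next I would apply the classical equality case of that bound: for finite sets $A,B\subset\ZZ$ with $|A|,|B|\ge 2$, the inequality $|A+B|\ge|A|+|B|-1$ (which for equigenerated monomial ideals is Theorem~\ref{shalom}) is an equality if and only if $A$ and $B$ are arithmetic progressions with one and the same common difference $a\ge1$; this is the attained case of Freiman's inequality in Freiman dimension one, cf.\ \cite{Fr}. So $A=\{\alpha_0,\alpha_0+a,\dots,\alpha_0+ra\}$ with $r=\mu(I)-1\ge1$; since $0\in A$ is its least element and $d=\max A$, this forces $\alpha_0=0$ and $d=ra$, hence $A=\{0,a,2a,\dots,ra\}$, which is exactly the set of $x$-exponents of the generators of $(x^a,y^a)^r$, so $I=(x^a,y^a)^r$. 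The same argument, with the same $a$, gives $B=\{0,a,\dots,sa\}$ with $s=\mu(J)-1$, i.e.\ $J=(x^a,y^a)^s$. This is (ii).

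Everything here is bookkeeping once one has the equality case of $|A+B|\ge|A|+|B|-1$, and that is the only substantive point; if one wants the paper to be self-contained rather than to quote it, this is the main obstacle. I would prove it by induction on $|A|+|B|$. The base case $|B|=2$, say $B=\{\beta_0,\beta_0+b\}$, is the statement that $|A\cup(A+b)|=|A|+1$, hence $|A\cap(A+b)|=|A|-1$, which makes $A$ closed under adding $b$ except at its largest element and under subtracting $b$ except at its smallest, so $A$ is an arithmetic progression of difference $b$. For the inductive step one removes $\max B$ (and, separately, $\min B$) from $B$, verifies that the sumset size drops by exactly one so that the inductive hypothesis applies, and pieces the two conclusions together; equivalently one can argue directly with the staircase
\[
\alpha_0+\beta_0<\alpha_1+\beta_0<\dots<\alpha_{r}+\beta_0<\alpha_{r}+\beta_1<\dots<\alpha_{r}+\beta_{s}
\]
(here $A=\{\alpha_0<\dots<\alpha_r\}$, $B=\{\beta_0<\dots<\beta_s\}$), which already lists $|A|+|B|-1$ elements of $A+B$ and hence is all of $A+B$: the delicate point is then to show that every remaining sum $\alpha_i+\beta_j$ must equal a term of the staircase and that this coincidence pattern forces all the consecutive differences of $A$ and of $B$ to equal a single value.
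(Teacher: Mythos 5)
Your proof is correct, and its combinatorial core coincides with the paper's: once the $\mu(I)+\mu(J)-1$ ``staircase'' products are known to exhaust $G(IJ)$, the remaining products of generators force all consecutive gaps in the two exponent sets to equal one common value $a$. The difference is packaging. The paper argues directly on the generators: it records the inclusion $x^{d_2}G(I)\cup G(J)y^{d_1}\subseteq G(IJ)$ together with the fact that the two sets meet only in $x^{d_2}y^{d_1}$, observes that under (i) the inclusion must be an equality, and then compares the extra elements $x^{b_i+a_1}y^{d_1+d_2-(b_i+a_1)}\in G(IJ)$ with the resulting ordered list of exponents to get $b_j=ja_1$, $a_i=ib_1$ and $a_1=b_1$ --- which is literally your ``direct staircase'' alternative. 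Your main line instead translates to exponent sumsets ($\mu(IJ)=|A+B|$, justified correctly since $IJ$ is equigenerated in degree $d+e$) and invokes the classical equality case of $|A+B|\ge|A|+|B|-1$ for finite $A,B\subset\ZZ$ with $|A|,|B|\ge 2$: both sets are arithmetic progressions with a single common difference. Your induction sketch (base case $|B|=2$ via $|A\cap(A+b)|=|A|-1$; peel off $\max B$ and $\min B$, check the sumset drops by exactly one, and let $A$ anchor the common difference when piecing the conclusions together) is sound, so the lemma can be either cited or proved self-containedly. What your route buys is a clean separation of the citable additive lemma from the trivial bookkeeping; the paper's route stays self-contained and elementary, reserving additive number theory for Theorem~\ref{freiman}. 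One shared caveat: as literally stated the equivalence needs the section's standing hypotheses that $I,J$ are non-principal of height $2$ (e.g.\ $I=x(x,y)$, $J=(x,y)$ satisfies (i) but not (ii)); your ``strip off common factors'' reduction only yields (ii) for the stripped ideals, so, like the paper's proof (which tacitly puts $a_0=b_0=0$, $a_r=d_1$, $b_s=d_2$), you should simply assume height $2$ and $\mu\ge 2$ from the start --- which is exactly what your use of $0,d\in A$, $0,e\in B$ and $|A|,|B|\ge2$ amounts to.
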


\begin{proof}
Let us assume that $I$ is generated in degree $d_1$ and $J$ is generated in degree $d_2$. Observe that
\begin{eqnarray}
\label{a}G(IJ)\supseteq x^{d_2}G(I)\cup G(J)y^{d_1}\\
x^{d_2}G(I)\cap G(J)y^{d_1}=\{x^{d_2}y^{d_1}\}.\label{b}
\end{eqnarray}

Proof of (\ref{a}): We first notice that the elements of $x^{d_2}G(I)$ and the elements of $G(J)y^{d_1}$ are monomials of degree $d_1+d_2$ in $IJ$. Since the
monomials of $x^{d_2}G(I)$ are pairwise different it follows  that $x^{d_2}G(I)\subseteq G(IJ)$. The same argument holds for $G(J)y^{d_1}$.

Proof of (\ref{b}): All elements of $x^{d_2}G(I)$ are of the form $x^{d_2+a}y^{d_1-a}$ and all elements of $G(J)y^{d_1}$ are of the form $x^by^{d_2-b+d_1}$ with
$b\leq d_2$. It follows $x^{d_2+a}y^{d_1-a}=x^by^{d_2-b+d_1}$ if and only if $b=d_2+a$. Since $b\leq d_2$, this is only possible if $b=d_2$ and $a=0$.

\medskip
Now we prove the equivalence of (i) and (ii).

(i)\implies (ii): Let $G(I)=\{x^{a_i}y^{d_1-a_i}\}_{i=0,\ldots,r}$ with $a_0=0<a_1<\ldots<a_r=d_1$ and  $G(J)=\{x^{b_j}y^{d_2-b_j}\}_{j=0,\ldots,s}$ with
$b_0=0<b_1<\ldots<b_s=d_2$. Since $\mu(IJ)=\mu(I)+\mu(J)-1$, it follows from (\ref{a}) that
$$G(IJ)=\{x^{a_i+d_2}y^{d_1-a_i}\}_{i=0,\ldots,r}\cup\{x^{b_j}y^{d_1+d_2-b_j}\}_{j=0,\ldots,s},$$
and we have
\begin{eqnarray}\label{sequence}
b_0<b_1<\ldots<b_s=d_2<d_2+a_1<\ldots<d_2+a_r=d_1+d_2.
\end{eqnarray}
Note that for $i=0,\ldots,s-1$,
$$x^{b_i+a_1}y^{d_1+d_2-(b_i+a_1)}=(x^{b_i}y^{a_2-b_i})(x^{a_1}y^{d_1-a_1})$$
belongs to $G(IJ)$.

Since $0<b_0+a_1<b_2+a_1<\ldots<b_{s-1}+a_1<d_2+a_1$, in comparison with (\ref{sequence}) shows that $b_j=b_{j-1}+a_1$ for $j=0,\ldots,s-1$. This implies that
$b_j=ja_1$ for $j=0,\ldots,s$. In particular, we have $b_1=a_1$. By symmetry we also have $a_i=ib_1$ for $i=0,\ldots,r$. Hence if we set $a=a_1=b_1$, we finally
get $b_j=ja$ for $j=1,\ldots,s$ and $a_i=ia$ for $i=0,\ldots,r$.

(ii) \implies (i): By assumption $IJ=(x^a,y^a)^{r+s}$. Therefore,
$$\mu(IJ)=r+s+1=(r+1)+(s+1)-1=\mu(I)+\mu(J)-1.$$
\end{proof}

\begin{Corollary}\label{yes}
Let $I$ and $J$ be equigenerated monomial ideals of height $2$ in $K[x,y]$.  Then  $\mu(I\sect J)<\mu(IJ)$.
\end{Corollary}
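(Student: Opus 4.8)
Let $I$ and $J$ be equigenerated monomial ideals of height $2$ in $K[x,y]$. Then $\mu(I\sect J)<\mu(IJ)$.

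The plan is to compare $\mu(I\sect J)$ with $\mu(IJ)$ via two inequalities that between them always give strict inequality. First I would invoke Proposition~\ref{museum}, which applies to arbitrary height-$2$ graded ideals in $K[x,y]$ and gives the upper bound $\mu(I\sect J)\leq \mu(I)+\mu(J)-1$. On the other side, Theorem~\ref{shalom} gives the lower bound $\mu(IJ)\geq \mu(I)+\mu(J)-1$ for equigenerated ideals. Chaining these, $\mu(I\sect J)\leq \mu(I)+\mu(J)-1\leq \mu(IJ)$, so the only thing that can go wrong is that equality holds in \emph{both} inequalities simultaneously. The whole content of the proof is to rule this out.

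So suppose for contradiction that $\mu(I\sect J)=\mu(IJ)$; then necessarily $\mu(IJ)=\mu(I)+\mu(J)-1$, and by Theorem~\ref{new} there exist positive integers $a,r,s$ with $I=(x^a,y^a)^r$ and $J=(x^a,y^a)^s$. I would then compute both sides explicitly for such ideals. Writing $L=(x^a,y^a)$, one has $I=L^r$, $J=L^s$, $IJ=L^{r+s}$, so $\mu(IJ)=r+s+1$; and $I\sect J=L^{\max\{r,s\}}$ since the powers of the single ideal $L$ form a descending chain, whence $\mu(I\sect J)=\max\{r,s\}+1$. The equality $\mu(I\sect J)=\mu(IJ)$ would force $\max\{r,s\}=r+s$, impossible for positive integers $r,s$. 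This contradiction shows $\mu(I\sect J)<\mu(IJ)$.

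The only step requiring a little care is the identification $L^r\cap L^s=L^{\max\{r,s\}}$; but this is immediate from $L^{r+1}\subset L^r$ for all $r$ (or, at the level of monomials, from the fact that $G(L^k)=\{x^{ia}y^{(k-i)a}:0\le i\le k\}$ and these sets are nested as $k$ varies only in the sense that membership is determined by total degree being a multiple of $a$ between the appropriate bounds — concretely $L^r\cap L^s$ consists of the monomials lying in both, which are exactly those of degree a multiple of $a$ lying in $L^{\max\{r,s\}}$). Everything else is a direct substitution into results already proved. I expect no real obstacle; the argument is essentially bookkeeping once Proposition~\ref{museum}, Theorem~\ref{shalom}, and Theorem~\ref{new} are in hand.
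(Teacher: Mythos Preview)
Your argument is correct and follows essentially the same route as the paper: chain Proposition~\ref{museum} with the lower bound on $\mu(IJ)$, then use Theorem~\ref{new} to identify the equality case as $I=(x^a,y^a)^r$, $J=(x^a,y^a)^s$, and finish by computing $\mu(I\sect J)=\max\{r,s\}+1<r+s+1=\mu(IJ)$. The only cosmetic difference is that you cite Theorem~\ref{shalom} for the inequality $\mu(IJ)\geq\mu(I)+\mu(J)-1$ whereas the paper bundles this into its reference to Theorem~\ref{new}; your parenthetical about the monomial description of $L^r\cap L^s$ is a bit garbled, but the one-line justification $L^{s}\subseteq L^{r}$ for $r\leq s$ that you give first is all that is needed.
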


\begin{proof}
By Proposition~\ref{museum} and Theorem~\ref{new} we have
\[
\mu(I\sect J)\leq \mu(I)+\mu(J)-1\leq \mu(IJ).
\]
Suppose $\mu(I\sect J)= \mu(IJ)$. Then  there exist positive integers $a$, $r$ and $s$. such that $I=(x^a,y^a)^r$ and $J=(x^a,y^a)^s$. We may assume that $r\leq
s$. Then $I\sect J=(x^a,y^a)^s$ and $IJ=(x^a,y^a)^{r+s}$.  It follows that $\mu(I\sect J)=s+1$ and $\mu(IJ)=r+s+1$, a contradiction.
\end{proof}

All assumptions given in Corollary~\ref{yes} are required to guarantee that the inequality $\mu(I\sect J)<\mu(IJ)$ is indeed strict. The strict inequality fails,
if the ideals $I$ and $J$ are not equigenerated. For example, if $I=(x^2,y)$ and $J=(x,y^2)$, then $I\cap J=(x^2,xy,y^2)$ and $IJ=(x^3,xy,y^3)$. Therefore
$\mu(I\cap J)=\mu(IJ)$. On the other hand, if both of the ideals are equigenerated, but not of height $2$, the strict inequality will fail again. For example,
let $I=(x^3,xy^2)$ and $J=(x^2y,y^3)$, then $I\cap J=(x^3y,x^2y^2,xy^3)$ and $IJ=(xy^5,x^3y^3,x^5y)$. Hence $\mu(I\cap J)=\mu(IJ)$.

We do not know whether $\mu(I\cap J)\leq \mu(IJ)$ for any two graded  ideals in $K[x,y]$,  even if both of them are equigenerated.



\begin{Corollary}\label{induction}
Let $I_1,\ldots,I_r$ be equigenerated monomial ideals in $S=K[x,y]$ with $\height I_j=2$ for $j=1,\ldots,r$.  Then
$$\mu(I_1\ldots I_r)\geq \sum_{j=1}^r\mu(I_j)-(r-1).$$
Equality holds if and only if either $r=1$ or $r>1$ and there exist positive integers $a$ and $s_j$ such that $I_j=(x^a,y^a)^{s_j}$ for $j=1,\ldots,r$.
\end{Corollary}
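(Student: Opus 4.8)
The claimed inequality is just the inequality of Corollary~\ref{kill} specialized to the $I_j$, so what really has to be proved is the characterization of equality. One direction is a direct computation: if $I_j=(x^a,y^a)^{s_j}$ for all $j$ with a common $a$, then $I_1\cdots I_r=(x^a,y^a)^{s}$ with $s=s_1+\cdots+s_r$, and hence $\mu(I_1\cdots I_r)=s+1=\sum_{j=1}^r(s_j+1)-(r-1)=\sum_{j=1}^r\mu(I_j)-(r-1)$; the case $r=1$ is trivial. For the reverse direction the plan is to induct on $r$. The base cases are $r=1$ (nothing to prove) and $r=2$, which is exactly Theorem~\ref{new}; here the hypothesis $\height I_j=2$ is what guarantees that $I_1,I_2$ are not principal, which is the situation covered by that theorem.

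For the inductive step, fix $r\geq 3$, assume equality holds for $I_1\cdots I_r$, and set $L=I_1\cdots I_{r-1}$. Since each $I_j$ is a monomial ideal of height $2$ in $K[x,y]$, it contains a power of $x$ and a power of $y$, hence so does $L$; thus $L$ is again an equigenerated monomial ideal of height $2$. Combining Theorem~\ref{shalom} applied to the product $L\cdot I_r$ with Corollary~\ref{kill} applied to $L$, we obtain the chain
\[
\sum_{j=1}^{r}\mu(I_j)-(r-1)=\mu(LI_r)\;\geq\;\mu(L)+\mu(I_r)-1\;\geq\;\Bigl(\sum_{j=1}^{r-1}\mu(I_j)-(r-2)\Bigr)+\mu(I_r)-1 ,
\]
whose first and last terms are equal. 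Hence both inequalities are equalities; in particular $\mu(LI_r)=\mu(L)+\mu(I_r)-1$ and $\mu(L)=\sum_{j=1}^{r-1}\mu(I_j)-(r-2)$.

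The second of these equalities, together with the induction hypothesis applied to $I_1,\dots,I_{r-1}$, yields positive integers $a'$ and $s_1,\dots,s_{r-1}$ with $I_j=(x^{a'},y^{a'})^{s_j}$ for $j<r$, so that $L=(x^{a'},y^{a'})^{s}$ with $s=s_1+\cdots+s_{r-1}\geq r-1\geq 2$. The first equality, via Theorem~\ref{new} applied to the pair $(L,I_r)$, yields positive integers $a''$, $p$, $s_r$ with $L=(x^{a''},y^{a''})^{p}$ and $I_r=(x^{a''},y^{a''})^{s_r}$. It remains to reconcile the two descriptions of $L$: counting minimal generators gives $s+1=\mu(L)=p+1$, so $s=p$, and comparing the degree in which $L$ is generated gives $a's=a''p=a''s$, so $a'=a''$. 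Writing $a=a'=a''$ we conclude $I_j=(x^a,y^a)^{s_j}$ for all $j=1,\dots,r$, which closes the induction.

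The argument is essentially bookkeeping once Theorem~\ref{new} is in hand; the one point that needs care is that the induction hypothesis (applied to $L$) and Theorem~\ref{new} (applied to $(L,I_r)$) each produce their own ``base exponent'', $a'$ and $a''$, and one must check these coincide --- which is forced because both control the same invariants, namely $\mu(L)$ and the degree in which $L$ is generated. One should also not forget to verify that $L$ inherits the hypotheses (equigenerated monomial of height $2$, hence non-principal), since Theorem~\ref{new} is only legitimately applied after that has been checked.
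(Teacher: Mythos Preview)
Your proof is correct and follows essentially the same inductive strategy as the paper. The one minor difference is in how you extract the form of $I_r$: you apply Theorem~\ref{new} directly to the pair $(L,I_r)$, whereas the paper instead regroups the product as $I_1(I_2\cdots I_r)$ and invokes the induction hypothesis a second time on $I_2,\dots,I_r$, obtaining $I_r=(x^{a'},y^{a'})^{s_r}$ and then using the overlap with $I_2,\dots,I_{r-1}$ to identify $a'=a$. Both routes require the same reconciliation of the two ``base exponents'', which you spell out explicitly (via $\mu(L)$ and the degree of generation of $L$); the paper leaves this step implicit. Your version is slightly tidier in that it also verifies that $L$ inherits the hypotheses needed for Theorem~\ref{new}.
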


\begin{proof}
We will prove the inequality by induction on $r$. The case $r=1$ is trivial. Now assume that the assertion holds for $r=k$, we will prove it for $r=k+1$. Clearly
$I_1\ldots I_{k+1}=(I_1\ldots I_k)I_{k+1}$. Then, by Theorem~\ref{shalom} and our induction hypothesis we have,
\begin{eqnarray*}
\mu(I_1\ldots I_{k+1})&\geq &\mu(I_1\ldots I_k)+\mu(I_{k+1})-1\geq \sum_{i=1}^k\mu(I_i)-(k-1)+\mu(I_{k+1})-1\\
&=&\sum_{i=1}^{k+1}\mu(I_i)-((k+1)-1),
\end{eqnarray*}
as desired.

Now we prove the second part of the corollary. For $r=1$ the assertion is trivial. For $r=2$ the statement follows from Theorem~\ref{new}(b). Now assume that the
statement holds for $r=k$ with $k\geq 2$.
By our assumption and Theorem~\ref{new}(a), we have
$$\sum_{j=1}^{k+1}\mu(I_j)-k=\mu((I_1\ldots I_k)I_{k+1})\geq\mu(I_1\ldots I_k)+\mu(I_{k+1})-1.$$
It follows that $\mu(I_1,\ldots,I_k)\leq \sum_{j=1}^k\mu(I_j)-(k-1)$. Since the opposite inequality  always holds, we get
$$\sum_{j=1}^{k}\mu(I_j)-(k-1)=\mu(I_1\ldots I_k).$$
 So by induction hypothesis there exist positive integers $a$, $s_j$ for $j=1,\ldots,k$ such that $I_j=(x^a,y^a)^{s_j}$. Similarly if we write $I_1\ldots
 I_{k+1}=I_1(I_2\ldots I_{k+1})$ then, by the same argument as above we see that there exists $s_{k+1}$ such that $I_{k+1}=(x^a,y^a)^{s_{k+1}}$.
\end{proof}

As an immediate consequence of Corollary~\ref{induction}, we have
\begin{Corollary}
\label{true}
Let $I\subset S=K[x,y]$ be an equigenerated monomial ideal with $\mu(I)=m$ and $\height I=2$. Then
\begin{enumerate}
\item[(a)] $\mu(I^k)\geq k(m-1)+1$;
\item[(b)] The following conditions are equivalent:
\begin{enumerate}
\item[(i)] $\mu (I^k)=k(m-1)+1$ for some $k\geq 2$.
\item[(ii)] $\mu (I^k)=k(m-1)+1$ for all $k\geq 2$.
\item[(iii)] There exist positive integers $a$ and $r$ such that $I=(x^a,y^a)^r$.
\end{enumerate}
\end{enumerate}
\end{Corollary}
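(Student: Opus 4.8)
The plan is to deduce all assertions directly from Corollary~\ref{induction} by specializing to $I_1=\cdots=I_k=I$. For part (a), I would take $r=k$ and each $I_j=I$ in the inequality of Corollary~\ref{induction}; since $\height I=2$, its hypotheses are satisfied, and the right-hand side becomes $\sum_{j=1}^k\mu(I)-(k-1)=km-(k-1)=k(m-1)+1$, which is exactly the claimed lower bound.

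For part (b), the implication (ii) $\implies$ (i) is trivial, so the content lies in (iii) $\implies$ (ii) and (i) $\implies$ (iii). For (iii) $\implies$ (ii): if $I=(x^a,y^a)^r$, then $I^k=(x^a,y^a)^{rk}$, whose unique minimal generating set is $\{x^{a(rk-j)}y^{aj}: 0\leq j\leq rk\}$, so $\mu(I^k)=rk+1$. On the other hand $m=\mu(I)=r+1$, hence $k(m-1)+1=kr+1=\mu(I^k)$, which gives (ii) for all $k\geq 2$.

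For (i) $\implies$ (iii): assume $\mu(I^k)=k(m-1)+1$ for some $k\geq 2$. Applying Corollary~\ref{induction} with $r=k$ and all $I_j=I$, its inequality becomes an equality. Since $k\geq 2$, the equality clause of Corollary~\ref{induction} yields positive integers $a$ and $s_1,\ldots,s_k$ with $I_j=(x^a,y^a)^{s_j}$; but $I_j=I$ for every $j$, so all the $s_j$ coincide with a common value $r$, and therefore $I=(x^a,y^a)^r$.

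The argument is purely formal once Corollary~\ref{induction} is available, so there is no genuine obstacle here. The only points that deserve a moment of care are: first, to observe that the integer $a$ in the equality statement of Corollary~\ref{induction} is shared by all the factors, so that setting every $I_j=I$ really does force a single $a$ and a single exponent $r$; and second, to carry out the elementary bookkeeping $\mu((x^a,y^a)^{rk})=rk+1$ together with $m-1=r$, which is what matches the numerics of (ii) and (iii).
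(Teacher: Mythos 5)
Your proposal is correct and follows exactly the route the paper intends: the paper states Corollary~\ref{true} as an immediate consequence of Corollary~\ref{induction}, and your specialization $I_1=\cdots=I_k=I$, together with the equality clause of that corollary for (i)$\implies$(iii) and the direct count $\mu((x^a,y^a)^{rk})=rk+1$ for (iii)$\implies$(ii), is precisely the intended argument. No gaps.
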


A result corresponding to (b)(i)\iff (ii) is no longer valid for more than two variables. In other words, if $I\subset K[x_1,\ldots,x_n]$ is a monomial ideal of
height $n$, then $\mu(I^2)=n\mu(I)-{n \choose 2}$ does not imply that $I=(x_1^a,\ldots,x_n^a)^r$ for some positive integers $a$ and $r$. For example, if
$I=(x,y,z)^3$, then $\mu(I)=10$ and  $\mu(I^2)=28>3\cdot10-{3\choose 2}=27$.

\medskip
The following corollary gives another characterization for ideal whose powers achieve the lower bound for the number of generators.

\begin{Corollary}
\label{truered}
Let $I\subset S=K[x,y]$ be a monomial ideal generated in  degree  $d$  with $\mu(I)=m$ and $\height I=2$. Let $J=(x^d,y^d)$.  Then the following conditions are
equivalent:
\begin{enumerate}
\item[(i)] $\mu (I^k)=k(m-1)+1$ for some $k\geq 2$.
\item[(ii)] $I^2=JI$.
\end{enumerate}
\end{Corollary}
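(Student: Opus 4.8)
The plan is to identify condition (i) with the numerical equality $\mu(I^k)=k(m-1)+1$ and translate it, via Corollary~\ref{true}, into the structural statement that $I=(x^a,y^a)^r$ for suitable positive integers $a,r$; then to check directly that this structural description is equivalent to the reduction equality $I^2=JI$ with $J=(x^d,y^d)$. First I would record that, because $I$ is generated in degree $d$, the ideal $J=(x^d,y^d)$ is a homogeneous ideal of height $2$ with $J\subseteq I$, and that $J$ is generated by a regular sequence, so it is a natural candidate for a minimal reduction of $I$; in particular $J$ and $I$ have the same integral closure and the same analytic spread $\ell(I)=2$.

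For the implication (i)$\implies$(ii): by Corollary~\ref{true}, condition (i) forces $I=(x^a,y^a)^r$ for some positive integers $a,r$, and then $d=ar$, so $J=(x^{ar},y^{ar})$. I would then compute $JI=(x^{ar},y^{ar})(x^a,y^a)^r$ and $I^2=(x^a,y^a)^{2r}$, and verify that the two monomial ideals coincide: every generator $x^{ia}y^{(2r-i)a}$ of $(x^a,y^a)^{2r}$ with $0\le i\le 2r$ is divisible by $x^{ar}$ if $i\ge r$ and by $y^{ar}$ if $i\le r$, hence lies in $JI$ after dividing out the appropriate power; conversely $JI\subseteq I^2$ is clear since $J\subseteq I$. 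Thus $I^2=JI$.

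For the converse (ii)$\implies$(i): if $I^2=JI$, then $J$ is a reduction of $I$ with reduction number one, so $I^{k+1}=J^kI$ for all $k\ge 1$ (an easy induction: $I^{k+2}=I\cdot I^{k+1}=I\cdot J^kI=J^k I^2=J^{k+1}I$). Since $J=(x^d,y^d)$, the ideal $J^kI$ is generated by the products $x^{id}y^{(k-i)d}\,u$ with $0\le i\le k$ and $u\in G(I)$; as $\deg u=d$ and these all have degree $(k+1)d$, I would count the distinct monomials among them. Writing $G(I)=\{x^{c_0}y^{d-c_0},\dots\}$ with $0=c_0<c_1<\dots<c_m=d$ (using $\height I=2$), the monomial $x^{id}y^{(k-i)d}x^{c_j}y^{d-c_j}$ has $x$-exponent $id+c_j$; the map $(i,j)\mapsto id+c_j$ on $\{0,\dots,k\}\times\{0,\dots,m\}$ is injective onto a set of size exactly $k(m-1)+1$ precisely when the "blocks" $\{id+c_j: j\}$ for consecutive $i$ overlap in exactly one point, which by the argument in the proof of Theorem~\ref{new} happens iff the $c_j$ are in arithmetic progression, i.e. $I=(x^a,y^a)^r$; but actually I only need the cardinality bound. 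The cleanest route is: $\mu(I^{k+1})=\mu(J^kI)\le (k+1)+k(m-1)=k(m-1)+1$ for $k$ large, because $J^kI$ is a sum of $k+1$ "shifted copies" of $G(I)$ each contributing at most $m$ new generators but with successive copies sharing at least the pure-power generators, giving the telescoping count $m+k(m-1)$. Combined with the reverse inequality in Corollary~\ref{true}(a), this yields $\mu(I^{k+1})=(k+1)(m-1)+1$ for all large $k$, hence for \emph{some} $k\ge 2$, which is (i).

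The main obstacle will be the counting argument in (ii)$\implies$(i): one must argue carefully that $\mu(J^kI)\le k(m-1)+1$, i.e. that passing from $G(J^{k-1}I)$ to $G(J^kI)=x^dG(J^{k-1}I)\cup y^dG(J^{k-1}I)$ adds at most $m-1$ new generators. This is exactly the inductive step in the proof of Theorem~\ref{shalom} applied to $J^{k-1}I$ and $J$ (noting $\mu(J)=2$), together with the observation that the two shifted copies $x^dG(J^{k-1}I)$ and $y^dG(J^{k-1}I)$ meet in exactly one monomial — the analogue of identities \eqref{a} and \eqref{b}. I would invoke Theorem~\ref{shalom} directly: $\mu(J^kI)=\mu(J\cdot J^{k-1}I)\le \mu(J)+\mu(J^{k-1}I)-1=\mu(J^{k-1}I)+1$, whence by induction $\mu(J^kI)\le \mu(I)+k=m+k$, and since $I^{k+1}=J^kI$ we get $\mu(I^{k+1})\le m+k=(k+1)(m-1)+1$; equality with Corollary~\ref{true}(a) finishes the proof.
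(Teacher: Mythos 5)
Your (i)$\Rightarrow$(ii) direction is correct, and it takes a genuinely different route from the paper: you pass through the structural characterization $I=(x^a,y^a)^r$ of Corollary~\ref{true}(b)(iii) and then verify $I^2=JI$ by a direct computation with the generators, whereas the paper stays numerical, using only that (i) forces $\mu(I^2)=2(m-1)+1$ together with the containment (\ref{a}) and the overlap count (\ref{b}) to conclude $G(I^2)=x^dG(I)\cup G(I)y^d$ and hence $I^2=x^dI+Iy^d=JI$. Both are legitimate; the paper's version avoids invoking the classification coming from Theorem~\ref{new}.

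The genuine problem is in (ii)$\Rightarrow$(i), in the step you present as the cleanest route. Theorem~\ref{shalom} asserts $\mu(JL)\geq \mu(J)+\mu(L)-1$, a \emph{lower} bound; you use it as the upper bound $\mu(J\cdot J^{k-1}I)\leq \mu(J)+\mu(J^{k-1}I)-1$, and that inequality is false in exactly this setting: for $I=(x,y)^2$ and $J=(x^2,y^2)$ one has $I^2=JI$, yet $\mu(JI)=5>\mu(I)+1=4$. The accompanying arithmetic also does not close, since $m+k\neq (k+1)(m-1)+1$ unless $m=2$. What does work is the other count you sketch in passing: writing $I^{k+1}=J^kI$ as the union of the $k+1$ shifted copies $x^{id}y^{(k-i)d}G(I)$, $0\leq i\leq k$, each of size $m$, with consecutive copies sharing at least the monomial $x^{(i+1)d}y^{(k-i)d}$ (because $x^d,y^d\in G(I)$, as $I$ has height $2$ and is generated in degree $d$); this gives $\mu(I^{k+1})\leq (k+1)m-k=(k+1)(m-1)+1$, and equality follows from Corollary~\ref{true}(a). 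But even this induction is unnecessary: the paper settles the implication already at $k=2$, since $I^2=JI=x^dI+Iy^d$ gives $G(I^2)\subseteq x^dG(I)\cup G(I)y^d$, and (\ref{a}) together with (\ref{b}) shows this union equals $G(I^2)$ and has exactly $2m-1$ elements, i.e. (i) holds with $k=2$. So replace the appeal to Theorem~\ref{shalom} by the explicit block count, or simply by the $k=2$ argument; as written, that step fails.
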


\begin{proof}
(i)\implies (ii): By Corollary~\ref{true}, our hypothesis implies that $\mu (I^2)=2(m-1)+1$. By (\ref{a}) we have $G(I^2)\supseteq x^{d}G(I)\cup G(J)y^{d}$, and
therefore (\ref{b}) implies that $| x^{d}G(I)\cup G(J)y^{d}|=2(m-1)+1$. It follows that $I^2=x^dI+Iy^d=JI$.

(ii)\implies (i): The assumption implies that $I^2=x^dI+Iy^d$, and hence $G(I^2)\subseteq x^{d}G(I)\cup G(I)y^{d}$. By  (\ref{a}),  $G(I^2)\supseteq
x^{d}G(I)\cup G(I)y^{d}$, and hence equality holds. Thus together with  (\ref{b}) it follows that $\mu(I^2)=2(m-1)+1$.
\end{proof}

Corollary~\ref{truered} says that $\mu (I^k)=k(m-1)+1$ for some $k\geq 2$ if and only if the reduction number of $I$ is equal to $1$.

\medskip
We conclude this section with a result which says that the powers of equigenerated ideals of same degree can be exchanged with their sums.

\begin{Proposition}\label{cold}
Let $I_1,\ldots,I_r$ be equigenerated monomial ideals in $S=K[x,y]$, all generated in  degree $d$ such that $\height I_{j}=2$ for all $j=1,\ldots ,r$ and
$I_1+\ldots+I_r=(x,y)^d$. Then $(I_1+\ldots+I_r)^k=I_1^k+\ldots+I_r^k$ for all $k\geq 1$.
\end{Proposition}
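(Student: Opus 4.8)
The plan is to reduce the statement to an elementary fact about sumsets of subsets of $\{0,1,\dots,d\}$. The inclusion $I_1^k+\dots+I_r^k\subseteq (I_1+\dots+I_r)^k$ is trivial, and since $I_1+\dots+I_r=(x,y)^d$ we have $(I_1+\dots+I_r)^k=(x,y)^{dk}$; so everything reduces to proving $(x,y)^{dk}\subseteq I_1^k+\dots+I_r^k$. Both sides are monomial ideals, and $(x,y)^{dk}$ is generated by the monomials $x^cy^{dk-c}$ with $0\le c\le dk$, so it suffices to show that each such monomial lies in $I_j^k$ for some $j$.

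To organize this, encode each ideal combinatorially: as $I_j$ is generated in degree $d$, write $G(I_j)=\{x^ay^{d-a}:a\in A_j\}$ for a unique subset $A_j\subseteq\{0,1,\dots,d\}$. The hypothesis $\height I_j=2$ means $\sqrt{I_j}=(x,y)$, which forces $x^d,y^d\in I_j$, i.e.\ $\{0,d\}\subseteq A_j$; and $I_1+\dots+I_r=(x,y)^d$ translates into $\bigcup_{j=1}^rA_j=\{0,1,\dots,d\}$. Since $I_j=(x^ay^{d-a}:a\in A_j)$, the power $I_j^k$ is the monomial ideal generated by the $x^cy^{dk-c}$ with $c$ ranging over the $k$-fold sumset $kA_j:=\{a_1+\dots+a_k:a_1,\dots,a_k\in A_j\}$. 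Hence $I_1^k+\dots+I_r^k=(x,y)^{dk}$ is equivalent to the claim
\[
\bigcup_{j=1}^r kA_j=\{0,1,\dots,dk\},
\]
whose inclusion $\subseteq$ is clear. For $\supseteq$, take $c$ with $0\le c\le dk$ and write $c=qd+s$ with $0\le s\le d-1$ and $0\le q\le k$ (so $q=k$ exactly when $c=dk$, forcing $s=0$). If $q=k$, then $c$ is the sum of $k$ copies of $d$ and $d\in A_j$ for all $j$. If $q<k$, pick $j$ with $s\in A_j$ (possible as $\bigcup_jA_j\supseteq\{0,\dots,d-1\}$); then
\[
c=\underbrace{d+\dots+d}_{q}+\,s\,+\underbrace{0+\dots+0}_{k-q-1}
\]
exhibits $c$ as a sum of $q+1+(k-q-1)=k$ elements of $A_j$, using $0,d\in A_j$. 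In both cases $c\in kA_j$.

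I do not expect a genuine obstacle here: the argument is entirely elementary once the translation into sumsets is made. The one point that must be handled correctly is the equivalence $\height I_j=2\iff\{0,d\}\subseteq A_j$ (equivalently $x^d,y^d\in I_j$), since it is precisely the membership $0,d\in A_j$ that lets one pad a partial sum up to exactly $k$ summands in the last displayed identity. The remaining ingredients — that $I_j^k$ is the monomial ideal whose generator exponents form the $k$-fold sumset of $A_j$, and that a monomial ideal contained in $(x,y)^{dk}$ and containing all degree-$dk$ monomials of $(x,y)^{dk}$ must equal it — are routine.
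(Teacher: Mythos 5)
Your proof is correct and rests on the same idea as the paper's: since $\height I_j=2$ and $I_j$ is generated in degree $d$ force $x^d,y^d\in I_j$ for every $j$, an arbitrary monomial of degree $kd$ factors as a product of copies of $x^d$ and $y^d$ with one residual monomial of degree $d$, which lies in some $I_j$ by the hypothesis $I_1+\cdots+I_r=(x,y)^d$. The only difference is presentational: the paper runs an induction on $k$, peeling off a single factor $x^d$ or $y^d$ at each step, whereas you unroll that induction into the direct sumset decomposition $c=qd+s$.
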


\begin{proof}
We will prove the statement by induction on $k$. For $k=1$ the assertion is trivial. Let $k\geq 2$. Let $x^ay^{kd-a}\in (x,y)^{kd}$, and let us first assume
that $a\geq d$. We have $x^ay^{kd-a}=x^d(x^{a-d}y^{kd-a})$. By induction hypothesis $I_1^{k-1}+\ldots+I_r^{k-1}=(x,y)^{(k-1)d}$. Therefore
$x^{a-d}y^{kd-a}\in I_j^{k-1}$ for some $j=1,\ldots,r$. Moreover, $x^d\in I_j$. This implies that $x^ay^{kd-a}\in I_j^{k}$. On the other hand, if $a<d$,
then $kd-a\geq d$. Therefore, $x^ay^{kd-a}=y^d(x^ay^{(k-1)d-a})$. By similar argument as before we see that $x^ay^{kd-a}\in I_j^{k}$, for some $j$.
\end{proof}

\begin{Examples}
{\em (a) Proposition~\ref{cold} is no longer valid if $\height I_{j}\neq 2$ for some $j$. For example, let $I_1=(x^2,y^2)$ and $I_2=(xy)$. Then
$I_1+I_2=(x,y)^2$, but $I_1^2+I_2^2\neq(x,y)^4$.

(b) Proposition~\ref{cold} is no longer valid in a polynomial ring with more than two variables, as the following example shows.
Let $$I=(x^5,y^5,z^5,xyz^3,xy^2z^2,x^2y^3,x^2z^3,x^3y^2,x^3yz,x^3z^2)$$ and $$J=(x^5,y^5,z^5,x^2y^2z,x^2yz^2,xz^4,xy^3z,yz^4,y^2z^3,y^3z^2,y^4z, xy^4, x^4z,
x^4y).$$

Then $I+J=(x,y,z)^5$, but $x^3y^3z^4\notin I^2+J^2$.
}
\end{Examples}

\section{Monomial ideals in $K[x,y]$ which are not necessarily equigenerated}\label{four}

In the previous sections we assumed that $I$ is equigenerated. For such ideals we had that $\mu(I^2)\geq 2\mu(I)-1$. But we have seen in Example~\ref{failure}
that this inequality is in general  no longer true if $I$ is not equigenerated. We now give conditions on the exponents on the generators of $I$ which guarantee
that $\mu(I^2)=2\mu(I)-1$.

\medskip
For the rest of this section we assume $I\subset S=K[x,y]$ is a monomial ideal with $G(I)=\{u_1,\ldots,u_m\}$, where $u_i=x^{a_i}y^{b_i}$ for $i=1,\ldots ,m$
with  $a_1>a_2>\ldots>a_m=0$ and $0=b_1<b_2<\ldots<b_m$.

The generators of $I^2$ can be displayed in the following triangle $T(I)$. Of course this set of  monomials displayed in this triangle is usually not minimal set
of generators.
\[
\begin{array}{cccccc}
u_1^2&u_1u_2&u_1u_3&\ldots& u_1u_{m-1}&u_1u_m\\
 &u_2^2&u_2u_3&\ldots&u_2u_{m-1}&u_2u_m\\
&& u_3^2&\ldots&u_3u_{m-1}&u_3u_m\\
&&& \ddots& \vdots&\vdots\\
&&&& u_{m-1}^2&u_{m-1}u_m\\
&&&&& u_{m}^2
\end{array}
\]
The multi-degrees of the generators in this diagram are displayed in the next diagram.
 \[
\begin{array}{cccccc}
(2a_1,0)&(a_1+a_2,b_2)&\ldots& (a_1+a_{m-1},b_{m-1})&(a_1,b_m)\\
 &(2a_2,2b_2)&\ldots&(a_2+a_{m-1},b_2+b_{m-1})&(a_2,b_2+b_m)\\
&& \ddots& \vdots&\vdots\\
&&& (2a_{m-1},2b_{m-1})&(a_{m-1},b_{m-1}+b_m)\\
&&&& (0,2b_m)
\end{array}
\]
For $k=2,\ldots,2m$ we call the set of monomials $$D_k=\{u_iu_j\:\;  1\leq i\leq j\leq m\text{ and } i+j=k\}$$   the $k$th diagonal of $T(I)$.
It is hard to predict where the minimal generators  of  $I^2$ are distributed in $T(I)$.  We will mark the  elements of $G(I^2)$ in $T(I)$  by bold letters.

\begin{Example}\label{diagonal}
{\em
(1) Let $G(I)=\{x^7,x^6y^2,x^5y^3,x^3y^4,y^7\}$. Then we have the following triangle for the generators of $I^2$:
 \[
\begin{array}{cccccc}
{\bf(14,0)}&{\bf(13,2)}&{\bf(12,3)}& {\bf(10,4)}&{\bf(7,7)}\\
 &(12,4)&(11,5)&{\bf(9,6)}&(6,9)\\
&& (10,6)&(8,7)&{\bf(5,10)}\\
&&& {\bf (6,8)}&{\bf(3,11)}\\
&&&& {\bf(0,14)}
\end{array}
\]
}
\end{Example}

As one can see, the elements in the diagonal $D_7$ whose multi-degrees are  $(8,7)$ and $(6,9)$ do not belong to $G(I^2)$.

\medskip
(2) Let $G(I)=\{x^7,x^6y^4,x^4y^5,x^3y^6,y^8\}$. Then $T(I)$ is
 \[
\begin{array}{cccccc}
{\bf(14,0)}&{\bf(13,4)}&{\bf(11,5)}& {\bf(10,6)}&{\bf(7,8)}\\
 &(12,8)&(10,9)&(9,10)&(6,12)\\
&& (8,10)&(7,11)&{\bf(4,13)}\\
&&& {\bf(6,12)}&{\bf(3,14)}\\
&&&& {\bf(0,16)}
\end{array}
\]
This example shows that in general it is not the case that in each row of $T(I)$  there is an element  of $G(I^2)$. The elements of the second row, whose
elements have multi-degree  $(12,8),(10,9), (9,10)$ and $(6,12)$ do not belong to $G(I^2)$.

For each $1\leq i\leq j\leq m$, we can consider an area $S_{ij}$ for the element  $u_iu_j$ in $T(I)$ which we call the  {\it safe area} for $u_iu_j$. It has the
property that for all  $u_ku_l\in S_{ij}$ one has: $u_ku_l$ does not divide $u_iu_j$,  and $u_iu_j$ does not divide  $u_ku_l$. It is easy to see that
\[
S_{ij}=\{u_ku_l\:\; \text{$k<i$ and $l\leq j$, or $k=i$ and $l<j$, or $k>i$ and $j\leq l$}\}.
\]

In Figure~\ref{safe}, the shadowed area shows the safe area of $u_iu_j$.

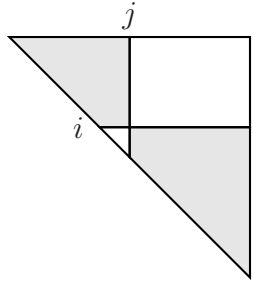
\begin{figure}[hbt]
\begin{center}
\psset{unit=0.4cm}
\begin{pspicture}(8,0)(3,9)
\pspolygon[style=fyp,fillcolor=light](0,8)(4,8)(4,5)(3,5)
\pspolygon[style=fyp,fillcolor=light](4,5)(8,5)(8,0)(4,4)
\pspolygon(4,8)(8,8)(8,5)(4,5)
\pspolygon(3,5)(4,5)(4,4)
\rput(2.3,5){$i$}
\rput(4,8.7){$j$}
\end{pspicture}
\caption{Safe Area}\label{safe}
\end{center}
\end{figure}

\medskip
We say that a sequence of integers $c_1,c_2,\ldots,c_m$ is {\em concave } resp.\ {\em convex}, if $2c_i\leq c_{i-1}+c_{i+1}$ resp.\ $2c_i\geq c_{i-1}+c_{i+1}$
for $i=2,\ldots,m-1$. The first  inequalities imply that
\begin{eqnarray}\label{Iraqidolmeh}
c_{i+k}+c_{j-k}\leq c_i+c_j \quad\text{for all $k$ with $0\leq k\leq (j-i)/2$.}
\end{eqnarray}
We obtain  similar inequalities in the convex  case.

\medskip
In contrast to Example~\ref{diagonal}(1), we have  $D_k\sect G(I^2)\neq \emptyset$  for $k=2,\ldots,2m$ for monomial ideals  $I$ as described in the next
result.

\begin{Proposition}
\label{convex}
Let ${\bf{a}}:a_1>a_2>\cdots >a_m=0$ and ${\bf{b}}:0=b_1<b_2<\cdots <b_m$ be integer sequences, and let $G(I)=\{x^{a_i}y^{b_i}\}_{i=1,\ldots,m}$. Suppose that
the sequences ${\bf{a}}$ and ${\bf{b}}$ either are both  convex or both  concave. Then each diagonal of $T(I)$ contains exactly one generator of $I^2$. In
particular, $\mu(I^2)=2\mu(I)-1$. More precisely,  we have $$G(I^2)=\{u_1^2,\ldots,u_m^2,u_1u_2,u_2u_3,\ldots,u_{m-1}u_m\},$$
if ${\bf{a}}$ and ${\bf{b}}$ are concave, and $$G(I^2)=\{u_1^2, u_1u_2,\ldots ,u_1u_m, u_2u_m,\ldots ,u_m^2\},$$ if ${\bf{a}}$ and ${\bf{b}}$ are convex.
\end{Proposition}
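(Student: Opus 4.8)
The strategy is to show that the monomials lying in the claimed set are minimal generators, and that every other monomial $u_iu_j$ in $T(I)$ is divisible by one of them. Since $T(I)$ has exactly $2m-1$ diagonals $D_2,\dots,D_{2m}$, and the listed set has exactly $2m-1$ elements (namely the $m$ squares $u_t^2$ together with the $m-1$ products $u_tu_{t+1}$ in the concave case, or the $m$ products $u_1u_t$ together with the $m-1$ products $u_tu_m$ in the convex case), it suffices to prove that in each diagonal the listed element is a minimal generator of $I^2$ and every other element of that diagonal is a multiple of some element of $G(I)^2$ already accounted for. I would treat the concave case in detail; the convex case follows by the symmetry $x\leftrightarrow y$, which interchanges the roles of $\ab$ and $\bb$ and reverses the order of the generators.

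First I would set up the diagonal bookkeeping: $D_k=\{u_iu_j: i+j=k,\ 1\le i\le j\le m\}$, and in the concave case the distinguished element of $D_k$ is $u_{\lceil k/2\rceil-1}u_{\lceil k/2\rceil}$ if $k$ is odd and $u_{k/2}^2$ if $k$ is even (i.e.\ the ``most central'' pair in the diagonal). The key inequality is~(\ref{Iraqidolmeh}): for a concave sequence $c_1,\dots,c_m$ one has $c_{i+\ell}+c_{j-\ell}\le c_i+c_j$ whenever $0\le \ell\le (j-i)/2$; applying this to both $\ab$ and $\bb$ shows that within a fixed diagonal $D_k$, as one moves toward the center (decreasing $|i-j|$), \emph{both} coordinates of the multidegree weakly decrease. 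Hence the central element of $D_k$ divides every other element of $D_k$. That already proves that a single minimal generator of $I^2$ is enough to cover each diagonal, and that it must be (a multiple dividing) the central one — so no non-central element of any diagonal is a minimal generator.

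It remains to check that the central element of each diagonal is genuinely a minimal generator, i.e.\ is not divisible by the central element of another diagonal (nor by any $u_su_t$ from another diagonal, but by the previous paragraph it suffices to rule out the central elements of other diagonals). Here I would argue directly with the exponents: suppose the central element of $D_k$ is divisible by the central element of $D_{k'}$ with $k'\neq k$. Using the strict monotonicity $a_1>\dots>a_m$ and $b_1<\dots<b_m$ together with the concavity inequalities, one derives a contradiction — essentially, moving to a different diagonal strictly increases one of the two coordinates of the central multidegree, so no central multidegree divides another. For instance, comparing $u_t^2$ (multidegree $(2a_t,2b_t)$) with $u_{t+1}^2$ (multidegree $(2a_{t+1},2b_{t+1})$): since $a_{t+1}<a_t$ the second does not divide the first, and since $b_t<b_{t+1}$ the first does not divide the second; the odd–even comparisons are handled the same way. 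This establishes $G(I^2)=\{u_1^2,\dots,u_m^2,u_1u_2,\dots,u_{m-1}u_m\}$ in the concave case, hence $\mu(I^2)=2m-1=2\mu(I)-1$.

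The main obstacle I anticipate is purely organizational rather than conceptual: one must verify carefully that the chain of inequalities from~(\ref{Iraqidolmeh}), applied simultaneously to the two sequences $\ab$ and $\bb$, really does force \emph{monotone} (not merely eventually monotone) behavior of the multidegree along a diagonal, so that the central element truly divides all the others and is itself undominated; the parity split (diagonals with an odd vs.\ even index, where the ``center'' is a product $u_tu_{t+1}$ resp.\ a square $u_t^2$) has to be tracked consistently throughout. Once the divisibility direction is pinned down correctly, the counting is immediate and the ``in particular'' statement $\mu(I^2)=2\mu(I)-1$ drops out.
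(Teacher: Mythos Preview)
Your approach for the concave case matches the paper's: use inequality~(\ref{Iraqidolmeh}) to show that the central element of each diagonal $D_k$ divides every other element of $D_k$, then verify that the $2m-1$ central elements are pairwise incomparable under divisibility. The paper phrases the second step via the ``safe area'' formalism, while you compare exponents directly; these are equivalent.

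There is one genuine slip. The symmetry $x\leftrightarrow y$ does \emph{not} interchange the convex and concave cases. Swapping $x$ and $y$ and reindexing turns $\mathbf{a}$ into the reversed sequence $(b_m,\ldots,b_1)$ and $\mathbf{b}$ into $(a_m,\ldots,a_1)$; but reversing a convex sequence gives a convex sequence (the inequality $2c_i\geq c_{i-1}+c_{i+1}$ is invariant under $i\mapsto m+1-i$), so this symmetry sends convex ideals to convex ideals and concave to concave. The convex case must therefore be handled by a parallel argument, not by reduction: there the inequalities in~(\ref{Iraqidolmeh}) reverse direction, so the \emph{extreme} element of each diagonal (namely $u_1u_{k-1}$ for $k\leq m+1$ and $u_{k-m}u_m$ for $k\geq m+1$) divides all the others in $D_k$, yielding $G(I^2)=\{u_1u_j\}_{j=1}^m\cup\{u_iu_m\}_{i=2}^m$ as claimed. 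The paper likewise says only that the convex case ``is treated similarly,'' not that it follows by symmetry.
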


\begin{proof}
Let us assume that ${\bf{a}}$ and ${\bf{b}}$ are concave  sequences. The case that ${\bf{a}}$ and ${\bf{b}}$ are convex sequences is treated similarly.

Let $1\leq i\leq j\leq m$. In the first step we show that $u_iu_j$ is divisible by $u_{(i+j)/2}^2$ if $i+j$ is even, and $u_iu_j$ is divisible by
$u_{(i+j-1)/2}u_{(i+j+1)/2}$ if $i+j$ is odd.

Indeed, if $i+j$ is even, then $u_{(i+j)/2}^2=x^{2a_{(i+j)/2}}y^{2b_{(i+j)/2}}$. Since the sequences  ${\bf{a}}$ and ${\bf{b}}$ are concave, (\ref{Iraqidolmeh})
implies that $2a_{(i+j)/2}\leq a_i+a_j$ and $2b_{(i+j)/2}\leq b_i+b_j$. This implies that it follows that $u_{(i+j)/2}^2$ divides $u_iu_j$. If $i+j$ is odd, then
by using again the concavity condition, (\ref{Iraqidolmeh}) implies that $a_{(i+j-1)/2}+a_{(i+j+1)/2}\leq a_i+a_j$ and $b_{(i+j-1)/2}+b_{(i+j+1)/2}\leq b_i+b_j$.
It follows that $u_{(i+j-1)/2}u_{(i+j+1)/2}$ divides $u_iu_j$. Thus we have shown that
$$
G(I^2)\subseteq \{u_1^2,\ldots,u_m^2,u_1u_2,u_2u_3,\ldots,u_{m-1}u_m\}.
$$
Each monomial $v$ of the set ${\mathcal S}$ on the righthand side has a property that all the other monomials of ${\mathcal S}$ are in the safe area of $v$. This
shows that $G(I^2)={\mathcal S}$.
\end{proof}

Another class of ideals $I$ for which  the equation $\mu(I^2)=2\mu(I)-1$ holds, is the class of lexsegment  ideals.  Recall that an ideal $I\subset K[x_1,\ldots,x_n]$ is
called a {\em lexsegment ideal}, if for all monomial $u\in I$  and all monomials $v$ with $\deg v=\deg u$ and $v>u$ in  the lexicographical order, it follows
that $v\in I$.

The lexsegment ideals in $K[x,y]$ can be described as follows. Let $d\geq a\geq 1$ be integers, and let $I_{d,a}$ be the lexsegment ideal with
$$G(I_{d,a})=(x^{d-i}y^i)_{i=0,\ldots,a}.$$

\begin{Lemma}\label{Theresa May}
Let $I\subset K[x,y]$ be a monomial ideal. The following conditions are equivalent:
\begin{enumerate}
\item[(i)] $I$ is a lexsegment ideal;
\item[(ii)] There exist integers $0\leq a_1<a_2<\ldots < a_r$ such that $$I=I_{d,a_1}+I_{d+1,a_2}+\ldots+I_{d+r-1,a_r};$$
\item[(iii)] There exist  integers $0<b_1<\ldots<b_s$ such that $$I=(x^d,x^{d-1}y^{b_1},\ldots, x^{d-s}y^{b_s}).$$
\end{enumerate}
\end{Lemma}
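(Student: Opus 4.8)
The plan is to prove the equivalence of (i), (ii) and (iii) by showing (i) $\Leftrightarrow$ (ii) and (i) $\Leftrightarrow$ (iii), and then to observe that (ii) and (iii) are two different bookkeeping descriptions of the same staircase. Throughout I will work with the fact that a monomial ideal $I\subset K[x,y]$ is lexsegment precisely when it is stable under replacing any generator $x^{c}y^{e}$ in degree $d'$ by the lexicographically larger monomials $x^{c'}y^{d'-c'}$ with $c'>c$ of the same degree; equivalently, writing $I$ via its unique minimal monomial generators $x^{a_i}y^{b_i}$ with $a_1>a_2>\cdots>a_m\ge 0$ and $0\le b_1<b_2<\cdots<b_m$, lexsegment-ness is a condition relating consecutive generators.

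First I would prove (i) $\Rightarrow$ (iii). Suppose $I$ is a lexsegment ideal and let $d=o(I)$ be the least degree of a generator. The degree-$d$ component $I_d$ is a lexsegment in the $(d+1)$-dimensional space of degree-$d$ monomials, so it consists of the top monomials $x^d,x^{d-1}y,\ldots,x^{d-s}y^{s}$ for some $s\ge 0$; in particular $x^d\in G(I)$, and the only minimal generator in degree $d$ is $x^{d-s}y^{s}$ (the others are not minimal). I claim that every minimal generator of $I$ has $x$-exponent of the form $d-i$ for some $i\ge 0$ and that consecutive ones satisfy $b_{i+1}>b_i$ in a way that forces the shape in (iii). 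The key point: if $x^{a}y^{b}\in G(I)$ with $a<d$, lexsegment-ness applied in degree $a+b$ shows $x^{a+1}y^{b-1}\in I$; since $x^{a}y^{b}$ is a minimal generator, $x^{a+1}y^{b-1}$ is not a multiple of it, so it lies in $I$ because it is a multiple of some generator of strictly smaller degree, and climbing up the $x$-exponent this way one reaches $x^d$. Running this argument carefully shows that the minimal generators are exactly $x^d,x^{d-1}y^{b_1},\ldots,x^{d-s}y^{b_s}$ for a strictly increasing sequence $0<b_1<\cdots<b_s$ (the requirement $b_1>0$ coming from $x^d\in G(I)$), which is (iii). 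Conversely, for (iii) $\Rightarrow$ (i) I would check directly that an ideal of the displayed form is closed under passing to lex-larger monomials of the same degree: a monomial $x^py^q\in I$ is a multiple of some $x^{d-j}y^{b_j}$ (set $b_0=0$), and any lex-larger $x^{p'}y^{q'}$ of the same degree has $p'>p\ge d-j$, hence $p'\ge d-j'$ for the largest $j'\le j$ with $d-j'\le p'$, and one verifies $q'\ge b_{j'}$ using $q'-q=p-p'<0$ together with $q\ge b_j\ge b_{j'}$; so $x^{p'}y^{q'}\in I$.

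For (i) $\Leftrightarrow$ (ii) I would reorganize the same staircase ``by degree slices'' instead of ``by $x$-exponent''. Given (iii), group the generators by degree: the generators of degree $d+t$ are those $x^{d-j}y^{b_j}$ with $b_j-j=t$. Setting $a_{t+1}$ to be the largest $j$ occurring with $b_j-j=t$ (equivalently, the largest index $i$ with $b_i=t+(d-\text{its }x\text{-exponent})$ appropriately reindexed), one checks that the portion of $I$ generated in degree $d+t$ together with $x^{d+t-i}y^{i}$ for $i\le a_{t+1}$ is exactly $I_{d+t,a_{t+1}}$, and that the $a_t$ form a strictly increasing sequence $0\le a_1<a_2<\cdots<a_r$ (strictness because a new generator appears in each successive degree up to the top, after deleting redundant degrees one relabels). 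Summing these lexsegment pieces gives (ii). For the reverse (ii) $\Rightarrow$ (i), a sum of lexsegment ideals (in the same variables) is again lexsegment, since the defining closure property is preserved under sums; so it suffices to know each $I_{d,a}$ is lexsegment, which is immediate from its definition as the degree-$d$ lexsegment extended.

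The main obstacle I anticipate is purely bookkeeping: matching up the two index sequences $\{a_i\}$ in (ii) and $\{b_j\}$ in (iii) and checking that ``consecutive generators'' in the minimal generating set really do produce strictly increasing sequences with no gaps that would break lexsegment-ness — in particular verifying that after removing redundant (non-minimal) monomials one still has exactly one new minimal generator per degree in the relevant range, and that the endpoint conditions ($b_1>0$, $a_1\ge 0$, the top degree) are handled correctly. None of this is deep, but it requires care with off-by-one indexing. Everything else reduces to the single structural observation that lexsegment-ness of a monomial ideal in two variables is equivalent to a local condition on how the $y$-exponent grows as the $x$-exponent decreases, which both (ii) and (iii) encode.
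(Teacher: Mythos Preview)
Your overall strategy is sound and close to the paper's (which runs the cycle (i)$\Rightarrow$(ii)$\Rightarrow$(iii)$\Rightarrow$(i) rather than two separate equivalences), but two steps are wrong as written. In (i)$\Rightarrow$(iii), your claim that ``the only minimal generator in degree $d$ is $x^{d-s}y^s$'' is false: since $d=o(I)$, \emph{every} monomial in $I_d$ is a minimal generator, so all of $x^d,x^{d-1}y,\ldots,x^{d-k}y^k$ lie in $G(I)$; in the description (iii) they simply become the initial stretch $b_1=1,\ldots,b_k=k$ of the sequence.

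More seriously, your verification of (iii)$\Rightarrow$(i) does not go through. You pick ``the largest $j'\le j$ with $d-j'\le p'$'', but since $d-j\le p<p'$ this forces $j'=j$, and from $q'<q$ and $q\ge b_j$ you certainly cannot conclude $q'\ge b_{j'}=b_j$. The fix is to take the \emph{smallest} such $j'$, namely $j'=d-p'$ (assuming $p'\le d$; if $p'\ge d$ then $x^d$ divides $x^{p'}y^{q'}$ and you are done). Then $p'-p\le (d-j')-(d-j)=j-j'$, so $q'=q-(p'-p)\ge b_j-(j-j')$, and what you actually need is $b_j-b_{j'}\ge j-j'$. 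This holds precisely because the $b_i$ are strictly increasing \emph{integers}; that single inequality is the heart of the implication, and it is exactly what the paper uses (phrased there as $b_i-j\ge b_{i-j}$, after reducing to $u\in G(I)$). Your remaining pieces, in particular (ii)$\Rightarrow$(i) via ``a sum of lexsegment ideals is lexsegment'', are fine.
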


\begin{proof}
(i)\implies (ii) follows from the fact that each $I_{d+i-1,a_{i}}$ is a lexsegment ideal, that $(x,y)I_{d+i-1,a_{i}}\subset I_{d+i,a_{i+1}}$ and that
\begin{eqnarray}\label{lex}
(x,y)^kI_{d,a}=I_{d+k,a+k} \quad \text{for all $k$}.
\end{eqnarray}

(ii)\implies (iii): It follows from (\ref{lex}) that $I=(x^a,x^{a-1}y^{b_1},\ldots, x^{a-s}y^{b_s})$ where the $b$-sequence is the set of integers
$$[0,a_1]\cup [a_1+2,a_2]\cup [a_2+2,a_3]\cup\ldots\cup [a_{r-1}+2,a_r],$$
in the natural order with suitable $a_i$. Here $[a,b]=\emptyset$ if $b<a$ and $[a,b]=\{i\in \NN \:\; a\leq i\leq b\}$.

(iii)\implies (i): It is enough to show that for all $u\in G(I)$ and all monomials $v$ with $deg(v)=deg(u)$ and $v>u$, we have $v\in I$. So let
$u=x^{d-i}y^{b_i}$ and for some $v=x^{d-(i-j)}y^{b_i-j}$ for some integer $j> 0$. Since $b_i-j\geq b_{i-j}$, we have
$$ v= x^{d-(i-j)}y^{b_i-j}=y^{(b_i-j)-b_{i-j}}(x^{d-(i-j)}y^{b_{i-j}})\in I,$$
as desired.
\end{proof}

In the following we consider a class of ideals which is more general than lexsegment ideals.
\begin{Theorem}\label{back}
Let $a$ be a positive integer and $I_1,\ldots,I_r\subset S=k[x,y]$ be monomial ideals with $G(I_j)=\{x^{ia}y^{b_{i,j}}\}_{j=s_j,\ldots,t_j}$, where $0\leq
s_j\leq t_j$. Then there exists integers $c_i$ such that

$$G(I_1\cdots I_r)=\{x^{ia}y^{c_i}\}_{i=s,\ldots,t},$$
where $s=s_1+\ldots+s_r$ and $t=t_1+\ldots+t_r$.
\end{Theorem}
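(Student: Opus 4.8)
The plan is to treat the product $I_1\cdots I_r$ directly, without induction, using the fact that all the relevant $x$-exponents lie on the progression $a\ZZ$. Writing $G(I_j)=\{x^{ia}y^{b_{i,j}}\}_{i=s_j}^{t_j}$, minimality of this generating set in the two-variable ring forces $b_{s_j,j}>b_{s_j+1,j}>\cdots>b_{t_j,j}$ for each $j$. Now $I_1\cdots I_r$ is generated by the products $\prod_{j=1}^r x^{i_j a}y^{b_{i_j,j}}=x^{(\sum_j i_j)a}\,y^{\sum_j b_{i_j,j}}$, where $s_j\le i_j\le t_j$ for all $j$. For each integer $i$ with $s\le i\le t$ (here $s=\sum_j s_j$, $t=\sum_j t_j$) there is at least one admissible tuple $(i_1,\dots,i_r)$ with $\sum_j i_j=i$, so I may set $c_i=\min\{\sum_j b_{i_j,j}:\ \sum_j i_j=i,\ s_j\le i_j\le t_j\}$ and let $w_i=x^{ia}y^{c_i}$ be a product realizing this minimum.

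The first key step is that the sequence $c_s,c_{s+1},\dots,c_t$ is strictly decreasing. Given a minimizing tuple $(i_1,\dots,i_r)$ for some $i<t$, the equality $\sum_j i_j=i<\sum_j t_j$ forces $i_{j_0}<t_{j_0}$ for some $j_0$; raising $i_{j_0}$ by one yields an admissible tuple for $i+1$ whose total $y$-exponent drops by $b_{i_{j_0},j_0}-b_{i_{j_0}+1,j_0}>0$, so $c_{i+1}<c_i$.

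The second key step is to identify $G(I_1\cdots I_r)$ with $\{w_i:\ s\le i\le t\}$. Every minimal generator of a product of monomial ideals is a product of minimal generators of the factors, so each element of $G(I_1\cdots I_r)$ has $x$-exponent $ia$ for some $i\in[s,t]$; moreover, minimal generators of a monomial ideal in $K[x,y]$ have pairwise distinct $x$-exponents. It therefore suffices to verify that each $w_i$ is a minimal generator, i.e.\ that neither $x^{ia-1}y^{c_i}$ nor $x^{ia}y^{c_i-1}$ lies in $I_1\cdots I_r$ (a vacuous condition when $ia=0$, resp.\ $c_i=0$, which happens only at an extreme index). Suppose a generating product $x^{ma}y^n$, with $m=\sum_j i_j\in[s,t]$ and $n=\sum_j b_{i_j,j}\ge c_m$, divides one of these two monomials. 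Then $m\le i$; in the first case $ma\le ia-1$ together with $a>0$ forces $m<i$. If $m<i$, Step~1 gives $n\ge c_m>c_i$; if $m=i$, then $n\ge c_i$ by definition of $c_i$. Both conclusions contradict the divisibility requirement $n\le c_i$ (resp.\ $n\le c_i-1$). Hence $w_i\in G(I_1\cdots I_r)$ for every $i\in[s,t]$, and the two displayed constraints on $x$-exponents force $G(I_1\cdots I_r)=\{w_i:\ s\le i\le t\}=\{x^{ia}y^{c_i}\}_{i=s}^{t}$, as claimed.

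The step I expect to be the main obstacle is the second one, specifically making precise that a generating product of $I_1\cdots I_r$ which properly divides $w_i$ must have $x$-exponent at most $(i-1)a$; this is exactly where the hypothesis that all generators of the $I_j$ carry $x$-exponents in the single progression $a\ZZ$ is used, and it is what allows the strict monotonicity of Step~1 to close every case. The boundary indices $i\in\{s,t\}$, at which $ia$ or $c_i$ may vanish, cause no trouble, since there the relevant divisibility test is empty.
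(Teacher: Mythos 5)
Your proposal is correct and follows essentially the same route as the paper: define $c_i$ as the minimum of $\sum_j b_{i_j,j}$ over tuples with $\sum_j i_j=i$, prove $c_s>c_{s+1}>\cdots>c_t$ by incrementing one coordinate of a minimizing tuple, and conclude that $\{x^{ia}y^{c_i}\}_{i=s}^{t}$ is the minimal generating set. The only difference is that you spell out the divisibility verification that the paper treats as the evident equivalence between strict monotonicity of the $c_i$ and minimality, which is a harmless (indeed welcome) elaboration.
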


\begin{proof}
First note that $b_{i,j}>b_{i+1,j}$ for all $i$ and $j$. It is clear that
$$c_i=\min\{b_{i_1,1}+\ldots+b_{i_r,r}\;\: i_1+\ldots+i_r=i\}.$$
 This set is a minimal set of generators of $I_1\cdots I_r$ if and only if $c_i>c_{i+1}$, for $i=s,\dots, t-1$. The case $s=t_1+\cdots +t_r$ is clear. Now
 suppose that $0<s<t_1+\cdots +t_r$. By the definition of $c_s$ there exist $i_1,\cdots,i_r$ such that $s=i_1+\cdots + i_r$ and $c_s=b_{i_1}+\cdots +b_{i_r}$,
 where $0<i_j<t_j$ for some $j=1,\cdots, r$. So $s+1=i_1+\cdots +(i_j+1)+\cdots + i_r$. Therefore
$$c_s=b_{i_1}+\cdots + b_{i_j}+\cdots +b_{i_r}>b_{i_1}+\cdots + b_{{i_j}+1}+\cdots +b_{i_r}=c_{s+1}.$$
Hence $c_s>c_{s+1}$, as desired.
\end{proof}

\begin{Corollary}\label{brexit}
With the notation and the assumptions of the Theorem~\ref{back}, we have
$$\mu(I_1\ldots I_r)=\sum_{j=1}^r\mu(I_j)-(r-1).$$
In particular, if $I$ is a monomial ideal with $G(I)=\{x^{ia}y^{b_i}\}_{i=s,\ldots,t}$, then $\mu(I^k)=k(t-s)+1$.
\end{Corollary}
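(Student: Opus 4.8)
The plan is to read off the count of minimal generators directly from Theorem~\ref{back}. That theorem already tells us that $G(I_1\cdots I_r)=\{x^{ia}y^{c_i}\}_{i=s,\ldots,t}$ for suitable integers $c_i$, with $s=s_1+\cdots+s_r$ and $t=t_1+\cdots+t_r$. Since these generators have pairwise distinct $x$-exponents (the exponent of $x$ is $ia$ with $i$ ranging over $s,s+1,\ldots,t$), there are exactly $t-s+1$ of them, so $\mu(I_1\cdots I_r)=t-s+1$.

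Next I would compare this with $\sum_{j=1}^r\mu(I_j)$. For each $j$, the set $G(I_j)=\{x^{ia}y^{b_{i,j}}\}_{i=s_j,\ldots,t_j}$ is by hypothesis the minimal monomial generating set of $I_j$, and its elements again have pairwise distinct $x$-exponents, so $\mu(I_j)=t_j-s_j+1$. Summing over $j$ gives
\[
\sum_{j=1}^r\mu(I_j)=\sum_{j=1}^r(t_j-s_j)+r=(t-s)+r,
\]
and therefore $\sum_{j=1}^r\mu(I_j)-(r-1)=(t-s)+1=\mu(I_1\cdots I_r)$, which is the asserted equality.

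For the final assertion I would specialize to $I_1=\cdots=I_k=I$ with $G(I)=\{x^{ia}y^{b_i}\}_{i=s,\ldots,t}$. This choice satisfies the hypotheses of Theorem~\ref{back} (each factor is of the required form), so the formula just proved yields $\mu(I^k)=k\mu(I)-(k-1)=k(t-s+1)-(k-1)=k(t-s)+1$.

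Every step here is an elementary count, so there is no real obstacle; the only point to verify — and it is immediate — is that Theorem~\ref{back} applies verbatim, i.e.\ that the monomials $x^{ia}y^{c_i}$ with $s\le i\le t$ are precisely the minimal generators of $I_1\cdots I_r$ (resp.\ of $I^k$), which is exactly what Theorem~\ref{back} provides.
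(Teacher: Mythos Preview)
Your argument is correct and is exactly what the paper intends: the corollary is stated without proof, as an immediate consequence of Theorem~\ref{back}, and your counting $(t-s+1)=\sum_j(t_j-s_j+1)-(r-1)$ together with the specialization $I_1=\cdots=I_k=I$ is precisely the implicit derivation.
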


As a consequence of Lemma~\ref{Theresa May} and Corollary~\ref{brexit}, we obtain

\begin{Corollary}\label{Crash}
If $I$ is a lexsegment ideal, then $\mu(I^2)=2\mu(I)-1$.
\end{Corollary}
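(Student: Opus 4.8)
The plan is to deduce Corollary~\ref{Crash} directly from the structural description of lexsegment ideals in Lemma~\ref{Theresa May} together with the generator-count formula in Corollary~\ref{brexit}. First I would invoke Lemma~\ref{Theresa May}, specifically the equivalence of (i) and (iii): since $I$ is a lexsegment ideal in $K[x,y]$, there exist integers $0<b_1<\cdots<b_s$ such that
\[
I=(x^d,\,x^{d-1}y^{b_1},\,\ldots,\,x^{d-s}y^{b_s}).
\]
The key observation is that this presentation has exactly the shape required to apply Corollary~\ref{brexit} with $a=1$: writing $x^{d-i}y^{b_i}$ as $x^{(d-i)\cdot 1}y^{b_i}$, the exponents of $x$ run through the consecutive values $d,d-1,\ldots,d-s$, so $I$ has the form $G(I)=\{x^{ia}y^{b_i}\}_{i=s',\ldots,t'}$ with $a=1$, $s'=d-s$ and $t'=d$.

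Next I would apply the ``in particular'' clause of Corollary~\ref{brexit}, which gives $\mu(I^k)=k(t'-s')+1$ for all $k$; here $t'-s'=d-(d-s)=s$, so $\mu(I^k)=ks+1$. In particular $\mu(I)=s+1$ and $\mu(I^2)=2s+1=2(s+1)-1=2\mu(I)-1$, which is the desired equality. Alternatively, and perhaps more cleanly, I would apply the first formula of Corollary~\ref{brexit} with $r=2$ and $I_1=I_2=I$ (after noting that $I$ itself is of the required product-free form, so Theorem~\ref{back} applies to $I\cdot I$), obtaining $\mu(I^2)=\mu(I)+\mu(I)-(2-1)=2\mu(I)-1$ at once.

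There is essentially no obstacle here: the work has all been done in Lemma~\ref{Theresa May} and Theorem~\ref{back}/Corollary~\ref{brexit}. The only point requiring a line of care is the bookkeeping that translates the lexsegment presentation in Lemma~\ref{Theresa May}(iii) into the hypothesis ``$G(I)=\{x^{ia}y^{b_i}\}$ with consecutive exponents $i$'' of Corollary~\ref{brexit} — one must check that the exponent of $x$ decreases by exactly $1$ at each step, i.e.\ that the indices $i$ in $x^{d-i}y^{b_i}$ range over a full interval of integers, which is immediate from the form of the generating set. Once that identification is made, Corollary~\ref{Crash} is a one-line consequence, so the proof is purely a matter of citing the two earlier results and substituting $a=1$.
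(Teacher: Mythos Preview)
Your proposal is correct and follows exactly the route the paper takes: the paper states Corollary~\ref{Crash} simply ``as a consequence of Lemma~\ref{Theresa May} and Corollary~\ref{brexit}'', and your argument supplies precisely the bookkeeping (take $a=1$, identify the interval of $x$-exponents) that this citation leaves implicit.
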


We would like to remark that this result  also follows from Proposition~\ref{huneke}, since lexsegment ideals are contracted, see \cite[Proposition 3.3]{CNJR}.

\medskip
Corollary~\ref{Crash} cannot be extended to lexsegment ideals in more than two variables. Indeed, the lexsegment ideal given  in the example after Corollary~\ref{true} shows this.

\begin{Remark}\label{generators}{\em
Let $I\subset K[x,y]$ be a non-principal monomial ideal,  and let $G(I)=\{u_i\:\; i=1,\ldots, m\}$ be as before. The safe area argument shows that if $m\geq 3$,
then no $u_iu_j$ different from the monomials $u_1^2,u_1u_2,u_{m-1}u_m$ and  $u_m^2$ divides any one of them, and they do not divide each other. Therefore,
$\{u_1^2,u_1u_2,u_{m-1}u_m, u_m^2\} \subseteq G(I^2)$. In particular, $\mu(I^2)\geq 4$ for $m\geq 3$.}
\end{Remark}
\medskip
Let $S$ be a nonempty subset of $T(I)$. We set $\bar{S}=\bigcap_{u_iu_j\in S}S_{ij}$.

\begin{Lemma}\label{bar}
Let $\emptyset\neq S\subseteq T(I)$. If $\bar {S}\neq \emptyset$, then $S$ is not a set of generators of $I^2$.
\end{Lemma}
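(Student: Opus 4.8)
The plan is to unpack the definitions and show that if $\bar S=\bigcap_{u_iu_j\in S}S_{ij}$ is nonempty, then every monomial in $\bar S$ is a minimal generator of $I^2$ that is \emph{not} in the ideal generated by the monomials in $S$. Recall that the defining property of the safe area $S_{ij}$ is that for every $u_ku_l\in S_{ij}$, the monomial $u_ku_l$ does not divide $u_iu_j$ and $u_iu_j$ does not divide $u_ku_l$. So pick any $u_ku_l\in\bar S$. By definition of $\bar S$, for \emph{every} element $u_iu_j\in S$ we have $u_ku_l\in S_{ij}$, and hence $u_iu_j$ does not divide $u_ku_l$.

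Next I would observe that a set of monomials $\{w_1,\dots,w_p\}$ generates $I^2$ only if every minimal generator of $I^2$ is divisible by some $w_\alpha$; equivalently, the monomials in $G(I^2)$ all lie in $(w_1,\dots,w_p)$. Now $u_ku_l\in T(I)$ means $u_ku_l\in I^2$, so some element $w\in G(I^2)$ divides $u_ku_l$; choose such a $w$, say $w=u_{k'}u_{l'}$ with $u_{k'}u_{l'}\mid u_ku_l$ (it is routine that every generator of $I^2$ has this product form, or one can just work with $u_ku_l$ itself if it happens to be a minimal generator). The key point is that since no $u_iu_j\in S$ divides $u_ku_l$, and $u_{k'}u_{l'}\mid u_ku_l$, we certainly have that no $u_iu_j\in S$ divides $u_{k'}u_{l'}$ either — wait, that is the wrong direction. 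Let me reorganize: the clean statement to prove is that the minimal generator of $I^2$ dividing $u_ku_l$ is itself \emph{not} divisible by any element of $S$, which would show $S$ fails to generate. Since $u_iu_j\nmid u_ku_l$ for all $u_iu_j\in S$, a fortiori $u_iu_j$ does not divide any divisor of $u_ku_l$, in particular does not divide the chosen minimal generator $w$ of $I^2$ lying below $u_ku_l$. Hence $w\in G(I^2)$ but $w\notin(S)$, so $S$ does not generate $I^2$.

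A minor gap to address is the existence of a minimal generator of $I^2$ dividing $u_ku_l$: this is immediate because $I^2$ is a monomial ideal, so $u_ku_l\in I^2$ forces $u_ku_l$ to be a multiple of some element of the unique minimal monomial generating set $G(I^2)$. The main obstacle, such as it is, is purely bookkeeping: making sure the divisibility chain runs in the correct direction (from $w\mid u_ku_l$ and $u_iu_j\nmid u_ku_l$ one concludes nothing about $w$ unless one uses that nondivisibility of a larger monomial implies nondivisibility after passing to a divisor — which is exactly "if $w\mid u_ku_l$ and $u_iu_j\mid w$ then $u_iu_j\mid u_ku_l$", the contrapositive of what we need). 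So the argument is: suppose for contradiction $S$ generates $I^2$; pick $u_ku_l\in\bar S$ (possible since $\bar S\neq\emptyset$); pick $w\in G(I^2)$ with $w\mid u_ku_l$; since $S$ generates, some $u_iu_j\in S$ divides $w$, hence divides $u_ku_l$, contradicting $u_ku_l\in S_{ij}$. This is short enough that I would simply write it out in full rather than sketch it.
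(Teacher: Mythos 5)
Your final argument is correct and is essentially the paper's own proof: pick $u_ku_\ell\in\bar S$, note it lies in $I^2$ yet is divisible by no element of $S$ (by the defining property of the safe areas), hence $S$ cannot generate $I^2$. The detour through a minimal generator $w\in G(I^2)$ is unnecessary (apply the divisibility criterion for monomial ideal membership directly to $u_ku_\ell$), and your opening claim that elements of $\bar S$ are themselves minimal generators of $I^2$ is neither needed nor true in general, but neither point affects the validity of the concluding contradiction.
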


\begin{proof}
Since $\bar{S}\neq \emptyset$, there exist $k\leq \ell$ such that $u_ku_{\ell}\in \bar{S}$. By the definition of the safe area it follows that no monomial in $S$
can divide $u_ku_{\ell}$. Therefore $S$ can not be a set of generator of $I^2$.
\end{proof}

Even though we don't have  $\mu(I^2)>\mu(I)$ for non-principal ideals, we can show

\begin{Proposition}
\label{small}
Let $I\subset K[x,y]$ be a non-principal monomial ideal with  $\mu(I)\leq 7$. Then $\mu(I^2)>\mu(I)$.
\end{Proposition}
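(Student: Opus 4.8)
The plan is to reduce to the case $\height I = 2$ — write $I = x^{a_m}y^{b_1}J$ with $\height J = 2$ and $\mu(I^k) = \mu(J^k)$ for all $k$, so that $u_1 = x^{a_1}$, $u_m = y^{b_m}$ — and then treat each value of $m := \mu(I)\in\{2,\dots,7\}$ by locating enough forced minimal generators of $I^2$ inside the triangle $T(I)$. A fact used throughout: listing $G(I^2)$ by strictly decreasing $x$-degree as $v_1,\dots,v_p$ ($p = \mu(I^2)$), one has $v_1 = u_1^2$, $v_2 = u_1u_2$, $v_{p-1} = u_{m-1}u_m$, $v_p = u_m^2$; indeed these four monomials lie in $G(I^2)$ by Remark~\ref{generators} (for $m = 2$ one checks directly that $G(I^2) = \{u_1^2,u_1u_2,u_2^2\}$, so $\mu(I^2) = 3$), and comparing the $x$-degrees $a_i+a_j$ of products $u_iu_j$ shows that $u_1^2$ and $u_1u_2$ are the two monomials of $I^2$ of largest $x$-degree (and symmetrically for $u_{m-1}u_m$, $u_m^2$). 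For $m = 3$ these four monomials already lie in the four distinct diagonals $D_2,D_3,D_5,D_6$, so $\mu(I^2)\ge 4 > 3$.

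The first real step is a lemma: the diagonal $D_4 = \{u_1u_3,\,u_2^2\}$ always contains a minimal generator of $I^2$, hence so does $D_{2m-2} = \{u_{m-2}u_m,\,u_{m-1}^2\}$ by the symmetry $x\leftrightarrow y$, $u_k\mapsto u_{m+1-k}$. Using only the monotonicity of the exponent sequences, one checks that the only monomial of $I^2$ that can properly divide $u_1u_3$ is $u_2^2$, and that the only monomials of $I^2$ that can properly divide $u_2^2$ are the $u_1u_\ell$; it follows that if $u_1u_3$ is not a minimal generator then $u_2^2 \mid u_1u_3$ properly, and this in turn forces $u_2^2$ to be a minimal generator — so one of $u_1u_3,u_2^2$ always is. Therefore, when $m\ge 4$ the six monomials $u_1^2\in D_2$, $u_1u_2\in D_3$, the $D_4$-generator, the $D_{2m-2}$-generator, $u_{m-1}u_m\in D_{2m-1}$, $u_m^2\in D_{2m}$ are six distinct elements of $G(I^2)$ (the diagonals are pairwise distinct since $4 < 2m-2$), giving $\mu(I^2)\ge 6$; this settles $m = 4$ and $m = 5$.

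For $m = 6$ I upgrade this to $\mu(I^2)\ge 7$: none of the six generators just listed divides the central monomial $u_3^2 = x^{2a_3}y^{2b_3}$ — each of them has $x$-degree $> 2a_3$ or $y$-degree $> 2b_3$ — so they do not generate $I^2$ and a seventh generator is needed. For $m = 7$ the same observation (with $u_4^2$ in place of $u_3^2$) again gives $\mu(I^2)\ge 7$, and it remains to rule out $\mu(I^2) = 7$. Assume $\mu(I^2) = 7$, so $G(I^2) = \{u_1^2,u_1u_2,w,w',u_6u_7,u_7^2,z\}$ with $w\in D_4$, $w'\in D_{12}$ and $z$ the seventh generator. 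Since none of the first six divides $u_3^2$ or $u_5^2$, the monomial $z$ divides both, hence $z\mid x^{2a_5}y^{2b_3}$; writing $z = u_pu_q$ with $p\le q$ and comparing exponents forces $p\in\{1,2\}$ and $q\in\{6,7\}$. Now one inspects, one by one, the monomials of $T(I)$ that still need to be divided by a generator: $u_2u_3$ being covered forces $p = 1$; $u_5u_6$ being covered forces $q = 7$, so $z = u_1u_7$; $u_1u_4$ and $u_4u_7$ being covered force $w = u_2^2$ and $w' = u_6^2$; $u_1u_6$ and $u_2u_6$ being covered force $2a_2\le a_1+a_6$ and $a_1\le a_2+a_6$, hence $a_2\le 2a_6$; and, since $w' = u_6^2$ while $u_5u_7\notin G(I^2)$, the monomial $u_5u_7$ is divisible by another generator, which by the mirror of the lemma above can only be $u_6^2$, giving $2a_6\le a_5$. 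But then $a_5 < a_2\le 2a_6\le a_5$, a contradiction. Hence $\mu(I^2)\ge 8 > 7 = \mu(I)$.

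The hard part is $m = 7$: the contradiction is obtained only after the interior generator $z$ has been pinned down to $u_1u_7$ and $w,w'$ to $u_2^2,u_6^2$, and then by tracking, monomial by monomial, which generator is forced to cover it. The bookkeeping with the two monotone exponent sequences in this last argument is the genuine work; the cases $m\le 6$ are short once the $D_4$-lemma is in hand.
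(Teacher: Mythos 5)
Your argument is correct, and it lands on the same endgame as the paper: for $\mu(I)=7$ the unique problematic configuration is $\{u_1^2,u_1u_2,u_2^2,u_6^2,u_1u_7,u_6u_7,u_7^2\}$, which is killed by inequalities among the $a_i$ (your $2a_2\le a_1+a_6$, $a_1\le a_2+a_6$, $2a_6\le a_5$ giving $a_2\le 2a_6\le a_5$ is a minor variant of the paper's $2a_2\le a_1+a_6$, $a_1\le a_5+a_6$, $2a_6\le a_5$ giving $a_2\le a_5$). The difference is in how the two proofs get there. The paper argues via the common safe area: by Lemma~\ref{bar}, any candidate generating set $S\subset T(I)$ with $\bar S\neq\emptyset$ is disqualified, and it asserts (leaving the verification to the reader) that for $4\le\mu(I)\le 6$ every subset of size $\mu(I)$ has $\bar S\neq\emptyset$, and that for $\mu(I)=7$ the displayed set is the only one with $\bar S=\emptyset$. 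You instead exhibit explicit forced minimal generators: beyond the four corners of Remark~\ref{generators}, your $D_4$/$D_{2m-2}$ lemma (one of $u_1u_3,u_2^2$, resp.\ one of $u_{m-2}u_m,u_{m-1}^2$, is always minimal) yields six generators for $m\ge 4$, the uncovered central squares $u_3^2$, $u_4^2$, $u_5^2$ force a seventh for $m=6,7$, and for $m=7$ you reconstruct the exceptional set by covering deductions rather than by enumerating all seven-element subsets with empty common safe area. What your route buys is that nothing is left to a case check "the reader can check": the middle cases $m=4,5,6$ and the uniqueness of the bad $m=7$ configuration are derived, not asserted; what the paper's route buys is brevity and a cleaner conceptual packaging (everything is one safe-area statement plus one exceptional set). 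Two small points of wording you should tighten: in the $D_4$-lemma, "the only monomial of $I^2$ that can properly divide $u_1u_3$" should read "the only product $u_ku_\ell$ (hence the only element of $G(I^2)$)", since arbitrary monomials of $I^2$ dividing $u_1u_3$ need not be $u_2^2$; and distinctness of your six generators does not follow merely from the diagonals $D_2,D_3,D_4,D_{2m-2},D_{2m-1},D_{2m}$ being distinct indices (monomials in different diagonals can coincide in general), though a one-line comparison of $x$- and $y$-degrees, which you essentially already use, settles it here.
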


\begin{proof}
As before we may assume that $G(I)=\{u_i\:\; i=1,\ldots, m\}$ where $u_i=x^{a_i}y^{b_i}$, $a_1>a_2>\ldots >a_{m-1}>a_m=0$ and $0=b_1<b_2<\ldots<b_m$.
If $\mu(I)=2$, then $\mu(I^2)=3$, and if $\mu(I)=3$ the assertion holds by the Remark~\ref{generators}. Now let $4 \leq \mu(I)\leq 6$, and choose $S\subset
T(I)$
with $|S|=\mu(I)$. The reader can check that $\bar{S}\neq \emptyset$. Therefore, Lemma~\ref{bar} implies that $S$ is not a system of generators of $I^2$. Hence
$\mu(I^2)>\mu(I)$ for $\mu(I)=4,5$ or $6$.

Finally,  let $\mu(I)=7$. Then the only set $S$ with $|S|=7$ and $\bar{S}=\emptyset$ is the set $$S=\{u_1^2,u_1u_2,u_2^2,u_6^2,u_1u_7,u_{6}u_7,u_7^2\}.$$ But
again this can not be a set of generators of $I^2$ because otherwise, we have the following relations,
$$2a_2\leq a_1+a_6,\quad 2a_6\leq a_5\quad and \quad a_1\leq a_5+a_6,$$
which implies that $a_2\leq a_5$, a contradiction.
\end{proof}

\section{On the Cohen-Macaulay type of a product of monomial ideals}

A result of Huneke \cite{Hu} says that if $I$ is a graded Gorenstein ideal in the polynomial ring $S=K[x_1,\ldots,x_n]$ with $\height(I)\geq 2$, then $I$ is not
a product of two proper ideals. This theorem implies that if $n\geq 2$ and $\height I\geq 2$, then the Cohen-Macaulay type of $S/I^2$ is greater than or equal to
$2$. Recall that the Cohen-Macaulay type $\type(M)$ of a graded Cohen-Macaulay $S$-module $M$, is by definition the $K$-vector space dimension of
$\Ext_{S}^d(K,M)$. It is known that  if $$0\to F_p\to \cdots \to F_0\to M\to 0$$ is the minimal graded free $S$-resolution of $M$, then $\type(M)=\rank(F_p)$.

In the case that $M=S/I$ with $\height(I)=n$,  $\type(S/I)$ is the smallest number $r$ such that $I=Q_1\cap Q_2\cap \ldots\cap Q_r$, where each $Q_i$ is an
irreducible ideal. In the case that $I$ is a monomial ideal, irreducible ideals in this intersection are all of the form
$(x_1^{a_1},x_2^{a_2},\ldots,x_n^{a_n})$.

\medskip
In the following very special case we can improve Huneke's lower  bound for the Cohen--Macaulay type as follows.

\begin{Theorem}\label{type}
Let $K$ be a field, $S=K[x_1,\ldots,x_n]$ the polynomial ring with $n\geq 2$ variables and let $I, J$ be monomials ideal of height $n$. Then $\type (S/IJ)\geq 3$
if either $n\geq 3$, or else $n=2$ and $\max\{\mu(I),\mu(J)\}\geq 3$.
\end{Theorem}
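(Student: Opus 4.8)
The plan is to prove the contrapositive: if $\type(S/IJ)\le 2$, then $n=2$ and $\mu(I)=\mu(J)=2$ (which, since $n\ge 2$ and height‑$n$ ideals in $\ge 2$ variables are never principal, is precisely the negation of the theorem's hypothesis). Because $\height IJ=n$, the ring $S/IJ$ is Artinian and $\type(S/IJ)$ is the least $r$ with $IJ=Q_1\cap\dots\cap Q_r$ for irreducible monomial ideals, each of the form $(x_1^{c_1},\dots,x_n^{c_n})$ with all $c_i\ge 1$. So assume $IJ=Q_1\cap Q_2$ with $Q_1=(x_1^{a_1},\dots,x_n^{a_n})$ and $Q_2=(x_1^{b_1},\dots,x_n^{b_n})$ (possibly $Q_1=Q_2$). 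The single tool I will use repeatedly: a monomial $x_k^e$ can lie in $IJ$ only as a multiple of $uv$ with $u\in G(I)$, $v\in G(J)$, and then $u,v$ are forced to be pure powers of $x_k$. Hence, writing $\nu_k(L)$ for the exponent of the minimal pure $x_k$‑power of a monomial ideal $L$ (so $x_k^{\nu_k(L)}\in G(L)$), one gets $\nu_k(IJ)=\nu_k(I)+\nu_k(J)$, with $\nu_k(I),\nu_k(J)\ge 1$ as $I,J$ are proper. Since $x_k^e\in Q_1\cap Q_2$ iff $e\ge\max(a_k,b_k)$, this yields $\nu_k(I)+\nu_k(J)=\max(a_k,b_k)$, hence the strict bounds $\nu_k(I)<\max(a_k,b_k)$ and $\nu_k(J)<\max(a_k,b_k)$ for every $k$.

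Put $A=\{k:a_k<b_k\}$, $B=\{k:a_k>b_k\}$, $C=\{k:a_k=b_k\}$. If $IJ$ is irreducible (so $A=B=\emptyset$), already $x_i^{\nu_i(I)}x_j^{\nu_j(J)}\in IJ$ for any $i\ne j$ violates the strict bounds, a contradiction (this is the relevant special case of Huneke's theorem, reproved en passant); so $A\ne\emptyset\ne B$. Feeding $x_i^{\nu_i(I)}x_j^{\nu_j(J)}$ and $x_i^{\nu_i(J)}x_j^{\nu_j(I)}$ (for $i\in A$, $j\in B$) into $Q_1\cap Q_2$ and discarding one disjunct each time by the strict bounds, I obtain $\nu_i(I),\nu_i(J)\ge a_i$ for all $i\in A$ and $\nu_j(I),\nu_j(J)\ge b_j$ for all $j\in B$. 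When $n\ge 3$, since $A,B$ are nonempty, at least one of $|A|\ge 2$, $|B|\ge 2$, $|C|\ge 1$ holds; feeding, respectively, $x_i^{\nu_i(I)}x_{i'}^{\nu_{i'}(J)}$ ($i,i'\in A$), $x_j^{\nu_j(I)}x_{j'}^{\nu_{j'}(J)}$ ($j,j'\in B$), or $x_k^{\nu_k(I)}x_j^{\nu_j(J)}$ ($k\in C$, $j\in B$) into $Q_1\cap Q_2$ then clashes with the bounds just obtained. This settles the case $n\ge 3$.

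For $n=2$ one has $A=\{1\}$, $B=\{2\}$ after relabeling, and a short direct computation of the intersection of two monomial complete intersections in $K[x,y]$ gives $IJ=(x^{P},x^{\alpha}y^{\beta},y^{Q})$ with $1\le\alpha<P$ and $1\le\beta<Q$. Write $G(I)=\{x^{e_1},x^{e_2}y^{f_2},\dots,y^{f_m}\}$ with $e_1>\dots>e_m=0$, $0=f_1<\dots<f_m$, and $G(J)$ analogously with exponents $g_\bullet,h_\bullet$; here $e_1+g_1=P$ and $f_m+h_p=Q$. Assume $m=\mu(I)\ge 3$. Then $x^{e_2}y^{f_2}\cdot x^{g_1}\in IJ$ is a multiple of neither $x^{P}$ (as $e_2<e_1$) nor $y^{Q}$ (as $f_2<f_m<Q$), hence a multiple of $x^{\alpha}y^{\beta}$, forcing $f_2\ge\beta$; symmetrically $x^{e_{m-1}}y^{f_{m-1}}\cdot y^{h_p}\in IJ$ forces $e_{m-1}\ge\alpha$. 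Finally $x^{\alpha}y^{\beta}$, being a minimal generator of $IJ$, equals $wz$ for some $w\in G(I)$, $z\in G(J)$; splitting according to whether $w=x^{e_1}$, $w=y^{f_m}$, or $w$ is a middle generator of $I$, and using $f_2\ge\beta$, $e_{m-1}\ge\alpha$, the strict monotonicity of the exponent sequences, and the properness of $J$, produces a contradiction in each case. Hence $\mu(I)\le 2$, and by symmetry $\mu(J)\le 2$; with $\height IJ=2$ this gives $\mu(I)=\mu(J)=2$, completing the contrapositive.

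I expect the $n=2$ step to be the main obstacle: the $n\ge 3$ argument is essentially a pigeonhole on $A$, $B$, $C$, whereas for $n=2$ neither $A$ nor $B$ has any slack, so one must extract the contradiction from the hypothesis $\mu\ge 3$ via careful bookkeeping with the strictly increasing and decreasing staircase sequences of $G(I)$ — and one must be careful to verify that $x^{\alpha}y^{\beta}$ really is a minimal generator of $IJ$ and that the pure‑power generators $x^{e_1},y^{f_m}$ of $I$ are identified correctly. That the hypothesis on $\mu$ is genuinely needed when $n=2$ is visible from $I=J=(x^2,y^2)$, where $IJ=(x^4,x^2y^2,y^4)$ and $\type(S/IJ)=2$.
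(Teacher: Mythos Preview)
Your proof is correct, but it follows a genuinely different route from the paper's. The paper argues by comparing two estimates for $\mu(IJ)$: on one hand, Proposition~\ref{rough} gives $\mu(IJ)\ge n+\binom{n}{2}$ (proved by projecting to each pair of variables and invoking Lemma~\ref{baby}); on the other hand, an explicit computation shows that an intersection $Q_1\cap Q_2$ of two irreducible monomial ideals has $\mu=n+r(s-r)\le n+n^2/4$, and $n^2/4<\binom{n}{2}$ once $n\ge 3$. For $n=2$ the paper simply uses the Hilbert--Burch resolution to read off $\type(S/IJ)=\mu(IJ)-1$ and then applies the sharper bound $\mu(IJ)\ge 4$ from Lemma~\ref{baby}. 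Your argument instead chases specific monomials $x_i^{\nu_i(I)}x_j^{\nu_j(J)}$ through the membership conditions for $Q_1$ and $Q_2$, extracting inequalities on the $\nu_k$'s and reaching a contradiction by a pigeonhole on the index sets $A,B,C$; for $n=2$ you do a direct staircase analysis of $G(I)$. Your approach is entirely self-contained and avoids the auxiliary counting lemmas, at the cost of a longer $n=2$ step; the paper's approach isolates the reusable inequality $\mu(IJ)\ge n+\binom{n}{2}$ and makes the $n=2$ case a one-liner via $\type=\mu-1$.
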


Before proving this theorem, we give a rough lower bound of the number of generators of $IJ$. We first show

\begin{Lemma}\label{baby}
Let $I,J\subset S=K[x,y]$ be monomial ideals of height $2$. Then $\mu(IJ)\geq 3$. Moreover, if $\max\{\mu(I),\mu(J)\}\geq 3$, then $\mu(IJ)\geq 4$.
\end{Lemma}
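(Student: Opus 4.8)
The plan is to exploit the structure of monomial ideals of height $2$ in $K[x,y]$: each such ideal contains a pure power of $x$ and a pure power of $y$ (up to a common monomial factor). Write $I=x^{a}y^{b}I_0$ and $J=x^{c}y^{d}J_0$ where $I_0,J_0$ are monomial ideals of height $2$ containing pure powers $x^{p},y^{q}$ and $x^{p'},y^{q'}$ respectively; since $\mu(IJ)=\mu(I_0J_0)$ and $\mu(I)=\mu(I_0)$, $\mu(J)=\mu(J_0)$, I may replace $I,J$ by $I_0,J_0$ and so assume from the start that $I$ contains $x^{p},y^{q}$ and $J$ contains $x^{p'},y^{q'}$ for positive integers $p,q,p',q'$. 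Then $IJ$ contains the three distinct monomials $x^{p+p'}$, $y^{q+q'}$, and (using the minimal generators of least $x$-degree in $I$ that carry some $y$-power, or any generator of $I$ properly between the two pure powers) a ``mixed'' monomial that is divisible by both $x$ and $y$ and divides neither pure power. More carefully: pick a minimal generator $u=x^{e}y^{f}$ of $I$ with $0<e<p$ and $0<f<q$ if $\mu(I)\geq 3$; if $\mu(I)=2$ then $I=(x^{p},y^{q})$ and I instead use the generator $x^{p'}$ of $J$ together with $y^{q}\in I$ to produce $x^{p'}y^{q}$, which is divisible by $y$, divides neither $x^{p+p'}$ nor $y^{q+q'}$, and is therefore a third minimal generator. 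This gives $\mu(IJ)\geq 3$ in all cases.

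For the second assertion, assume $\max\{\mu(I),\mu(J)\}\geq 3$, say $\mu(I)\geq 3$, so after the reduction $I=(x^{p},\ldots,y^{q})$ has a generator $u=x^{e}y^{f}$ with $0<e<p$, $0<f<q$. Now $IJ$ contains the four monomials $x^{p}\cdot x^{p'}=x^{p+p'}$, $y^{q}\cdot y^{q'}=y^{q+q'}$, $u\cdot x^{p'}=x^{e+p'}y^{f}$, and $u\cdot y^{q'}=x^{e}y^{f+q'}$. I claim these four are pairwise incomparable under divisibility. The two pure powers are obviously incomparable with each other and with the two mixed monomials (a pure power of $x$ cannot divide, nor be divided by, a monomial involving $y$, and symmetrically). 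For the two mixed monomials: $x^{e+p'}y^{f}$ versus $x^{e}y^{f+q'}$ — the first has strictly larger $x$-exponent ($e+p'>e$) and strictly smaller $y$-exponent ($f<f+q'$), so neither divides the other. Hence all four lie in $G(IJ)$ and $\mu(IJ)\geq 4$.

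The only delicate point — and the one I would be most careful about — is the case $\mu(I)=\mu(J)=2$ for the first assertion, since then neither ideal supplies an honest ``interior'' generator and one must manufacture the third monomial of $G(IJ)$ by crossing a pure power of one ideal with a pure power of the other; one must check that the resulting monomial genuinely fails to divide and fails to be divided by both pure powers of $IJ$, which forces using, e.g., $x^{p}y^{q'}$ (from $x^{p}\in I$, $y^{q'}\in J$) and verifying $0<p<p+p'$ and $0<q'<q+q'$. This is immediate from positivity of $p,q',p'$ but should be stated explicitly. Everything else is routine bookkeeping with exponent vectors, and no appeal to the fiber ring or to Theorem~\ref{shalom} is needed; indeed this lemma is meant to feed into the proof of Theorem~\ref{type}.
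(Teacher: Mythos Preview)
Your argument for the first assertion is essentially the same as the paper's: exhibit the two pure powers $x^{p+p'},y^{q+q'}\in G(IJ)$ and then the cross product $x^{p'}y^{q}$ (or $x^{p}y^{q'}$) which lies in $IJ$ but not in $(x^{p+p'},y^{q+q'})$. One imprecision: you assert that this cross product ``is therefore a third minimal generator'', but that need not be literally true. For instance with $I=(x,y)$ and $J=(x^3,xy,y^3)$ one has $p'=3,q=1$, and $x^{3}y$ is divisible by $x^{2}y\in G(IJ)$. What you have actually shown is that $IJ\supsetneq(x^{p+p'},y^{q+q'})$, which is enough for $\mu(IJ)\geq 3$; just phrase it that way. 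Also, the case split on $\mu(I)$ in your first paragraph is unnecessary, since the cross-product argument works uniformly.

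Your argument for the second assertion has a genuine gap. You produce four monomials $m_1=x^{p+p'}$, $m_2=y^{q+q'}$, $m_3=u\,x^{p'}$, $m_4=u\,y^{q'}$ in $IJ$ that are pairwise incomparable under divisibility, and then conclude ``Hence all four lie in $G(IJ)$''. That inference is false: pairwise incomparability of elements of $IJ$ does not force each of them to be a minimal generator. Concretely, take $I=(x^{3},x^{2}y,y^{2})$, $J=(x,y)$, and $u=x^{2}y$; then $m_4=u\cdot y=x^{2}y^{2}$ is divisible by $xy^{2}\in G(IJ)$, so $m_4\notin G(IJ)$.

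The fix is short and keeps your setup intact. You already know $m_1,m_2\in G(IJ)$. Suppose for contradiction that $\mu(IJ)=3$, so $G(IJ)=\{m_1,m_2,g\}$ for some monomial $g$. Since neither $m_1$ nor $m_2$ divides $m_3$ or $m_4$, the generator $g$ must divide both $m_3$ and $m_4$, hence $g$ divides $\gcd(m_3,m_4)=x^{e}y^{f}=u$. Thus $u\in IJ$. But $J\subseteq(x,y)$, so $IJ\subseteq (x,y)I$, and $u\in(x,y)I$ contradicts $u\in G(I)$. Therefore $\mu(IJ)\geq 4$.

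The paper's proof of the second part takes a slightly different route: it assumes $G(IJ)=\{u_1v_1,u_mv_n,u_iv_j\}$ (using that every minimal generator of $IJ$ is a product of minimal generators of $I$ and $J$) and then locates a further product $u_iv_k$ not divisible by any of the three. Your construction is more explicit and, once patched as above, arguably cleaner.
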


\begin{proof}
Let $G(I)=\{x^{a_1},y^{a_2},\ldots\}$ and $G(J)=\{x^{b_1},y^{b_2},\ldots\}$. Then,
 $$IJ=(x^{a_1+b_1},y^{a_2+b_2},x^{a_1}y^{b_2},x^{b_1}y^{a_2},\dots).$$
Now suppose that $\mu(IJ)=2$. So $G(IJ)=\{x^{a_1+b_1},y^{a_2+b_2}\}$, which is impossible because $x^{a_1}y^{b_2}\notin (x^{a_1+b_1},y^{a_2+b_2})$.

Next suppose that $\max\{\mu(I),\mu(J)\}\geq 3$. We may assume that $\mu(I)\geq 3$. Further we may assume that $G(I)=\{u_1,\ldots,u_m\}$, $m\geq 3$, with
$u_1=x^{a_1}, u_m=y^{a_2}$ and $G(J)=\{v_1,\ldots,v_n\}$, $n\geq 2$, with $v_1=x^{b_1}, v_n=y^{b_2}$.

Now assume that $|G(IJ)|<4$. So $|G(IJ)|=3$ by the first part of the proof. Then, $G(IJ)=\{u_1v_1,u_mv_n, u_iv_j\}$, where $u_iv_j\neq u_1v_1,u_mv_n$. Since
$m\geq 3$, there exists $k\neq j$ such that $u_iv_k\neq u_1v_1, u_mv_n$. This is clear that $u_iv_j\nmid u_iv_k$. Also $u_1v_1, u_mv_n$ do not divide $u_iv_k$,
because $u_iv_k$ is not a pure power. This shows that $G(IJ)\neq\{u_1v_1,u_mv_n, u_iv_j\}$, contradiction.
\end{proof}

\begin{Proposition}\label{rough}
Let $I,J\subset S$ be as in Theorem~\ref{type}. Then, $\mu(IJ)\geq n+{n\choose 2}$.
\end{Proposition}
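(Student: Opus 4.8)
The plan is to produce $n+\binom{n}{2}$ distinct elements of $G(IJ)$, separated by their supports: for each variable $x_k$ one generator supported exactly on $\{k\}$, and for each pair $\{i,j\}$ with $i\neq j$ one generator supported exactly on $\{i,j\}$. A monomial with singleton support can never equal one with doubleton support, and distinct (singleton, resp.\ doubleton) supports force distinct monomials, so exhibiting these $n+\binom{n}{2}$ generators gives $\mu(IJ)=|G(IJ)|\geq n+\binom{n}{2}$. Throughout let $a_i$ be the least exponent $e$ with $x_i^e\in I$ and $b_i$ the least $e$ with $x_i^e\in J$; since $I$ and $J$ are proper $\mm$-primary ideals, all $a_i,b_i\geq 1$.

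First I would record the pure-power generators. If $x_k^c\in IJ$, write $x_k^c=uv$ with $u\in I$, $v\in J$ monomials; then $u$ and $v$ are powers of $x_k$, whence $c\geq a_k+b_k$, while conversely $x_k^{a_k}x_k^{b_k}\in IJ$. Thus $a_k+b_k$ is the least power of $x_k$ lying in $IJ$, no proper divisor of $x_k^{a_k+b_k}$ lies in $IJ$, and therefore $x_k^{a_k+b_k}\in G(IJ)$. This yields $n$ generators with pairwise distinct singleton supports.

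Next, fix $i\neq j$ and let $\mathcal D$ be the (finite, nonempty) set of monomials of $IJ$ of the form $x_i^sx_j^t$ with $s,t\geq 1$ that divide $x_i^{a_i}x_j^{b_j}$; it is nonempty because $x_i^{a_i}x_j^{b_j}=x_i^{a_i}\cdot x_j^{b_j}\in IJ$ and $a_i,b_j\geq 1$. Choose a divisibility-minimal $m_0=x_i^{s_0}x_j^{t_0}\in\mathcal D$. I claim $m_0\in G(IJ)$. If not, some $g\in G(IJ)$ properly divides $m_0$; since $g$ also divides $x_i^{a_i}x_j^{b_j}$ we may write $g=x_i^{s'}x_j^{t'}$ with $s'\leq s_0\leq a_i$, $t'\leq t_0\leq b_j$ and $(s',t')\neq(0,0)$. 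If $s',t'\geq 1$, then $g\in\mathcal D$ properly divides $m_0$, contradicting minimality; if $s'=0$, then $g=x_j^{t'}\in IJ$ forces $t'\geq a_j+b_j$, contradicting $t'\leq b_j<a_j+b_j$; the case $t'=0$ is symmetric. Hence $m_0\in G(IJ)$ and has support exactly $\{i,j\}$. Ranging over all $\binom{n}{2}$ pairs completes the list and the count.

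I do not expect a serious obstacle; the argument is bookkeeping on supports and divisibility. The one delicate point, and the reason one cannot simply take an arbitrary divisibility-minimal ``honestly mixed'' monomial of $IJ$, is that such a monomial could be properly divisible by a pure power $x_j^{a_j+b_j}$ or $x_i^{a_i+b_i}$; restricting $\mathcal D$ to divisors of $x_i^{a_i}x_j^{b_j}$ is exactly what excludes this, via the inequality $t_0\leq b_j<a_j+b_j$ and its mirror image.
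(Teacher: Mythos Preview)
Your argument is correct and follows essentially the same strategy as the paper: produce $n$ minimal generators with singleton support (the pure powers $x_k^{a_k+b_k}$) and one minimal generator with support exactly $\{i,j\}$ for each pair $i\neq j$, then count. The paper packages the second step differently, passing through the projection $\phi_{ij}\colon S\to K[x_i,x_j]$ that kills the other variables and invoking Lemma~\ref{baby} to obtain a third generator $u_{ij}$ of $\phi_{ij}(I)\phi_{ij}(J)$, together with the observation $G(\phi_{ij}(IJ))\subset G(IJ)$. Your divisibility-minimal choice inside $\mathcal D\subset\{x_i^sx_j^t: s,t\geq 1\}$ is precisely an unrolled, self-contained version of that same step; the restriction to divisors of $x_i^{a_i}x_j^{b_j}$ plays the role of the projection by ruling out pure-power divisors. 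Neither approach is materially shorter or more general than the other.
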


\begin{proof}
Let $i$ and $j$ be integers with the property $1\leq i<j\leq n$, and let $\phi_{ij}: S\to K[x_i,x_j]$ be the $K$-algebra homomorphism with $\phi_{ij}(x_k)=0$ if
$k\neq i,j$ and $\phi_{ij}(x_i)=x_i$ and $\phi_{ij}(x_j)=x_j$. Let $G(I)=\{x_1^{a_1},x_2^{a_2}\ldots, x_n^{a_n}, \ldots\}$ and $G(J)=\{x_1^{b_1},x_2^{b_2}\ldots,
x_n^{b_n}, \ldots\}$. Then $\phi_{ij}(I)=(x_i^{a_i},x_j^{a_j},\ldots)$ and $\phi_{ij}(J)=(x_i^{b_i},x_j^{b_j},\ldots)$. Now we apply Lemma~\ref{baby} and obtain
$$\phi_{ij}(IJ)=\phi_{ij}(I)\phi_{ij}(J)=(x_i^{a_i+b_i},x_j^{a_j+b_j},u_{ij},\ldots),$$
where $u_{ij}\in K[x_i,x_j]$.

Since $G(\phi_{ij}(IJ))\subset G(IJ)$, it follows that
 $$G(IJ)=\{x_1^{a_1+b_1},\ldots , x_n^{a_n+b_n}, \{u_{ij}\}_{1\leq i<j\leq n},\ldots\}.$$
This yields the desired inequality.
\end{proof}

\begin{proof}[Proof of Theorem~\ref{type}]
Suppose $\type(S/IJ)<3$. If $\type(S/IJ)=1$, then $I$ is an irreducible ideal and hence is generated by $n$ elements. This contradicts Proposition~\ref{rough}. Let
us assume that $\type(S/IJ)=2$. Then $IJ=Q_1\cap Q_2$ with $Q_1=(x_1^{c_1},\ldots,x_n^{c_n})$ and $Q_2=(x_1^{d_1},\ldots,x_n^{d_n})$. We may assume that $c_i>d_i$
for $1\leq i\leq r$, $c_i<d_i$ for $r+1\leq i\leq s$ and $c_i=d_i$ for $s+1\leq i\leq n$. It follows that
$$G(IJ)=\{x_1^{c_1},\ldots,x_r^{c_r},x_{r+1}^{d_{r+1}},\ldots,x_n^{d_n}\}\cup\{x_i^{d_i}y_j^{c_j}\;\: i=1,\ldots,r \text{ and } j=r+1,\ldots, s\}.$$

Therefore, $\mu(IJ)=n+r(s-r)$. On the other hand, by Proposition~\ref{rough}, $\mu(IJ)\geq n+{n\choose 2}$. This implies that $r(s-r)\geq {n\choose 2}$. For
given value of $s$ the maximal value of $r(s-r)$ is $s^2/4$, and since $s\leq n$, we have $r(s-r)\leq n^2/4$. Since $n^2/4<{n\choose 2}$ if $n\geq 3$, we get a
contradiction.

In the case that $n=2$ we assume that  $\max\{\mu(I),\mu(J)\}\geq 3$. Then Lemma~\ref{baby}   implies that $\mu(IJ)\geq 4$. Let $m=\mu(IJ)$. Since $n=2$, the
minimal free resolution of $S/IJ$ is given by
$$0\to S^{m-1}\to S^m \to S \to S/IJ \to 0.$$
This implies that $\type(S/IJ)=m-1\geq 3$.
\end{proof}

\newpage

\end{document}